\definecolor{purplecolor}{RGB}{180, 180, 230}      % Light lavender/periwinkle for fills
\definecolor{purpleedge}{RGB}{140, 130, 200}       % Lighter purple for edges
\definecolor{purpledot}{RGB}{80, 60, 140}          % Darker purple for vertices
\definecolor{orangecolor}{RGB}{255, 180, 100}      % Saturated orange
\definecolor{orangeedge}{RGB}{255, 180, 100}       % Lighter orange for edges
\definecolor{orangedot}{RGB}{230, 130, 50}         % Darker orange for vertices
\definecolor{tealcolor}{RGB}{150, 210, 200}        % Light teal/cyan
\definecolor{tealedge}{RGB}{100, 180, 170}         % Lighter teal for edges
\definecolor{tealdot}{RGB}{40, 120, 110}           % Darker teal for vertices
\newtheoremstyle{results}% <name>
{8pt}% <Space above>
{8pt}% <Space below>
{\itshape}% <Body font>
{}% <Indent amount>
{\bfseries}% <Theorem head font>
{}% <Punctuation after theorem head>
{.5em}% <Space after theorem headi>
{}% <Theorem head spec (can be left empty, meaning `normal')>
\theoremstyle{results}
\newtheorem{thm}{Theorem}[section]
\newtheorem{proposition}[thm]{Proposition}
\newtheorem{corollary}[thm]{Corollary}
\newtheorem{lemma}[thm]{Lemma}
\newtheorem{claim}{Claim}
\newtheorem{maintheorem}[thm]{Theorem}
\newtheoremstyle{definitions}% <name>
{7pt}% <Space above>
{7pt}% <Space below>
{}% <Body font>
{}% <Indent amount>
{\bfseries}% <Theorem head font>
{}% <Punctuation after theorem head>
{.5em}% <Space after theorem headi>
{}% <Theorem head spec (can be left empty, meaning `normal')>
\theoremstyle{definitions}
\newtheorem{definition}[thm]{Definition}
\newtheorem{conjecture}[thm]{Conjecture}
\newtheorem{example}[thm]{Example}
\newtheorem{remark}[thm]{Remark}
\newtheorem{problem}[thm]{Problem}
\newtheorem{question}[thm]{Question}
\crefname{thm}{theorem}{theorems}
\Crefname{thm}{Theorem}{Theorems}
\crefname{lemma}{lemma}{lemmas}
\Crefname{lemma}{Lemma}{Lemmas}
\crefname{proposition}{proposition}{propositions}
\Crefname{proposition}{Proposition}{Propositions}
\crefname{corollary}{corollary}{corollaries}
\Crefname{corollary}{Corollary}{Corollaries}
\crefname{definition}{definition}{definitions}
\Crefname{definition}{Definition}{Definitions}
\crefname{conjecture}{conjecture}{conjectures}
\Crefname{conjecture}{Conjecture}{Conjectures}
\crefname{example}{example}{examples}
\Crefname{example}{Example}{Examples}
\crefname{remark}{remark}{remarks}
\Crefname{remark}{Remark}{Remarks}
\def\Z{\mathbb{Z}}
\def\A{\mathbb{A}}
\def\N{\mathbb{N}}
\def\Z{\mathbb{Z}}
\def\calM{\mathcal{M}}
\def\calN{\mathcal{N}}
\def\calS{\mathcal{S}}
\def\N{\mathbf{N}}
\newcommand{\w}{\mathbf{w}}
\newcommand{\abs}[1]{\left\lvert#1\right\rvert}
\newcommand{\B}{\textrm{B}}
\newcommand{\G}{\textrm{G}}
\newcommand{\aug}{\operatorname{aug}}
\newcommand{\Cone}{\operatorname{Cone}}
\newcommand{\Tr}{\operatorname{Tr}}
\newcommand{\trunc}{\Tr}
\newcommand{\M}{\mathrm{M}}
\renewcommand{\emptyset}{\varnothing}
\newcommand{\floor}[1]{\lfloor #1 \rfloor}
\renewcommand{\L}{\mathcal{L}}
\newcommand{\zero}{\hat{0}}
\renewcommand{\S}{\calS}
\newcommand{\des}{\mathrm{des}}
\newcommand{\un}{\hat{1}}
\newcommand{\At}{\mathrm{At}}
\newcommand{\Edge}{\mathcal{E}}
\renewcommand{\G}{\mathcal{G}}
\newcommand{\lexop}{\vartriangleleft_{\mathrm{lex}}^{\mathrm{op}}}
\newcommand{\lk}{\mathrm{lk}}
\renewcommand{\L}{\mathcal{L}}
\renewcommand{\N}{\mathcal{N}}
\renewcommand{\hat}{\widehat}
\newcommand{\Comp}{\mathrm{Comp}}
\renewcommand{\tilde}{\widetilde}
\renewcommand{\A}{\mathcal{A}}
\newcommand{\bigast}{\mathop{\scalebox{2.1}{$\ast$}}\displaylimits}
\newcommand{\lomeg}{\lambda_{\omega}}
\newcommand{\Irr}{\mathcal{I}}
\newcommand{\StP}{Q}
\title{Structural properties of nested set complexes}
\author{Basile Coron}
\address{B. Coron, Centre de Mathématiques Laurent Schwartz, Polytechnique, Paris, France}
\email{basile.coron@polytechnique.edu}
\author{Luis Ferroni}
\address{L. Ferroni, Dipartimento di Matematica, Universit\`a di Pisa, Pisa, Italy}\email{luis.ferroni@unipi.it}
\author{Shiyue Li}
\address{S. Li, Department of Mathematics, University of Michigan, Ann Arbor, USA}\email{shiyueli@umich.edu}
\date{\today}
\begin{document}
\begin{abstract}
    We study structural and topological properties of nested set complexes of matroids with arbitrary building sets, proving that these complexes are vertex decomposable and admit convex ear decompositions. These results unify and generalize several recent and classical theorems about nested set complexes, Bergman complexes, and augmented Bergman complexes of matroids. As a first application, we show that the $h$-vector of a nested set complex is strongly flawless and, in particular, top-heavy. We then specialize to the boundary complex of the Deligne--Mumford--Knudsen moduli space $\overline{\mathcal{M}}_{0, n}$ of rational stable marked curves, which coincides with the complex of trees, establishing new structural decomposition theorems and deriving combinatorial formulas for its face enumeration polynomials.
\end{abstract}

\maketitle
\setcounter{tocdepth}{1}\tableofcontents

\section{Introduction}\label{sec:intro}

\subsection{Overview}

In \cite{de1995wonderful}, De~Concini and Procesi introduced the wonderful compactification of the complement of a hyperplane arrangement in a projective space. This compactification is obtained by iteratively blowing up the ambient projective space along certain intersections of the hyperplanes. The boundary divisor created after the sequence of blow-ups is a simple normal crossings divisor if the set of intersections being blown-up is a \textit{building set}. The elements of the building set index the irreducible boundary divisors, and those divisors meet nontrivially if and only if their indices are \emph{nested}. Thus, the nested sets form a simplicial complex, called the nested set complex, whose faces are in bijection with the boundary strata of the wonderful compactification. This notion traces back to Fulton--MacPherson's compactification of configuration spaces \cite{fulton1994compactification}, and it also features prominently in the intersection theory of matroid and polymatroid Chow rings \cite{feichtner-yuzvinsky}: nested set complexes allow us to compute Gr\"obner bases for the defining ideals of those rings (see \cite{feichtner-yuzvinsky,pagaria-pezzoli}), and triangulate the Bergman complex \cite{feichtner2004matroid}, central in the combinatorial Hodge theory of matroids.

Passing over the inherent geometric origin of De~Concini and Procesi's construction, a more general and combinatorial version of nested set complexes was introduced by Feichtner and Kozlov in \cite{feichtner-kozlov}. Concretely, to any finite meet-semilattice $\mathcal{L}$ and any \emph{building set} $\mathcal{G}$ on $\mathcal{L}$, 
one can associate a simplicial complex $\mathcal{N}(\mathcal{L},\mathcal{G})$ called the \emph{nested set complex} of the pair $(\mathcal{L},\mathcal{G})$. The \emph{leitmotiv} of the present article is to study topological and combinatorial properties that pertain the structure of nested set complexes, mainly in the case when $\mathcal{L}$ is a geometric lattice, i.e., the lattice of flats of a matroid. As we demonstrate below, the structural properties we address in this article manifest at the level of numerical invariants of the matroid.  

A classical result by Feichtner and M\"uller \cite{feichtner-muller} establishes that for any geometric lattice $\mathcal{L}$ and any building set $\mathcal{G}$ the nested set complex $\mathcal{N}(\mathcal{L},\mathcal{G})$ is Cohen--Macaulay. The proof of this assertion relies on three facts: i) Cohen--Macaulayness is a topological property, ii) the topology of $\mathcal{N}(\mathcal{L},\mathcal{G})$ is independent of $\mathcal{G}$, and iii) choosing $\mathcal{G}$ to be the \emph{maximal} building set, the complex $\mathcal{N}(\mathcal{L},\mathcal{G})$ is the order complex of $\L$ which is shellable (due to a result of Bj\"orner \cite{bjorner-el-shellable}) and therefore Cohen--Macaulay.

\subsection{Main results}

A powerful property of simplicial complexes introduced by Provan and Billera \cite{provan-billera} is \emph{vertex decomposability}. Roughly speaking, a vertex decomposable complex admits an inductive dismantling process: one can repeatedly remove a suitably chosen vertex so that both the deletion and the link of that vertex remain vertex decomposable, and no new facets are created in the deletion. Vertex decomposability is a strong refinement of shellability: it not only guarantees the existence of shellings, but provides a canonical inductive mechanism to construct them. 

The property of being vertex decomposable, as well as the property of being shellable, is not closed under homeomorphisms. In particular, we are not able to use a topological reasoning like the one used to establish the Cohen--Macaulayness of $\N(\L,\G)$ as above. By providing a careful combinatorial analysis of the behavior of links and deletions of nested set complexes, we establish the following main result. 

\begin{maintheorem}\label{thm:vertex decomposability}
    Let $\L$ be a graded lattice and let $\G$ be any building set of $\L$. If $\L$ admits an injective admissible map, then the nested set complex $\N(\L, \G)$ is vertex decomposable. 
\end{maintheorem}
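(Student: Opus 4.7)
The plan is to proceed by induction on the cardinality of the building set $\G$. The base case, when $|\G| = 1$, gives a one-point complex, which is trivially vertex decomposable. For the inductive step, the strategy is to use the injective admissible map on $\L$ to single out a shedding vertex $G \in \G$, and then to verify that both the deletion $\N(\L,\G)\setminus G$ and the link $\lk_{\N(\L,\G)}(G)$ are vertex decomposable, and that no facet of the deletion is a facet of the original complex.

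The shedding vertex must be chosen so that its removal preserves the inductive structure. Injectivity of the admissible map should allow us to distinguish a particular $G \in \G$ --- for instance the building set element associated to a distinguished atom of $\L$ --- such that $\G \setminus \{G\}$ remains a building set admitting an injective admissible map, and such that the interval $[\zero, G]$ equipped with its induced building set and induced admissible map also satisfies the hypotheses of the theorem. For the link, the classical decomposition of Feichtner and Kozlov expresses $\lk_{\N(\L,\G)}(G)$ as a join of two nested set complexes: one on $[\zero, G]$ with building set $\G_{\leq G} \setminus \{G\}$, and one on a contraction-type quotient of $\L$ by $G$ with the induced building set. Each factor inherits an injective admissible map from the original, so the inductive hypothesis yields vertex decomposability of each, and joins of vertex decomposable complexes are vertex decomposable by Provan--Billera.

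For the deletion, the faces are the nested sets of $\G$ not containing $G$. In general this is only a subcomplex of $\N(\L, \G \setminus \{G\})$, since removing $G$ from $\G$ can turn previously non-nested sets into nested ones. I expect the main obstacle to lie precisely here: one must show that for the chosen shedding vertex these two complexes actually coincide, and then verify the \emph{no new facets} condition --- namely, that every facet of $\N(\L,\G)\setminus G$ can be extended by $G$ to a facet of $\N(\L,\G)$. The purity of $\N(\L,\G)$, which follows from Feichtner--M\"uller's Cohen--Macaulayness, reduces the latter to a rank-based combinatorial check, where the compatibility built into the admissible map should be essential.
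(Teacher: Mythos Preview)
Your overall architecture (induction, shedding vertex chosen via the admissible map, link handled by the Feichtner--Kozlov join decomposition) matches the paper's, and your treatment of the link is essentially correct. The genuine gap is in the deletion.

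You want to model $\N(\L,\G)\setminus G$ as $\N(\L,\G\setminus\{G\})$, i.e.\ keep the \emph{same} lattice $\L$ and shrink the building set. This cannot work. Every building set on $\L$ must contain every join-irreducible element (in particular every atom), so if $G$ is join-irreducible then $\G\setminus\{G\}$ is never a building set of $\L$ and there is no object to which the inductive hypothesis applies. Conversely, if you try to pick a non-join-irreducible $G$, then there exist antichains in $\G\setminus\{G\}$ whose join is $G$, and these become nested in $(\L,\G\setminus\{G\})$ but were not nested in $(\L,\G)$; the two complexes no longer coincide. So neither choice of $G$ makes your proposed identification go through.

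The paper's key idea, which you are missing, is to shrink the \emph{lattice} as well. One takes $G_{\max}$ to be the join-irreducible element with the largest $\omega$-value and removes from $\L$ the entire \emph{building ideal} $(G_{\max})=\{F\in\L : G_{\max}\text{ is a }\G\text{-factor of }F\}$. Then $\L\setminus(G_{\max})$ is again a lattice, $\G\setminus\{G_{\max}\}$ is a building set on it, and one has the exact equality $\N(\L,\G)\setminus G_{\max}=\N(\L\setminus(G_{\max}),\G\setminus\{G_{\max}\})$. The maximality of $\omega(G_{\max})$ is then used in an essential way to show two things: first, that the unique increasing saturated chain between any two elements of $\L\setminus(G_{\max})$ stays inside $\L\setminus(G_{\max})$, so the new lattice is graded of the \emph{same} rank (this is what gives the shedding condition, not merely the purity of $\N(\L,\G)$); and second, that one can extend $\omega$ to the possibly new join-irreducibles of $\L\setminus(G_{\max})$ so as to obtain an injective admissible map on the smaller lattice, closing the induction.
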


The notion of admissible map was defined by Stanley in \cite{stanley-74}. It is known that any finite upper-semimodular lattice (in particular, any geometric lattice) admits an injective admissible map (see~\cite[Proposition~2.2]{stanley-74}). Therefore, we have the following immediate corollary.

\begin{corollary}\label{coro:geometric-lattices-vd}
    Let $\L(\M)$ be the lattice of flats of a matroid $\M$, and let $\G$ be any building set on $\mathcal{L}(\M)$. The nested set complex $\N(\L(\M), \G)$ is vertex decomposable and, in particular, shellable.
\end{corollary}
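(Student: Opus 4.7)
The plan is to deduce this corollary directly from \Cref{thm:vertex decomposability} by verifying its two hypotheses for $\L = \L(\M)$, and then to invoke the standard implication from vertex decomposability to shellability.

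First, I would recall the relevant structural facts about the lattice of flats $\L(\M)$ of a matroid $\M$. It is a classical fact (going back to Birkhoff--Whitney) that $\L(\M)$ is a \emph{geometric lattice}: it is finite, atomistic, and (upper) semimodular. In particular, $\L(\M)$ is graded, with rank function coinciding with the matroid rank. This verifies the first hypothesis of \Cref{thm:vertex decomposability}, namely that $\L(\M)$ is a graded lattice.

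Next, I would verify that $\L(\M)$ admits an injective admissible map, in the sense of Stanley~\cite{stanley-74}. This is precisely the content of \cite[Proposition~2.2]{stanley-74}, which asserts that any finite upper-semimodular lattice admits such a map; since geometric lattices are upper-semimodular, the hypothesis applies. This verifies the second hypothesis of \Cref{thm:vertex decomposability}.

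With both hypotheses verified, \Cref{thm:vertex decomposability} applied to $\L = \L(\M)$ and to any building set $\G$ on $\L(\M)$ yields that $\N(\L(\M),\G)$ is vertex decomposable. The ``in particular'' clause then follows from the foundational result of Provan and Billera~\cite{provan-billera} that every vertex decomposable complex is shellable.

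Since this proof is essentially a bookkeeping argument, I do not anticipate any real obstacle; the entire technical weight has been absorbed into \Cref{thm:vertex decomposability}. The only point worth being explicit about is the chain of standard implications \emph{geometric} $\Rightarrow$ \emph{upper-semimodular} $\Rightarrow$ \emph{admits an injective admissible map}, which justifies that the hypotheses of the main theorem are met in the matroid setting.
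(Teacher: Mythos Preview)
Your proposal is correct and matches the paper's approach exactly: the paper states this corollary as an immediate consequence of \Cref{thm:vertex decomposability}, noting just before it that any finite upper-semimodular lattice (in particular, any geometric lattice) admits an injective admissible map by \cite[Proposition~2.2]{stanley-74}. There is no additional content in the paper's treatment beyond what you have written.
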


As we explain below, in Example~\ref{ex:augmented-bergman}, the above corollary unifies and provides a far-reaching generalization of the fact that Bergman complexes and augmented Bergman complexes of matroids are vertex decomposable (for the first, see \cite[Proposition~11.3]{BjornerWachs1997}, and for the second see \cite{amzi-jeffs}). As a consequence of Corollary \ref{coro:geometric-lattices-vd} we are able to derive a new combinatorial formula (Proposition \ref{prop:h-poly-descents}) for the $h$-vector of any nested set complex $\N(\L, \G)$ with $\L$ a geometric lattice, in terms of counting maximal nested sets with a certain descent statistics. It is natural to inquire about the numerical shape of those $h$-vectors. 

\medskip 

Given an integer $d \geqslant 0$ and a sequence of nonnegative real numbers $(h_0, \ldots, h_d)$, we say that it is \emph{flawless} or \emph{top-heavy} if, for $0 \leqslant i \leqslant \floor{d/2}$, 
\begin{align*}
h_i \leqslant h_{d-i}. \tag{\text{flawless}}
\end{align*} We say that it is \emph{strongly flawless}, if it satisfies the inequalities
\begin{align*}
h_0 \leqslant h_1 \leqslant \cdots \leqslant h_{\lfloor d/2 \rfloor}, \quad \text{ and } \quad h_i \leqslant h_{d-i}, \text{ for $0 \leqslant i \leqslant \floor{d/2}$}. \tag{strongly flawless}
\end{align*} 
Strongly flawless sequences are ubiquitous across algebraic geometry, topology, and combinatorics. A prominent example of strongly flawless sequence is the cardinality sequence of a Bruhat interval, or equivalently, the dimension sequence of the $\ell$-adic cohomology of a Schubert variety, both associated to a crystallographic Coxeter group~\cite{bjorner2009shape}. Another source of examples comes from the $h$-vectors of several simplicial complexes associated to a matroid: the independence complex~\cite{chari}, the broken circuit complex~\cite{ardila-denham-huh}, the Bergman complex~\cite{nyman-swartz}, and the augmented Bergman complex~\cite{athanasiadis-ferroni} are shown to have top-heavy $h$-vectors. Another relevant example linked to matroids originates in the singular Hodge theory of matroids: the sequence of Whitney numbers of the second kind of matroids form a strongly flawless sequence, due to the existence of a Lefschetz module structure on the intersection cohomology of matroid Schubert varieties \cite{braden2020singular}.

It is well-known that vertex decomposability and shellability are not useful to deduce inequalities beyond the Macaulay inequalities that any Cohen--Macaulay complex satisfies (see Theorem~\ref{thm:equiv-h-vector} below). In particular we are required to employ a different approach to deduce more useful information on the shape of the $h$-vectors of nested set complexes. To this end, we prove the following result.

\begin{maintheorem}\label{thm:cvx-ear-dec}
    Let $\L(\M)$ be the lattice of flats of a matroid $\M$ and let $\G$ be an arbitrary building set. The nested set complex $\N(\L(\M),\G)$ admits a convex ear decomposition. 
\end{maintheorem}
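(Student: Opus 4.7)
The plan is to leverage the classical convex ear decomposition of the Bergman complex due to Nyman and Swartz, and transfer it to nested set complexes with arbitrary building sets by iterated stellar subdivisions. Recall the description, going back to Feichtner--Kozlov, of how nested set complexes vary with the building set: whenever $\G' \subseteq \G$ are two building sets on $\L(\M)$, the complex $\N(\L(\M), \G)$ is obtained from $\N(\L(\M), \G')$ by a sequence of stellar subdivisions, one for each element of $\G \setminus \G'$, performed in order of decreasing rank. Applying this with $\G'$ equal to the minimal building set $\G_{\min}$ realizes $\N(\L(\M), \G)$ as an iterated stellar subdivision of the Bergman complex $\mathrm{Bergm}(\M) = \N(\L(\M), \G_{\min})$, which admits a convex ear decomposition by \cite{nyman-swartz}.

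The heart of the proof is then a preservation lemma: if $\Delta = \Sigma_1 \cup \cdots \cup \Sigma_m$ is a convex ear decomposition and $\sigma \in \Delta$ is a face, then $\mathrm{sd}_\sigma \Delta$ admits the decomposition $\mathrm{sd}_\sigma \Sigma_1 \cup \cdots \cup \mathrm{sd}_\sigma \Sigma_m$ (with $\mathrm{sd}_\sigma \Sigma_j = \Sigma_j$ whenever $\sigma \notin \Sigma_j$). Each ear $\Sigma_j$ containing $\sigma$ embeds as a subcomplex of the boundary of a simplicial polytope $P_j$, and by \emph{pulling} the face $\sigma$ -- that is, forming $P_j' = \mathrm{conv}(V(P_j) \cup \{v\})$ for a point $v$ placed slightly outside $\sigma$ along an outward normal direction -- one produces a new simplicial polytope whose boundary is the stellar subdivision of $\partial P_j$ at $\sigma$. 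Since stellar subdivisions preserve the PL-type of balls, $\mathrm{sd}_\sigma \Sigma_j$ remains a $(d-1)$-ball, and it naturally embeds into $\partial P_j'$. Iterating this preservation lemma along the sequence of stellar subdivisions from $\mathrm{Bergm}(\M)$ to $\N(\L(\M), \G)$ yields the desired convex ear decomposition.

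The main technical obstacle is verifying the intersection condition $\mathrm{sd}_\sigma \Sigma_j \cap \bigcup_{i<j} \mathrm{sd}_\sigma \Sigma_i = \partial \mathrm{sd}_\sigma \Sigma_j$, particularly when $\sigma$ lies in $\partial \Sigma_j$ (so that the subdivision genuinely modifies the boundary of an ear). When $\sigma$ lies in the interior of $\Sigma_j$, the boundary of $\Sigma_j$ is not affected and the condition transfers directly; the delicate case is when $\sigma$ sits on $\partial \Sigma_j$ and simultaneously appears in an earlier ear $\Sigma_i$. I expect that processing elements of $\G \setminus \G_{\min}$ in decreasing rank order, and using the rigid combinatorial correspondence between faces of $\N(\L(\M), \G')$ and nested sets, forces the subdivisions to intertwine coherently with the ears.

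Should this direct transfer prove too intricate, a fallback is to construct the ears directly: take as primary ear the boundary of the nestohedron associated with $(\L(\M), \G)$, and index subsequent ears by coatoms $H$ of $\L(\M)$, with $\Sigma_H$ collecting maximal nested sets containing (a representative of) $H$. One then proceeds by induction on $\rk(\M)$, using the restriction $\L(\M|H)$ with the induced building set to provide the inductive convex ear decomposition of each $\Sigma_H$, and invoking the vertex decomposability provided by Main Theorem~\ref{thm:vertex decomposability} to guarantee the compatible shellings needed to glue the ears.
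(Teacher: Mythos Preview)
Your primary approach rests on a misidentification that makes the strategy circular. The Nyman--Swartz convex ear decomposition is for the \emph{order complex} of a geometric lattice, which is $\N(\L(\M),\G_{\max})$, not $\N(\L(\M),\G_{\min})$. You correctly recall that for $\G'\subseteq\G$ the complex $\N(\L,\G)$ is obtained from $\N(\L,\G')$ by stellar subdivisions, but this means the stellar subdivisions run \emph{toward} $\G_{\max}$, not away from it. To reach an arbitrary $\G$ starting from the Nyman--Swartz case $\G_{\max}$ you would have to perform stellar \emph{welds}, and there is no reason convex ear decompositions should survive those. If instead you start from $\G_{\min}$ (so that the subdivision direction is right), you need a convex ear decomposition of $\N(\L(\M),\G_{\min})$ as input---but that is already an instance of the theorem you are trying to prove, and it is not covered by Nyman--Swartz.

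It is a pity, because your preservation lemma is closer to being true than you seem to think: stellar subdivision commutes with finite unions and intersections of subcomplexes, and one can check that $\partial(\mathrm{sd}_\sigma\Sigma_j)=\mathrm{sd}_\sigma(\partial\Sigma_j)$ for a ball $\Sigma_j$, so the boundary condition does transfer. But without a valid base case the induction never starts. Your fallback is too schematic to evaluate: ``the nestohedron associated with $(\L(\M),\G)$'' is not a defined object when $\L(\M)$ is not Boolean, and indexing ears by coatoms does not match any known construction. The paper instead builds the ears directly, indexed by the nbc-bases $B_1,\ldots,B_m$ of $\M$ in lexicographic order: each $\Sigma_i=\N(\L_{|B_i},\G_{|B_i})$ is a polytopal sphere because $\L_{|B_i}$ is Boolean, and the ear $\Delta_i\subseteq\Sigma_i$ is cut out by a label condition $\lambda(\S)=B_i$ on maximal nested sets. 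This is a genuine extension of the Nyman--Swartz argument to arbitrary $\G$, not a transfer from it.
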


Convex ear decompositions were introduced by Chari in \cite{chari}. Roughly speaking, a simplicial complex $\Delta$ admits a convex ear decomposition if it can be built inductively by gluing together a sequence of simplicial balls that come from convex geometry. One starts from the boundary complex of a simplicial polytope, and then attaches subsequent ``ears'' one at a time along their entire boundary, so that each new piece fills in a ball whose boundary is already present in the previously constructed complex. In this way, the complex is assembled from polytope-like building blocks, attached in a maximally controlled fashion.

Similar to vertex decomposability, admitting a convex ear decomposition is not a topological property. On the other hand, as hinted from the above discussion, one reason convex ear decompositions are particularly interesting is because they imply, among other inequalities, the strong flawlessness of the $h$-vector of the complex.

\begin{corollary}\label{cor:inequalities}
     Let $\L(\M)$ be the lattice of flats of a matroid $\M$ of rank $r+1$ and let $\G$ be an arbitrary building set. The $h$-vector of $\N(\L(\M),\G)$, denoted $(h_0,h_1,\ldots,h_{r})$, is strongly flawless, i.e.,
    \begin{equation}\label{eq:top-heavy} 
    h_0 \leqslant h_1 \leqslant \cdots \leqslant h_{\lfloor r/2\rfloor}\qquad \text{and}\qquad 
     h_i \leqslant h_{r-i}\qquad \text{for each $1\leqslant i \leqslant \lfloor r/2\rfloor$.}
    \end{equation}
    Moreover,
    the $g$-vector, $(h_0, h_1-h_0, h_2-h_1, \ldots, h_{\lfloor r/2 \rfloor} - h_{\lfloor r/2 \rfloor - 1})$,
    is an $M$-vector, and the complementary vector,
    $(h_r-h_0, h_{r-1}-h_{1}, \ldots, h_{\lfloor (r+1)/2\rfloor} -h_{\lceil (r-1)/2\rceil})$,
    is a sum of $M$-vectors.
\end{corollary}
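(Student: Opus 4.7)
The plan is to derive this corollary as a direct consequence of Theorem~\ref{thm:cvx-ear-dec}, using the classical numerical implications of a convex ear decomposition. First I would note that $\N(\L(\M),\G)$ is pure of dimension $r-1$; this is already guaranteed by the existence of a convex ear decomposition, since each ear is a simplicial ball of that dimension, and it is also implicit in Corollary~\ref{coro:geometric-lattices-vd} via shellability. Thus the $h$-vector $(h_0,\ldots,h_r)$ is a well-defined sequence of nonnegative integers.

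Next, I would invoke Chari's original theorem on convex ear decompositions \cite{chari}: for any $(d-1)$-dimensional simplicial complex admitting such a decomposition, one has $h_i \leqslant h_{d-i}$ for $0\leqslant i \leqslant d/2$, and the $g$-vector $(h_0,\, h_1-h_0,\, \ldots,\, h_{\lfloor d/2\rfloor}-h_{\lfloor d/2\rfloor -1})$ is an $M$-vector. Since every entry of an $M$-vector is nonnegative, the weakly increasing chain $h_0 \leqslant h_1 \leqslant \cdots \leqslant h_{\lfloor d/2\rfloor}$ is an immediate consequence. Specializing to $d=r$ yields simultaneously the strong flawlessness stated in \eqref{eq:top-heavy} and the $M$-vector property of the $g$-vector.

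For the last statement, that the complementary vector $(h_r-h_0,\, h_{r-1}-h_1,\, \ldots,\, h_{\lfloor (r+1)/2\rfloor}-h_{\lceil (r-1)/2\rceil})$ is a sum of $M$-vectors, I would invoke Swartz's refinement of Chari's theorem, which establishes precisely this numerical conclusion for any simplicial complex admitting a convex ear decomposition. In this sense, the corollary is essentially a matter of transcription: the entire difficulty is localized in Theorem~\ref{thm:cvx-ear-dec}, and no further argument is required once that theorem is in hand.
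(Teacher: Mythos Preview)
Your proposal is correct and follows essentially the same route as the paper: the corollary is deduced directly from Theorem~\ref{thm:cvx-ear-dec} together with the known numerical consequences of a convex ear decomposition due to Chari and Swartz. The only minor discrepancy is one of attribution: the paper credits the $M$-vector property of the $g$-vector to Swartz \cite[Corollary~3.10]{swartz} rather than to Chari, whose \cite[Corollary~2]{chari} gives only the strongly flawless inequalities; this does not affect the mathematical content of your argument.
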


The inequalities in \eqref{eq:top-heavy} follow from results by Chari \cite[Corollary~2]{chari}. The fact that the $g$-vector of a simplicial complex with a convex ear decomposition is an $M$-vector was proved by Swartz in \cite[Corollary~3.10]{swartz}. The fact that the complementary vector of a simplicial complex with a convex ear decomposition is a sum of $M$-vectors follows from results by Swartz \cite[Section~4]{swartz} or, in a more general context, by work of Larson and Stapledon \cite{larson-stapledon}.

The above results generalizes and extends results by Nyman and Swartz \cite{nyman-swartz} and by Athanasiadis and Ferroni \cite{athanasiadis-ferroni}. Despite the fact that the inequalities \eqref{eq:top-heavy} at first glance suggest a unimodality phenomenon, it is not true in general that the $h$-vector of the nested set complex of a geometric lattice with an arbitrary building set is unimodal (see Section~\ref{sec:unimodality-real-roots}).

\medskip

We explore the consequences of our results in a relevant special case: when the poset $\L$ is the partition lattice $\Pi_n$, i.e., the set of all partitions of $[n]=\{1,2,\ldots,n\}$ ordered by coarsening, and the building set is the \emph{minimal building set} $\G = \G_{\min}$. The lattice $\Pi_n$ is the intersection lattice of the $n$-th braid arrangement. The wonderful compactification of the complement of the $n$-th braid arrangement with respect to the building set $\G_{\min}$ can be identified with the Deligne--Mumford--Knudsen compactification $\overline{\calM}_{0,n+1}$, and so $\N(\Pi_n,\G_{\min})$ is isomorphic to the boundary complex of $\overline{\calM}_{0,n+1}$. That complex is featured extensively in the literature, and has been studied from many different perspectives: tropical \cite{mikhalkin2007moduli, allcock2022tropical}, representation-theoretic \cite{robinson2004partition}, and geometric group-theoretic \cite{vogtmann1990local}. Feichtner proved in \cite{feichtner-braid} that $\N(\Pi_n, \G_{\min})$ is isomorphic to the \emph{complex of trees} $T_n$. Classical results by Trappmann and Ziegler \cite{trappmann-ziegler} (and, independently, Wachs) show that $T_n$ (and thus $\N(\Pi_n,\G_{\min})$) is shellable, and homotopy equivalent to a wedge of $(n-1)!$ spheres of dimension $n-3$ \cite{vogtmann1990local, robinson2004partition}. Our main results imply something stronger: the complex of trees is vertex decomposable and admits a convex ear decomposition (see Corollary~\ref{coro:trees-vd}).  

Using our new combinatorial formula for the $h$-vector of nested set complexes given by Proposition \ref{prop:h-poly-descents}, we state explicitly a combinatorial description of the $h$-vector of $\N(\Pi_n,\G_{\min})$ is terms of counting the so-called Stirling permutations introduced by Gessel and Stanley~\cite{gessel-stanley}, and we deduce new inequalities between the entries of the Second Eulerian polynomials (see Corollary~\ref{coro:stirling-top-heavy}). 

\subsection*{Related work}

In the final stages of this project, we learned that Backman, Dorpalen-Barry, Nathanson, Partida, and Prime \cite{backman-dorpalen-nathanson-partida-prime} had independently proved the shellability of nested set complexes of geometric lattices and arbitrary building sets via the tropical geometry of normal complexes. Their methods and ideas complement those developed in the present manuscript. For this reason, we decided to post our results on the same day.

\subsection*{Acknowledgments}
We thank Spencer Backman, Galen Dorpalen-Barry, Anastasia Nathanson, Ethan Partida, and Noah Prime for kindly discussing their results on shellability in \cite{backman-dorpalen-nathanson-partida-prime}. We are also grateful to Christos Athanasiadis for valuable discussions on convex ear decompositions, and to Matt Larson for comments on doubly Cohen--Macaulay complexes. %what else?

\section{Background}\label{sec:prelim}

We begin by recalling basic definitions and by reviewing the relationships among several structural, topological, and numerical properties of simplicial complexes. To give a sense of the landscape, we include \Cref{fig:properties} summarizing the implications among the properties relevant to this paper.

A \emph{simplicial complex} $\Delta$ on a finite vertex set $V$ is a collection of subsets of $V$, called \emph{faces}, which is closed under inclusion: if $F \in \Delta$ and $G \subseteq F$, then $G \in \Delta$. We also require that $\Delta$ contains every singleton in $V$. The \emph{dimension} of a face $F$ is $\dim F = |F| - 1$, and the dimension of $\Delta$ is the maximum dimension of its faces. A simplicial complex is \emph{pure} if all its maximal faces, called \emph{facets}, have the same dimension.  A \emph{subcomplex} $\Delta'$ of a simplicial complex $\Delta$ is a subset of $\Delta$ which is a simplicial complex.  For more background on simplicial complexes, we refer to \cite{stanley-96}. 

We set notations for several operations on simplicial complexes. For a simplicial complex $\Delta$ of dimension $d$ and a nonempty face $F \in \Delta$, the \emph{deletion} of $F$ from $\Delta$ is 
\[
\Delta \setminus F \coloneqq \{G \in \Delta \mid F \not\subseteq G\}. 
\]
The \emph{link} of a nonempty face $F$ in $\Delta$ is the subcomplex $\lk_{\Delta}(F)$ defined by 
\[
\lk_{\Delta}(F) \coloneq \{G \in \Delta \mid F \cap G = \varnothing, F \cup G \in \Delta\}. 
\] %In particular, $\lk_{\Delta}(\varnothing) = \Delta$. 
For simplicial complexes $\Delta$ and $\Delta'$ on disjoint vertex sets $V$ and $V'$, the \emph{join} of $\Delta$ and $\Delta'$ is the simplicial complex on the vertex set $V \sqcup V'$ defined by
\[
\Delta \ast \Delta' \coloneqq \{F \cup F' \mid F \in \Delta,\, F' \in \Delta'\}. 
\]
For faces $F_1, \ldots, F_k$, we write $\langle F_1, \ldots, F_k \rangle$ for the smallest subcomplex of $\Delta$ containing all faces $F_1, \ldots, F_k$.

\begin{figure}[ht]
\begin{tikzpicture}[
    box/.style={draw, rounded corners, inner sep=4pt, minimum height=1.8em},
    impl/.style={double, double distance=2pt, -Implies, shorten >=3pt, shorten <=3pt},
    every matrix/.style={column sep=0.5em, row sep=1.2cm}
]
\matrix (m) [matrix of nodes, nodes={box}] {
    |[name=vd]| vertex decomposability & |[name=ps]| PS ear decomposition &  \\
    |[name=sh]| shellability & & |[name=ced]| convex ear decomposition \\
    |[name=cm]| Cohen-Macaulay & |[name=dcm]| doubly Cohen-Macaulay &  \\
    |[name=nh]| nonnegative $h$ & |[name=sff]| strongly flawless $f$ & |[name=sfh]| strongly flawless $h$ \\
};

% Shaded backgrounds with equal widths
\begin{scope}[on background layer]
    % Define common left and right boundaries based on full matrix width
    \path (vd.west |- vd.north) ++(-10pt, 5pt) coordinate (TL);
    \path (ced.east |- nh.south) ++(10pt, -5pt) coordinate (BR);
    
    % Structural properties (top 2 rows)
    \fill[blue!10, rounded corners=8pt] 
        ([yshift=15pt]TL) rectangle ([yshift=-15pt]BR |- sh.south);
    
    % Topological properties (row 3)
    \fill[orange!15, rounded corners=8pt] 
        ([yshift=15pt]TL |- cm.north) rectangle ([yshift=-15pt]BR |- cm.south);
    
    % Numerical properties (row 4)
    \fill[teal!15, rounded corners=8pt] 
        ([yshift=15pt]TL |- nh.north) rectangle ([yshift=-15pt]BR);
\end{scope}

% Labels (on top of everything)
\node[anchor=north east, font=\small, color=blue!50] 
    at ([xshift=-5pt, yshift=15pt]BR |- vd.north) {\scshape structural};
\node[anchor=north east, font=\small, color=orange!70] 
    at ([xshift=-5pt, yshift=15pt]BR |- cm.north) {\scshape topological};
\node[anchor=north east, font=\small, color=teal!70] 
    at ([xshift=-5pt, yshift=15pt]BR |- nh.north) {\scshape numerical};

% Arrows
\draw[impl] (vd) -- (sh) node[midway, left] {{\footnotesize \cite{provan-billera}}};
\draw[impl] (ps) -- (ced);
\draw[impl] (ps) -- (sh) node[midway, right=5mm] {\footnotesize \cite{chari}};
\draw[impl] (sh) -- (cm) node[pos=0.3, left] {\footnotesize\cite{stanley-96}};
\draw[impl] (ced) -- (sfh) node[midway, left] {\footnotesize\cite{chari}};
\draw[impl] (ced) -- (dcm) node[midway, left=10mm, above] {\footnotesize\cite{swartz}};
\draw[impl] (dcm) -- (cm);
\draw[impl] (cm) -- (nh) node[pos=0.3, left] {\footnotesize\cite{stanley-96}};
\draw[impl] (cm) -- (sff);
\end{tikzpicture}
\caption{Relationships among the structural, topological, and numerical properties of simplicial complexes relevant to the present paper.}
\label{fig:properties}
\end{figure}
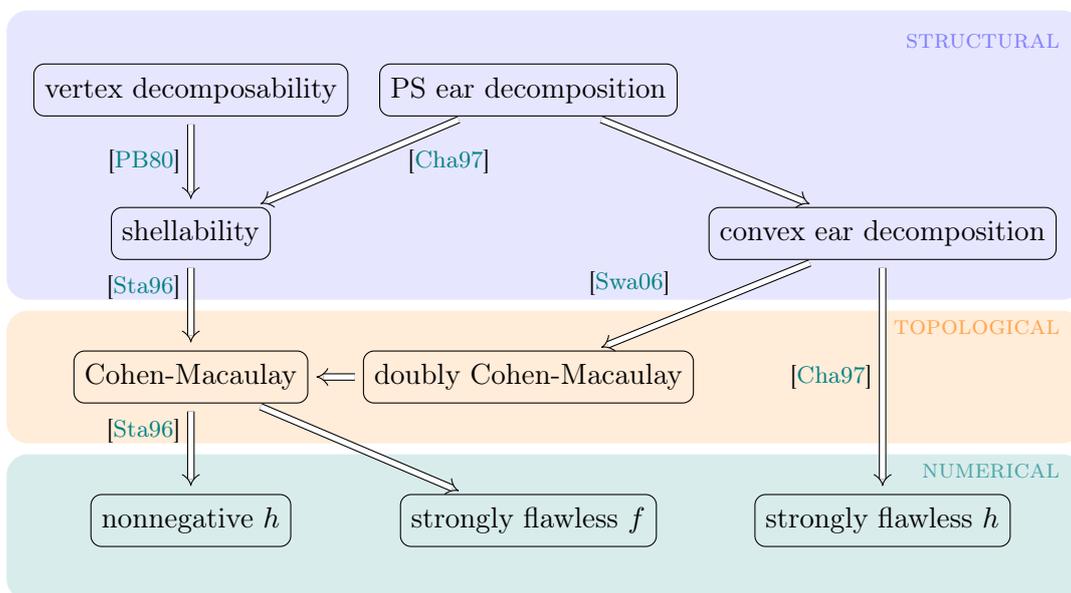

\subsection{Cohen-Macaulayness and shellability} 
A simplicial complex $\Delta$ is said to be \emph{Cohen--Macaulay} over a field $K$ (or over the ring $\mathbb{Z}$)\footnote{If $\Delta$ is Cohen--Macaulay over $\mathbb{Z}$, then $\Delta$ is Cohen--Macaulay over any field $K$. If $\Delta$ is Cohen--Macaulay over a field $K$, then $\Delta$ is Cohen--Macaulay over $\mathbb{Q}$.} if, for every $F \in \Delta$, the reduced homology group $\widetilde{H}_i(\lk_{\Delta}(F); K)$ is trivial for every $1 \leqslant i < \dim \lk_{\Delta}(F)$. Simplicial complexes that are Cohen--Macaulay over a field $K$ can also be characterized as those complexes whose Stanley--Reisner ring is a Cohen--Macaulay ring over $K$. For background on combinatorics and commutative algebra we refer to \cite{stanley-96}. By a result of Munkres (see \cite[Proposition~4.3]{stanley-96}) Cohen--Macaulayness only depends on the topology of the geometric realization of $\Delta$, and therefore, is a topological property. Shellability, on the other hand, is a structural property that is often used to prove Cohen--Macaulayness by combinatorial methods. 

\begin{definition}
    A $(d-1)$-dimensional simplicial complex $\Delta$ is \emph{shellable} if $\Delta$ is pure and there exists an ordering $F_1, F_2, \ldots, F_k$ of its facets if for $2 \leqslant i \leqslant k$, the subcomplex 
    \[
    \langle F_1, \ldots, F_{i-1}\rangle \cap \langle F_i \rangle \subset \Delta
    \] is pure of dimension $d-2$. 
\end{definition}
Despite the strong topological consequences of being shellable, it is not true that shellability is a topological property. 

Many simplicial complexes arising in algebraic and geometric aspects of matroid theory are known to be shellable and thus also Cohen--Macaulay. This includes classical examples such as independence complexes, broken-circuit complexes, Bergman complexes, and augmented Bergman complexes (for the first three, we refer to \cite{bjorner}, whereas for the last we refer to \cite{bullock-et-al,amzi-jeffs}).  

\begin{definition}
    A Cohen--Macaulay simplicial complex $\Delta$ is said to be \emph{doubly Cohen--Macaulay} if, for every vertex $v\in \Delta$, the complex $\Delta \setminus \{v\}$ is Cohen--Macaulay of the same dimension as $\Delta$.
\end{definition}

A one-dimensional simplicial complex $\Delta$ is doubly Cohen--Macaulay if and only if it is a biconnected graph. In particular, double Cohen--Macaulayness is often seen as a higher dimensional analogue of biconnectedness. By a result of Walker (see \cite[Proposition~3.7]{stanley-96}), double Cohen--Macaulayness is a topological property. Doubly Cohen-Macaulayness neither implies nor is implied by shellability. 

\subsection{Vertex decomposability}\label{subsec:VD}
In this section, we collect definitions and properties related to the notion of vertex decomposition, which was introduced in a classical paper by Provan and Billera \cite{provan-billera}.

\begin{definition}[{\cite[Definition 2.1]{provan-billera}}]
    A simplicial complex $\Delta$ is \emph{$k$-decomposable} if $\Delta$ is pure, and either $\Delta$ is a simplex, or inductively there exists a face $F$ of $\Delta$ with $\dim F \leqslant k$ with the following properties: 
    \begin{enumerate}[\normalfont (i),leftmargin=20pt]
        \item (Shedding) Every facet of $\Delta \setminus F$ is also a facet of $\Delta$ 
        \item (Recursive decomposability) The link $\lk_{\Delta}(F)$ is ($d-\abs{F}$)-dimensional and $k$-decomposable, and the deletion $\Delta \setminus F$ is $d$-dimensional and $k$-decomposable. 
    \end{enumerate}
\end{definition}
If a face $F \in \Delta$ satisfies the shedding condition, it is called a \emph{shedding} face. By definition, for $0 \leqslant k < \dim \Delta$, $k$-decomposability implies $(k + 1)$-decomposability. For $\dim \Delta \leqslant k$, the notions of $k$-decomposability and $(k+1)$-decomposability are equivalent. For $k = \dim \Delta$, the property of being $k$-decomposable is known to be equivalent to shellability (see \cite[Theorem~2.8]{provan-billera}). In particular being $k$-decomposable for any $k \geqslant 0$ implies shellability.

The case $k = 0$, called \emph{vertex decomposability}, is the strongest property in this hierarchy and a central focus of this paper. We specialize the definition to this case.
\begin{definition}
    A simplicial complex $\Delta$ is \emph{vertex decomposable}, if $\Delta$ is a simplex, or inductively, $\Delta$ is pure and there exists a vertex $v \in \Delta$ such that the following two properties hold: 
    \begin{enumerate}[\normalfont (i)]
        \item (Shedding) Every facet of $\Delta \setminus \{v\}$ is also a facet of $\Delta$ (in this case $v$ is said to be a \emph{shedding vertex}).
        \item (Recursive decomposability) The deletion $\Delta \setminus \{v\}$ and the link $\lk_{\Delta}(v)$ are both vertex decomposable.
    \end{enumerate}
\end{definition}

Several operations on simplicial complexes preserve vertex decomposability (we refer to \cite[Section~2]{provan-billera} for the proofs and further details). If $\Delta$ vertex decomposable, then so is $\lk_{\Delta}(F)$ for each $F\in\Delta$. The join of two simplicial complexes is vertex decomposable if and only if each of the two complexes is vertex decomposable (in particular, coning over or deconing vertex decomposable complexes yields vertex decomposable complexes). Stellar subdivisions preserve vertex decomposability.

Several classes of simplicial complexes related to matroids are known to be vertex decomposable, for example the independence complex~\cite{provan-billera}, the broken circuit complex~\cite{provan-billera}, the Bergman complex (equivalently, the order complex of a geometric lattice)~\cite{BjornerWachs1997}, and the augmented Bergman complex~\cite{amzi-jeffs}.

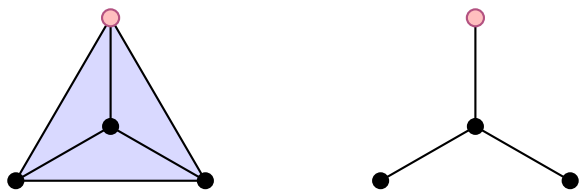
\begin{figure}[ht]
\begin{tikzpicture}[scale=0.8]
    % === Boolean 3 with max (left) ===
    \begin{scope}[shift={(-3,0)}]
        % Filled triangular faces
        \fill[blue!15] (90:1.8) -- (210:1.8) -- (0,0) -- cycle;
        \fill[blue!15] (90:1.8) -- (330:1.8) -- (0,0) -- cycle;
        \fill[blue!15] (210:1.8) -- (330:1.8) -- (0,0) -- cycle;
        
        % Outer edges
        \draw[thick] (90:1.8) -- (210:1.8);
        \draw[thick] (90:1.8) -- (330:1.8);
        \draw[thick] (210:1.8) -- (330:1.8);
        
        % Inner edges to center
        \draw[thick] (0,0) -- (90:1.8);
        \draw[thick] (0,0) -- (210:1.8);
        \draw[thick] (0,0) -- (330:1.8);
        
        % Vertices (outer)
        \fill[pink] (90:1.8) circle (4pt);  % top - shedding vertex
        \draw[thick, magenta!70!black] (90:1.8) circle (4pt);
        \fill[black] (210:1.8) circle (4pt);
        \fill[black] (330:1.8) circle (4pt);
        
        % Center vertex
        \fill[black] (0,0) circle (4pt);
    \end{scope}
    
    % === Braid 3 with min (right) ===
    \begin{scope}[shift={(3,0)}]
        % Edges from center to outer vertices
        \draw[thick] (0,0) -- (90:1.8);
        \draw[thick] (0,0) -- (210:1.8);
        \draw[thick] (0,0) -- (330:1.8);
        
        % Vertices (outer)
        \fill[pink] (90:1.8) circle (4pt);  % top - shedding vertex
        \draw[thick, magenta!70!black] (90:1.8) circle (4pt);
        \fill[black] (210:1.8) circle (4pt);
        \fill[black] (330:1.8) circle (4pt);
        
        % Center vertex
        \fill[black] (0,0) circle (4pt);
    \end{scope}
\end{tikzpicture}
\centering
\caption{Vertex decomposable simplicial complexes with shedding vertices.}
\label{fig:VD}
\end{figure}

\subsection{Convex ear decomposition} Convex ear decompositions were introduced by Chari \cite{chari} as a tool to study face enumerations of independence complexes of matroids. It is a higher dimensional analogue of ear decompositions of graphs. 
\begin{definition}
    For a simplicial complex $\Delta$ of dimension $d-1$, a \emph{convex ear decomposition} of $\Delta$ is a sequence $(\Delta_1, \ldots, \Delta_m)$ of subcomplexes of $\Delta$ such that 
    \begin{enumerate}[\normalfont (i)]
        \item $\Delta = \bigcup_{i=1}^{m} \Delta_{m}.$
        \item The first subcomplex $\Delta_1$ is isomorphic to the boundary complex of a $d$-dimensional simplicial polytope. 
        \item For $2 \leqslant k \leqslant m$, $\Delta_k$ is a $(d-1)$-dimensional simplicial ball which is isomorphic to a proper subcomplex of the boundary complex of a $d$-dimensional simplicial polytope, and 
        \[
        \partial \Delta_k = \Delta_k \cap \left( \bigcup_{i=1}^{k-1} \Delta_i \right). 
        \]
    \end{enumerate}
    If such a decomposition exists, each $\Delta_{i}$ for $i \geqslant 2$ is called an \emph{ear}. 
\end{definition}

\begin{figure}[ht]
    \centering
    \includegraphics[width=\textwidth]{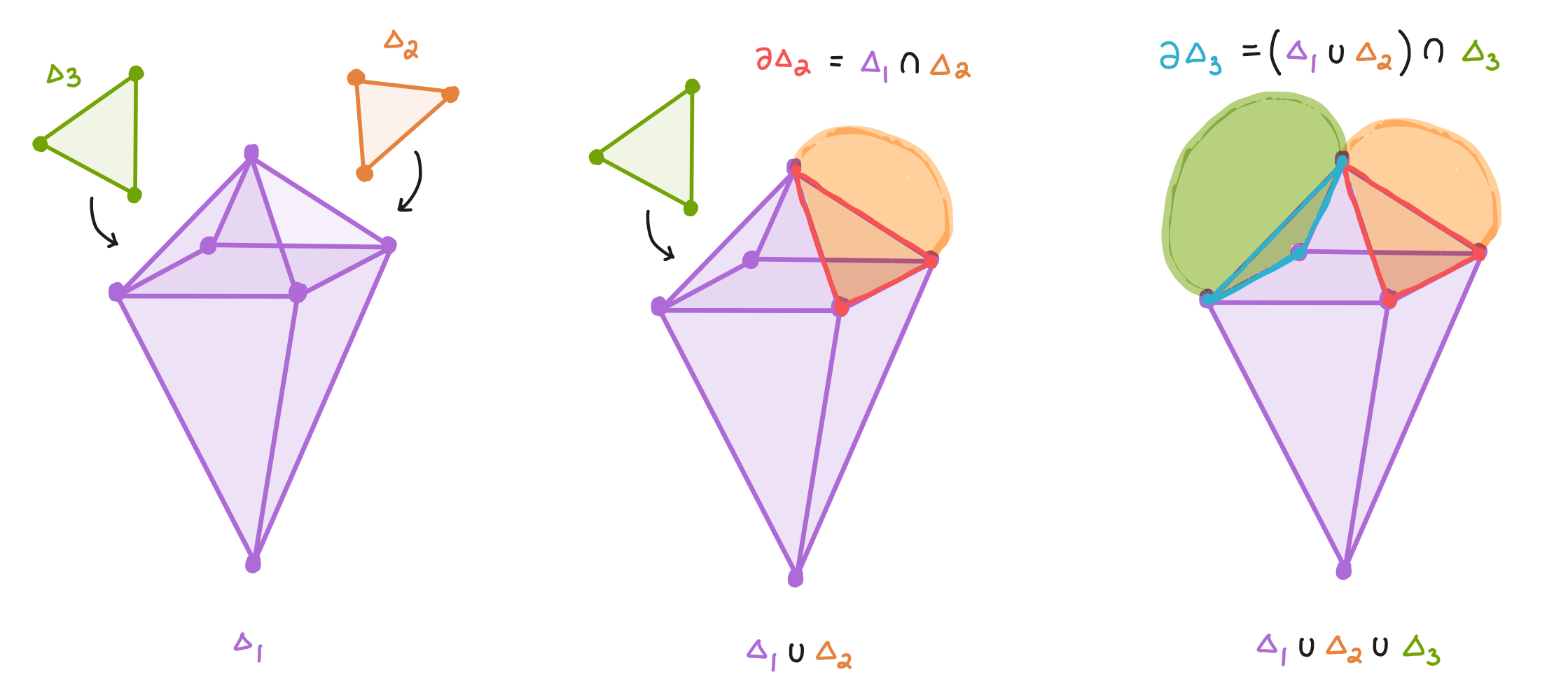}
    \caption{A convex ear decomposition of an ``ice cream cone,'' consisting of $\Delta_1$ (the boundary of the octahedron), and the ``ears'' $\Delta_2$ and $\Delta_3$, which are $2$-dimensional simplicial balls.}
\label{fig:ced}
\end{figure} 

Swartz \cite[Theorem~4.1]{swartz} proved that every simplicial complex $\Delta$ with a convex ear decomposition is doubly Cohen--Macaulay over $\Z$. Despite the fact that double Cohen--Macaulayness is a topological property, convex ear decompositions are easily seen to not be preserved under homeomorphisms.

Several classes of simplicial complexes are shown to be doubly Cohen--Macaulay via exhibiting a convex ear decomposition. Some examples related to matroids include the independence complex (if the matroid has no coloops) \cite{chari}, the Bergman complex \cite{nyman-swartz}, and the augmented Bergman complex \cite{athanasiadis-ferroni}.

A special case of convex ear decomposition is \emph{PS ear decomposition}, introduced by \cite{chari}. A PS\footnote{The terminology ``PS'' comes from the fact that products of simplices are involved in the definition.} ear decomposition is a convex ear decomposition in which $\Delta_1$ is the boundary of a simplicial polytope and each $\Delta_i$, for $i\geq 2$, is the join of a simplex and the boundary of a simplicial polytope. Chari proved in \cite[Proposition~5]{chari} that the existence of a PS ear decomposition guarantees shellability. Furthermore, in \cite[Theorem~2]{chari} he also showed that PS ear decompositions admit a recursive structure very similar to $k$-decomposability. 

\subsection{Face enumeration}
Let $\Delta$ be a finite simplicial complex of dimension $d-1$. For each integer $i\in \{0,1,\ldots,d\}$ let us denote by $f_i(\Delta)$ the number of simplices in $\Delta$ of cardinality $i$ (that is, of dimension $i-1$). 
The \emph{$f$-vector} of $\Delta$ is the $(d+1)$-tuple $(f_0(\Delta), f_1(\Delta), \dots, f_{d}(\Delta))$. The \emph{$h$-vector} of $\Delta$, denoted $(h_0(\Delta),\ldots,h_{d}(\Delta))$, is defined by the following equation
\[
  \sum_{i=0}^d f_{i}(\Delta)(y-1)^{d-i}
  \;=\;
  \sum_{i=0}^d h_i(\Delta) y^{d-i}.
\]
The $f$-polynomial $f(\Delta; x)$ and the $h$-polynomial $h(\Delta; y)$ are defined as 
\[
  f(\Delta; x) \coloneqq \sum_{i=0}^d f_{i} \: x^i,
  \quad \text{ and } \quad
  h(\Delta; y) \coloneqq \sum_{i=0}^d h_i \: y^i.
\]

The \emph{$g$-vector} of $\Delta$, denoted $(g_0(\Delta), \ldots, g_{\lfloor d/2\rfloor}(\Delta))$ is defined by $g_0(\Delta) = h_0(\Delta)$, and $g_i(\Delta) = h_{i}(\Delta)-h_{i-1}(\Delta)$ for $1\leqslant i \leqslant \lfloor d/2\rfloor$. The \emph{complementary vector} of $\Delta$, which we denote by $(c_0(\Delta), \ldots, c_{\lfloor d/2\rfloor}(\Delta))$ is defined by $c_i(\Delta) = h_{d-i}(\Delta)-h_i(\Delta)$ for each $0\leqslant i \leqslant \lfloor d/2\rfloor$.

A sequence $(h_0,h_1,\ldots,h_d)\in \mathbb{Z}^{d+1}$ is said to be an \emph{$M$-vector} if there exists a multicomplex $\Gamma$ such that, for every $0\leqslant i\leqslant d$, the number $h_i$ counts the faces of cardinality $i$ in $\Gamma$. A list of inequalities characterizing $M$-vectors was found by Macaulay (see~\cite[Theorem~II.2.2]{stanley-96}). 

The following is a classical result due to Stanley (see \cite[Theorem~II.3.3]{stanley-96}). 

\begin{thm}\label{thm:equiv-h-vector}
    Let $h=(h_0,\ldots,h_d)\in \mathbb{Z}^{d+1}$. The following are equivalent:
    \begin{enumerate}[\normalfont (i)]
        \item $h$ is an $M$-vector.
        \item $h$ is the $h$-vector of a Cohen--Macaulay complex.
        \item $h$ is the $h$-vector of a shellable complex.
    \end{enumerate}
\end{thm}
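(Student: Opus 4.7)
The plan is to prove the classical cycle of implications (iii)$\Rightarrow$(ii)$\Rightarrow$(i)$\Rightarrow$(iii). Each implication uses a distinct toolkit: a topological/inductive argument on shellings, a commutative-algebraic passage through Stanley--Reisner rings, and an explicit combinatorial construction that realizes an arbitrary $M$-vector as the $h$-vector of a shellable complex.

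For (iii)$\Rightarrow$(ii), I would invoke the classical fact, already summarized in Figure~\ref{fig:properties}, that shellable complexes are Cohen--Macaulay. The proof proceeds by induction on the length of a shelling $F_1, \ldots, F_k$: setting $\Delta_i \coloneqq \langle F_1, \ldots, F_i \rangle$, the shelling axiom ensures that $\Delta_i$ is obtained from $\Delta_{i-1}$ by attaching a simplex along a pure $(d-2)$-dimensional subcomplex of its boundary, which allows one to combine the induction hypothesis with a Mayer--Vietoris argument to verify Reisner's criterion for $\Delta_k = \Delta$.

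For (ii)$\Rightarrow$(i), I would pass to the Stanley--Reisner ring $K[\Delta] = K[x_v \mid v \in V]/I_\Delta$ and, after extending scalars to an infinite field if needed, choose a linear system of parameters $\theta_1, \ldots, \theta_d$. Cohen--Macaulayness guarantees that $K[\Delta]$ is a free module over $K[\theta_1, \ldots, \theta_d]$, so the Artinian quotient $A \coloneqq K[\Delta]/(\theta_1, \ldots, \theta_d)$ is a standard graded $K$-algebra whose Hilbert function is precisely the $h$-vector of $\Delta$. Since $A$ is generated in degree one, Macaulay's theorem on Hilbert functions of such algebras forces $(h_0, \ldots, h_d)$ to be an $M$-vector.

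The main obstacle is (i)$\Rightarrow$(iii), which requires producing a concrete shellable complex from purely numerical data. Given an $M$-vector realized by a multicomplex $\Gamma$ of monomials in variables $y_1, \ldots, y_n$, the strategy is to define a pure $(d-1)$-dimensional simplicial complex whose facets are in bijection with the monomials of $\Gamma$, each monomial of degree $j$ contributing a facet that extends a fixed base simplex by $j$ additional vertices encoding its exponent vector. Ordering the facets by a reverse-lexicographic rule on the corresponding monomials should yield a shelling, and a direct computation of the restriction map produced by this shelling will recover the prescribed $h$-vector. The delicate point is arranging the combinatorial data so that purity, the shelling axiom, and the correct restriction counts all hold simultaneously; Stanley's compressed-complex construction is the standard device for this.
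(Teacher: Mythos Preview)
The paper does not prove this theorem; it is stated as a classical result with a citation to Stanley's textbook \cite[Theorem~II.3.3]{stanley-96} and no argument is given. Your outline is correct and is essentially the standard proof found in that reference: the implication (ii)$\Rightarrow$(i) via an Artinian reduction of the Stanley--Reisner ring is exactly Stanley's argument, and (i)$\Rightarrow$(iii) is handled there by an explicit construction of a shellable complex (Stanley uses a compressed/rev-lex family of facets, as you suggest). So there is nothing to compare against in the paper itself, and your sketch faithfully reproduces the textbook route.
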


An elementary consequence of the above result is that the $f$-vector of a Cohen--Macaulay (or shellable) complex is strongly flawless (see, e.g., \cite[Prop.~7.2.5(i)]{bjorner}).

It is possible to add a fourth point to the above list of equivalent properties: ``$h$ is the $h$-vector of a vertex decomposable complex''. This follows from a result of Kalai: if $\Delta$ is a Cohen--Macaulay complex, applying an operation called (combinatorial) shifting to $\Delta$ yields a pure shifted complex $\Delta'$ with the same $h$-vector as $\Delta$ (see~\cite[Theorem~4.1]{kalai}); furthermore, any pure shifted complex is automatically vertex decomposable (see, e.g., \cite[Theorem~11.3]{BjornerWachs1997}). 

Thanks to the $g$-theorem for simplicial polytopes proved in \cite{stanley-g-theorem,billera-lee},  convex ear decompositions guarantee several inequalities for the $h$-vector of a complex.

\begin{thm}
    Let $\Delta$ be a $(d-1)$-dimensional complex admitting a convex ear decomposition. Then, the $h$-vector $h(\Delta) = (h_0,\ldots,h_d)$ is strongly flawless, i.e.,  
    \begin{equation}\label{eq:str-flawless} 
    h_0 \leqslant h_1 \leqslant \cdots \leqslant h_{\lfloor d/2\rfloor}\qquad \text{and}\qquad 
     h_i \leqslant h_{d-i}\qquad \text{for each $1\leqslant i \leqslant \lfloor d/2\rfloor$.}
    \end{equation}
    Moreover,
    the $g$-vector, $(h_0, h_1-h_0, h_2-h_1, \ldots, h_{\lfloor d/2 \rfloor} - h_{\lfloor d/2 \rfloor - 1})$,
    is an $M$-vector, and the complementary vector,
    $(h_d-h_0, h_{d-1}-h_{1}, \ldots, h_{\lceil d/2\rceil} -h_{\lfloor d/2\rfloor})$,
    is a sum of $M$-vectors.
\end{thm}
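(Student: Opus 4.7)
The statement asserts three distinct conclusions—strong flawlessness, $g$-vector an $M$-vector, and complementary vector a sum of $M$-vectors—each established elsewhere in the literature (\cite{chari}, \cite{swartz}, \cite{larson-stapledon}). My plan is to present a unified derivation by decomposing $h(\Delta, y)$ additively along the convex ear decomposition and then invoking Stanley's $g$-theorem on each piece.

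The first step is to derive an additive formula for the $h$-polynomial. Since $\Delta_k \cap (\Delta_1 \cup \cdots \cup \Delta_{k-1}) = \partial \Delta_k$ by definition, an inclusion--exclusion on faces gives $f(\Delta, x) = f(\Delta_1, x) + \sum_{k \geqslant 2}(f(\Delta_k, x) - f(\partial \Delta_k, x))$. Converting to $h$-polynomials and invoking Macdonald's reciprocity for a $(d-1)$-dimensional simplicial ball $B$---namely, $h(B, y) - (1-y)\, h(\partial B, y) = y^{d}\, h(B, y^{-1})$---yields the identity $h(\Delta, y) = h(\Delta_1, y) + \sum_{k=2}^{m} y^{d}\, h(\Delta_k, y^{-1})$; equivalently, coordinatewise $h_i(\Delta) = h_i(\Delta_1) + \sum_{k \geqslant 2} h_{d-i}(\Delta_k)$.

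With this formula in hand I would address the three assertions. For each ear $\Delta_k$ with $k \geqslant 2$, let $P_k$ be the ambient simplicial $d$-polytope (supplied by the definition of convex ear decomposition) and set $\Delta_k' = \overline{\partial P_k \setminus \Delta_k}$, which is itself a $(d-1)$-ball. The palindromicity of $h(\partial P_k, y)$ combined with the sphere-decomposition identity $h_i(\partial P_k) = h_i(\Delta_k') + h_{d-i}(\Delta_k)$ rewrites the increment $h_{d-i}(\Delta_k)$ as $h_i(\partial P_k) - h_i(\Delta_k')$. Applying the $g$-theorem (\cite{stanley-g-theorem, billera-lee}) to each $\partial P_k$, together with the closure of $M$-vectors under sums (via Macaulay's theorem), shows that the $g$-vector of $\Delta$ is an $M$-vector (\cite[Corollary~3.10]{swartz}), which yields the increasing portion of strong flawlessness. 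The remaining inequalities $h_i \leqslant h_{d-i}$ follow from palindromicity of $h(\Delta_1)$ together with $h_i(\Delta_k) \leqslant h_{d-i}(\Delta_k)$ for $i \leqslant d/2$; this is Chari's original argument (\cite[Corollary~2]{chari}). The complementary vector decomposes as a sum of contributions indexed by the ears, each an $M$-vector by the $g$-theorem applied to $\partial P_k$, yielding the sum-of-$M$-vectors statement of \cite[Section~4]{swartz} (or, more conceptually, of \cite{larson-stapledon}).

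The most delicate step is showing that the $g$-vector of $\Delta$ is an $M$-vector rather than merely nonnegative. This requires assembling the contributions from different ears---each arising from a different ambient polytope boundary $\partial P_k$---coherently, leveraging the shellability comparisons $0 \leqslant h_i(\Delta_k') \leqslant h_i(\partial P_k)$ and the \emph{properness} of $\Delta_k$ inside $\partial P_k$. This is where Swartz's bookkeeping is technically subtle, and it is also the place where simpler intuitions (e.g.\ just summing nonnegative pieces) fall short of the full Macaulay conclusion.
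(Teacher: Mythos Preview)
The paper does not supply its own proof of this theorem: immediately after the statement it records that ``the first part of the last result is proved in \cite[Corollary~2]{chari}'' and ``the second part was established by Swartz in \cite{swartz}.'' So you are not competing with an argument in the paper but with the cited literature, and the skeleton of your sketch matches it. The additive identity $h_i(\Delta)=h_i(\Delta_1)+\sum_{k\geqslant 2}h_{d-i}(\Delta_k)$, obtained from inclusion--exclusion on faces together with the ball reciprocity $h(B,y)-(1-y)h(\partial B,y)=y^{d}h(B,y^{-1})$, is exactly Chari's mechanism, and combined with the Dehn--Sommerville relations for $\Delta_1$ and the nonnegativity of $h(\Delta_k)$ it gives the strongly flawless inequalities just as you describe. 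Your treatment of the complementary vector is likewise in line with \cite[Section~4]{swartz}.

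One point deserves correction. In your third paragraph you assert that ``closure of $M$-vectors under sums \ldots shows that the $g$-vector of $\Delta$ is an $M$-vector.'' Carrying out your own decomposition gives
\[
g_i(\Delta)=g_i(\partial P_1)+\sum_{k\geqslant 2}\bigl(g_i(\partial P_k)-g_i(\Delta_k')\bigr),
\]
and the subtracted terms $g_i(\Delta_k')$ block any direct ``sum of $M$-vectors'' conclusion. You rightly flag this step as delicate in your final paragraph, but the mechanism you gesture at is not the one that succeeds: Swartz's proof of \cite[Corollary~3.10]{swartz} is algebraic, not combinatorial bookkeeping. He shows that for a complex with a convex ear decomposition the Artinian reduction $\kk[\Delta]/\Theta$ admits a weak Lefschetz element (multiplication by a generic linear form is injective from degree $i-1$ to degree $i$ for $i\leqslant \lfloor d/2\rfloor$), and the $M$-vector property of the $g$-vector then follows from Macaulay's theorem applied to the quotient by that element. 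Your outline correctly locates the difficulty but misidentifies how it is resolved.
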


The first part of the last result is proved in \cite[Corollary~2]{chari}. The second part was established by Swartz in \cite{swartz}. 

\begin{remark}
    By using ideas related to the $g$-conjecture for simplicial spheres, Adiprasito, Papadakis, and Petrotou  announced a result \cite[Corollary~3.2]{adiprasito-papadakis-petrotou} that implies that the first part of the above theorem continues to hold for doubly Cohen--Macaulay complexes. A different proof is provided in a recent paper by Larson and Stapledon, which in addition implies a stronger version of the second part of the above statement for doubly Cohen--Macaulay complexes over a field of characteristic $2$ \cite[Corollary~1.7]{larson-stapledon}. Furthermore, Larson and Stapledon provide a characterization of all complementary vectors of simplicial complexes admitting a convex ear decomposition, and deduce additional inequalities for $h$-vectors of doubly Cohen--Macaulay complexes over a field of characteristic $2$ \cite[Corollary~4.5]{larson-stapledon}.
\end{remark}

\subsection{Admissible lattices}\label{subsec:admissible-lattices}

In this section, we introduce the class of posets which we will be focusing on throughout this article. We write $\leqslant$ for the partial order on a poset and $\lessdot$ for its covering relation. We refer to \cite{wachs2007poset} for background on posets. 

\begin{definition}
    A finite poset $\L$ is called a \textit{lattice} if every pair of elements $F_1, F_2 \in \L$ has a supremum, denoted $F_1\vee F_2$, and an infimum, denoted $F_1 \wedge F_2$.
\end{definition}

Since $\L$ is finite, the existence of suprema and infima for pairs implies the existence of suprema and infima for any subset $S \subset \L,$ denoted by $\bigvee S$ and $\bigwedge S$ respectively. In particular we denote $\un = \bigvee \L$ which is an upper bound of $\L$ and $\zero = \bigwedge \L$ which is a lower bound of $\L.$ An element in a lattice $\L$ is called \emph{join-irreducible} if it cannot be written as a join of strictly smaller elements. The set of join-irreducible elements of $\L$ will be denoted by $\Irr(\L)$. If we have a map $\omega: \Irr(\L) \rightarrow \Z_{>0}$, then $\omega$ induces a labeling $\lambda_{\omega}:\Edge(\L) \rightarrow \Z_{>0}$ of the set $\Edge(\L)$ of covering relations of $\L$, defined by 
\[
\lambda_{\omega}(F_1 \lessdot F_2) \coloneqq \min \, \{\omega(I) \, | \, I \in \Irr(\L),\, F_1 \vee I = F_2\}. 
\]

Recall that an edge-labeling is an \emph{R-labeling} if for any two elements $F_1 \leqslant F_2$ there exists a unique saturated chain between $F_1$ and $F_2$ with weakly increasing labels. The following definition is due to Stanley \cite{stanley-74}.

\begin{definition}
The map $\omega$ is called \textit{admissible} if the labeling $\lambda_{\omega}$ is an $R$-labeling. A graded lattice is called \textit{admissible} if it admits an admissible map. 
\end{definition}

Stanley \cite{stanley-74} proved that when $\L$ is semimodular, any injective map $\Irr(\L) \hookrightarrow \Z_{>0}$ is admissible. In particular, this result holds for geometric lattices. Björner \cite[Section 3]{bjorner-el-shellable} proved that if a map $\omega:\Irr(\L) \rightarrow \Z_{>0}$ is admissible, then $\lambda_{\omega}$ is in fact an EL-labeling. This in turn implies that the order complex $\Delta(\L\setminus \{\zero, \un\})$ is vertex decomposable \cite[Theorem 11.6]{BjornerWachs1997}.

\subsection{Building sets and nested sets}\label{subsec:prelim-building-sets}
In this subsection, we recall the basics of the main objects of this work: building sets and nested sets in lattices, originally introduced in \cite{feichtner-kozlov}. 

For any subset $\G \subseteq \L$ and any $F \in \L$, let $\G_{\leqslant F}$ be the collection of elements in $\G$ which are less than or equal to $F$. 
\begin{definition}\label[definition]{def:building-set}
    A subset $\G \subset \L\setminus \{\zero\}$ is called a \emph{building set} of $\L$ if for any $F \in \L$, the join map 
    \[
    \psi_{F} \colon \prod_{G \in \max \G_{\leqslant F}}[\zero, G] \xrightarrow{\sim} [\zero, F], \quad (H_G)_G \mapsto \bigvee_G H_G
    \] is an isomorphism of posets. The isomorphism $\psi_F$ is called the \emph{structural isomorphism} of $\G$ at $F$. If $\G$ is a building set, then the elements of $\max \G_{\leqslant F}$ are called the \emph{$\G$-factors} of $F$. A pair $(\L, \G)$ with $\G$ a building set of a lattice $\L$ is called a \emph{built lattice}. A built lattice $(\L, \G)$ such that $\L$ is graded is called a \textit{graded built lattice}. 
\end{definition}

\begin{example}
    For any lattice $\L$, the subset $\L\setminus \{\zero\}$ is a building set of $\L$, called the \emph{maximal building set} of $\L$ and denoted by $\G_{\max}.$ At the other extreme, the set $\G_{\min}$ of irreducible elements of $\L$, that is, the elements $G\in \L$ such that $[\zero, G]$ is not a product of proper subposets, is a building set of $\L$, called the \emph{minimal building set} of $\L$.
\end{example}

\begin{remark}\label{rmk:irreducible-in-building}
    If $(\L, \G)$ is a built lattice, the isomorphism given by the join map in \Cref{def:building-set} implies that every element of $\L$ is the join of its $\G$-factors. This implies that $\G$ must contain every join-irreducible element of $\L$, and in particular, must contain every atom of $\L.$
\end{remark}

The following lemma about factors in building sets will be useful later on.

\begin{lemma}\label{lem:factor-union}
    Let $(\L, \G)$ be a built lattice, and let $F$ be some element of $\L$ with factors $F_1,\ldots, F_k,$ giving the isomorphism $\psi_F \colon \prod_{1\leqslant i \leqslant k}[\zero, F_i] \simeq [\zero, F].$ For every $H \leqslant F,$ if we denote $(H_i)_{1 \leqslant i\leqslant k} \coloneqq \psi^{-1}(H)$, then the set of $\G$-factors of $H$ is the disjoint union of the sets of $\G$-factors of the $H_i$'s
    \[
    \max \G_{\leqslant H} = \bigsqcup_{1\leqslant i \leqslant k} \max \G_{\leqslant H_i}.
    \]
\end{lemma}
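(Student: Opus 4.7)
The strategy is to use the structural isomorphism $\psi_F$ to identify each $H \leqslant F$ with the tuple $(H_1,\ldots,H_k)$ and then analyze how elements of $\G$ sitting below $F$ must align with the factors $F_1,\ldots,F_k$. The single combinatorial engine driving everything is an \emph{orthogonality} observation: any element $G \in \G$ with $G \leqslant F$ lies below a unique $F_j$. Once this is established, the claimed decomposition of $\max \G_{\leqslant H}$ follows by routine manipulations.

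First I would establish the orthogonality observation. For existence, I use that $\G_{\leqslant F}$ is a finite poset, so every element lies below some maximal element, i.e., some $F_j$. For uniqueness, if $G \leqslant F_i$ and $G \leqslant F_j$ with $i \neq j$, then under $\psi_F^{-1}$ the elements $F_i$ and $F_j$ correspond to the tuples with $F_i$ (resp. $F_j$) in a single coordinate and $\zero$ elsewhere, so $F_i \wedge F_j = \psi_F((\zero,\ldots,\zero)) = \zero$. Then $G \leqslant \zero$ contradicts $G \in \G \subseteq \L \setminus \{\zero\}$. The same computation gives that for any $H \leqslant F$ with $\psi_F^{-1}(H) = (H_1,\ldots,H_k)$, one has $H \wedge F_i = H_i$ (by applying $\psi_F^{-1}$ coordinatewise).

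Next I would prove the two containments. For ``$\supseteq$'', take $G \in \max \G_{\leqslant H_i}$ and any $G' \in \G_{\leqslant H}$ with $G \leqslant G'$. Since $G' \leqslant F$, orthogonality produces a unique $j$ with $G' \leqslant F_j$. But $G \leqslant G' \leqslant F_j$ and $G \leqslant F_i$, so orthogonality applied to $G$ forces $i=j$. Then $G' \leqslant H \wedge F_i = H_i$, so $G' \in \G_{\leqslant H_i}$, and maximality of $G$ there gives $G = G'$; hence $G \in \max \G_{\leqslant H}$. For ``$\subseteq$'', take $G \in \max \G_{\leqslant H}$ and let $F_i$ be the unique factor with $G \leqslant F_i$. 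Then $G \leqslant H \wedge F_i = H_i$, so $G \in \G_{\leqslant H_i}$; any $G'' \in \G_{\leqslant H_i}$ above $G$ is automatically in $\G_{\leqslant H}$, forcing $G = G''$, so $G \in \max \G_{\leqslant H_i}$.

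Finally, disjointness of the union is immediate from the orthogonality observation applied to any $G$ belonging to both $\max \G_{\leqslant H_i}$ and $\max \G_{\leqslant H_j}$ with $i \neq j$: such $G$ would satisfy $G \leqslant F_i \wedge F_j = \zero$. The main conceptual step is the orthogonality observation, which ultimately reduces to the product structure of $\psi_F$; I do not expect any genuine obstacle beyond being careful with the identification between $[\zero,F]$ and $\prod_i [\zero,F_i]$.
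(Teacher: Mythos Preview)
Your proof is correct and follows essentially the same approach as the paper's: both hinge on the orthogonality observation that any $G \in \G$ with $G \leqslant F$ lies below a unique factor $F_j$, and then chase maximality in both directions. The only cosmetic difference is that where the paper uses the join structure (injectivity of $\psi_F$ applied to $(H_1 \vee G, H_2,\ldots,H_k)$ to force $G \leqslant H_1$), you use the dual meet identity $H \wedge F_i = H_i$; these are interchangeable consequences of the product structure.
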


\begin{proof}
    Let $G$ be a $\G$-factor of $H$. Then there is a unique factor of $F$, say $F_1,$ with $G \leqslant F_1$. Since
    \[
    \psi(H_1 \vee G, H_2, \ldots, H_k) = \psi(H_1, H_2, \ldots, H_k) = H,
    \] and $\psi$ is injective, we must have $G \leqslant H_1,$ which implies that $G$ is a factor of $H_1.$ 

    Let $G$ be a $\G$-factor of $H_i$ for some $1 \leqslant i \leqslant k,$ and let $G' \in \G$ be such that $G \leqslant G' \leqslant H$. Since $G' \leqslant F$, the element $G'$ must be below some unique factor of $F$, and since $G\leqslant G'$ that unique factor must be $F_i$. By the same argument as above, $G'$ must then be below $H_i.$ Since $G$ is a factor of $H_i$, we must have $G = G'$, which implies that $G$ is a factor of $H.$
\end{proof}
\begin{definition}
    Let $(\L, \G)$ be a built lattice. A subset $\S \subset \G$ is a \emph{nested set} if for any antichain $\A $ of cardinality at least $2$, the join $\bigvee \A$ does not belong to $\G.$ The collection of nested sets of a built lattice $(\L, \G)$ forms an abstract simplicial complex with vertex set $\G$, which is denoted by $c\N(\L, \G).$ We denote the restriction to nonmaximal elements of $\G$ by $\N(\L, \G) \coloneqq c\N(\L, \G)\setminus \max \G.$ 
\end{definition}

\begin{example}\label{ex:nested-max}
    For any lattice $\L$, in the case of the maximal building set $\G_{\max} = \L\setminus \{\zero\},$ a subset $\S \subset \L\setminus \{\zero\}$ is nested if and only if it is a chain. In other words, the nested set complex $\N(\L, \G_{\max})$ is the order complex of $\L \setminus \{\zero, \un\}$, and the nested set complex $c\N(\L, \G_{\max})$ is the order complex of $\L\setminus \{\zero\}$. 
\end{example}

The notation $c\N(\L,\G)$ is motivated by the following lemma, which shows that we have a notion of restriction of a built lattice. Let $(\L, \G)$ be a built lattice. For any $F \in \L$, we define the \emph{restriction} of $\G$ and $\L$ at $F$ as 
\begin{align*}
    \G^F &\coloneqq \G \cap [\zero, F], \\
    \L^F &\coloneqq [\zero, F]. 
\end{align*} 
\begin{lemma}\label{lem:reducible}
For any built lattice $(\L, \G)$ and any $F \in \L\setminus\{\zero\}$, the subset $\G^F$ is a building set of the lattice $\L^{F}.$ If $G_1, \ldots, G_k$ are the maximal elements of $\G,$ then we have an isomorphism of nested set complexes
\begin{equation}\label{eq:nested-set-cone}
c\N(\L,\G) \simeq \Cone(\N(\L^{G_1}, \G^{G_1}))* \cdots * \Cone(\N(\L^{G_k}, \G^{G_k})).
\end{equation}
\end{lemma}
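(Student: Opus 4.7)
The first assertion is essentially automatic. For any $H \in \L^F = [\zero, F]$, the condition $H \leqslant F$ forces $\G^F_{\leqslant H} = \G_{\leqslant H}$, so the structural isomorphism $\psi_H$ of $(\L, \G)$ at $H$ serves equally well as the structural isomorphism of $(\L^F, \G^F)$ at $H$. This will be a one-line verification.

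For the second statement, my plan is to partition the vertex set $\G$ according to which maximal element $G_i$ each vertex lies below, and then show that nestedness respects this partition. The key input is the structural isomorphism at the top element, $\psi_{\un} \colon \prod_{i=1}^k [\zero, G_i] \xrightarrow{\sim} \L$ (using that $\max \G_{\leqslant \un} = \{G_1, \ldots, G_k\}$). I would first apply \Cref{lem:factor-union} to $\un$ to prove that every $G \in \G$ lies below exactly one $G_i$: since $G$ is always a $\G$-factor of itself, that lemma forces $G$ to appear as a $\G$-factor of a unique coordinate of $\psi_{\un}^{-1}(G)$, so all other coordinates must be $\zero$, and hence $G$ lies below exactly one $G_i$. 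This produces a disjoint decomposition $\G = \bigsqcup_i \G^{G_i}$, and correspondingly $\S = \bigsqcup_i \S_i$ with $\S_i \coloneqq \S \cap \G^{G_i}$ for each $\S \subseteq \G$.

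The main claim I would then verify is that $\S$ is nested in $(\L, \G)$ if and only if each $\S_i$ is nested in $(\L^{G_i}, \G^{G_i})$. I would argue by cases on an antichain $\A \subseteq \S$ of size at least two. If $\A$ is entirely contained in a single block $\S_i$, then $\bigvee \A \leqslant G_i$, so $\bigvee \A \in \G$ if and only if $\bigvee \A \in \G^{G_i}$; this is exactly what nestedness of $\S_i$ within the restricted built lattice controls. If instead $\A$ has elements in two distinct blocks, then by the uniqueness from the previous step the join $\bigvee \A$ cannot lie below any single $G_i$, hence cannot belong to $\G$ at all. This case analysis gives the required bijection on faces and yields an isomorphism $c\N(\L, \G) \simeq c\N(\L^{G_1}, \G^{G_1}) \ast \cdots \ast c\N(\L^{G_k}, \G^{G_k})$ of simplicial complexes.

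Finally, I would identify each factor $c\N(\L^{G_i}, \G^{G_i})$ with the cone $\Cone(\N(\L^{G_i}, \G^{G_i}))$ having apex $G_i$. Indeed, $G_i$ is the unique maximal element of $\G^{G_i}$, so $\N(\L^{G_i}, \G^{G_i}) = c\N(\L^{G_i}, \G^{G_i}) \setminus \{G_i\}$; and since $G_i$ is an upper bound for every other element of $\G^{G_i}$, it can never participate in an antichain of size at least two, so $T \cup \{G_i\}$ is nested if and only if $T \subseteq \G^{G_i} \setminus \{G_i\}$ is nested. Substituting this identification into the join decomposition above yields the claimed isomorphism \eqref{eq:nested-set-cone}. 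I expect the most delicate step to be the mixed-block case in the nestedness analysis, where the uniqueness of the block assignment (a direct consequence of \Cref{lem:factor-union}) is the essential ingredient preventing joins across blocks from landing back in $\G$.
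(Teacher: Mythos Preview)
Your proposal is correct and follows essentially the same route as the paper: establish the cone identification $c\N(\L^{G_i},\G^{G_i})\simeq\Cone(\N(\L^{G_i},\G^{G_i}))$ via the observation that $G_i$ can be freely added or removed from any nested set in $\G^{G_i}$, and then verify that $\bigsqcup_i \S_i$ is nested in $(\L,\G)$ if and only if each $\S_i$ is nested in $(\L^{G_i},\G^{G_i})$ by analyzing antichains according to whether they lie in one block or several. The only differences are cosmetic: the paper cites an external reference for the first assertion whereas you verify it directly, and you invoke \Cref{lem:factor-union} to make the disjointness $\G=\bigsqcup_i \G^{G_i}$ explicit, while the paper leaves this implicit in the structural isomorphism at $\un$.
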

\begin{proof}
The first statement is a special case of \cite[Lemma 2.8.5]{BDF_2022}. For the second statement, first notice that for $1 \leqslant i \leqslant k$, a subset $\S \subset \G^{G_i}$ is nested if and only if $\S \cup \{G_i\}$ is nested, and so we get an isomorphism $c\N(\L^{G_i}, \G^{G_i}) \simeq \Cone(\N(\L^{G^i}, \G^{G^i})).$ To prove the isomorphism \eqref{eq:nested-set-cone}, it remains to prove that for every family of sets $(\S_i)_{1\leqslant i \leqslant n}$ with $\S_i \subset \G^{G_i}$, the subset $\bigsqcup_i \S_i \subset \G$ is nested if and only if $\S_i$ is nested for $1 \leqslant i \leqslant k$. If each $\S_i$ is nested, let $\A \subset \bigsqcup_i \S_i$ be an antichain of cardinality at least $2.$ If $\bigvee \A \in \G$, then $\bigvee \A \leqslant G_i$ for some $i$, but then we have $\A \subset \S_i$, which contradicts the fact that $\S_i$ is nested. The other implication is immediate. 
\end{proof}
The above lemma will allow us to restrict to the study of nested set complexes associated to built lattices $(\L,\G)$ such that $\un \in \G.$ Those built lattices will be called \emph{irreducible.} 

The next lemma shows that we also have a notion of contraction of a built lattice. For any $F \in \L$, we define the \emph{contraction} of $\G$ and $\L$ at $F$ as 
\begin{align*}
\G_F &\coloneqq \{ F \vee G \, | \, G \in \G  \} \setminus \{ F \}, \\
\L_F &\coloneqq [F, \un]. 
\end{align*} 

\begin{lemma}[{\cite[Lemma 2.8.5]{BDF_2022}}]
Let $(\L, \G)$ be a built lattice. For any $F \in \L$, the set $\G_F$
is a building set of the lattice $\L_F$. 
\end{lemma}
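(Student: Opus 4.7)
The plan is to fix an arbitrary element $H \in \L_F$ and verify that the join map
\[
\prod_{G' \in \max (\G_F)_{\leqslant H}} [F, G'] \longrightarrow [F, H]
\]
is a poset isomorphism. The strategy is to transfer this problem through the structural isomorphism that already exists for $(\L,\G)$ at $H$ in the original built lattice, which reduces the verification to a near-tautology once the product decomposition is set up correctly.

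First I would view $H$ as an element of $\L$ and let $G_1, \ldots, G_k$ denote its $\G$-factors, yielding the structural isomorphism $\psi_H \colon \prod_{i=1}^{k} [\zero, G_i] \xrightarrow{\sim} [\zero, H]$. Writing $\psi_H^{-1}(F) = (F_1, \ldots, F_k)$ with each $F_i \leqslant G_i$, this isomorphism restricts to
\[
[F, H] \;\cong\; \prod_{i=1}^{k} [F_i, G_i].
\]

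Next I would identify the elements of $(\G_F)_{\leqslant H}$. An element of $\G_F$ lying in $[F, H]$ has the form $F \vee G$ for some $G \in \G$ with $G \leqslant H$ and $G \not\leqslant F$. Any such $G$ lies below a unique factor $G_i$, since the $G_i$ are the maximal $\G$-elements below $H$; via $\psi_H^{-1}$, the element $F \vee G$ then corresponds to the tuple whose $j$-th coordinate equals $F_j$ for $j \neq i$ and $F_i \vee G$ in position $i$. For each fixed $i$ the supremum over admissible $G$ is achieved by $G = G_i$, and this yields a genuine element of $\G_F$ precisely when $F_i < G_i$. Hence
\[
\max (\G_F)_{\leqslant H} \;=\; \{F \vee G_i : i \in I\}, \qquad I \coloneqq \{i : F_i < G_i\}.
\]

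Finally, under $\psi_H^{-1}$ each interval $[F, F \vee G_i]$ with $i \in I$ identifies with $\{F_1\} \times \cdots \times [F_i, G_i] \times \cdots \times \{F_k\} \cong [F_i, G_i]$, while each factor $[F_j, G_j]$ for $j \notin I$ is a singleton. Consequently the join map $\prod_{i \in I}[F, F \vee G_i] \to [F, H]$ corresponds under these product isomorphisms to the identity on $\prod_{i=1}^{k}[F_i, G_i]$, and is therefore a poset isomorphism. I expect the main obstacle to be the bookkeeping required to match the $\G_F$-factors of $H$ with the $\G$-factors of $H$; once this is handled using \cref{lem:factor-union} together with the structural isomorphism $\psi_H$, the argument collapses to verifying the identity on a product of intervals.
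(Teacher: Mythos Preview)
The paper does not give its own proof of this lemma; it is simply quoted from \cite[Lemma~2.8.5]{BDF_2022}. Your argument is correct and is the natural one: pulling the interval $[F,H]$ back through the structural isomorphism $\psi_H$ of $(\L,\G)$ at $H$, identifying $\max(\G_F)_{\leqslant H}$ with the set $\{F\vee G_i : F_i < G_i\}$, and observing that under these identifications the join map becomes the identity on $\prod_i[F_i,G_i]$ is exactly the standard verification of the building-set axiom for the contraction.
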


We have the following crucial lemma about the general structure of nested sets. 
\begin{lemma}\label{lem:nested-tree}
Let $\S$ be a nested set of a built lattice $(\L, \G).$ The set $\S$ is a tree in the order graph of $\L$: for every $F \in \L\setminus \{\zero\},$ the set $\S_{>F} = \{ G \in \S \, | \, G > F\}$ has a unique minimum.
\end{lemma}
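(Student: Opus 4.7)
The plan is to argue by contradiction: assume some $F \in \L \setminus \{\zero\}$ admits two distinct minimal elements $G_1, G_2$ in $\S_{>F}$. By minimality these are incomparable, so $\{G_1, G_2\}$ is an antichain of cardinality two in $\S$, and the defining property of a nested set forces $H \coloneqq G_1 \vee G_2 \notin \G$.

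First, I would analyze the $\G$-factors of $H$ via the structural isomorphism $\psi_H \colon \prod_{K \in \max \G_{\leqslant H}}[\zero, K] \xrightarrow{\sim} [\zero, H]$ from \Cref{def:building-set}. Since $H \notin \G$, the set $\max \G_{\leqslant H}$ cannot be a singleton: otherwise, if $K$ were the unique factor, $\psi_H$ would identify $[\zero,K]$ with $[\zero,H]$, forcing $K = H \in \G$. So $H$ has at least two distinct $\G$-factors $K_1, \ldots, K_k$. A key observation is that each nonzero element $G \in \G$ with $G \leqslant H$ lies below a \emph{unique} $\G$-factor of $H$: if one writes $\psi_H^{-1}(G) = (G^{(1)}, \ldots, G^{(k)})$, then $G \leqslant K_j$ translates, via $\psi_H^{-1}$ and componentwise order, to $G^{(i)} = \zero$ for $i \neq j$, so two distinct indices $j$ would force $G = \zero$. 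Applying this to $G_1, G_2 \in \G \setminus \{\zero\}$, we may assume $G_1 \leqslant K_1$ and $G_2 \leqslant K_j$ for a unique $j$. If $j = 1$ then $H = G_1 \vee G_2 \leqslant K_1$, forcing $H = K_1 \in \G$, a contradiction; hence $G_2 \leqslant K_2$ with $K_2 \neq K_1$.

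Finally, I would push $F$ through the same isomorphism. Writing $\psi_H^{-1}(F) = (F_1, \ldots, F_k)$ with $F_i \leqslant K_i$, and noting that $\psi_H^{-1}(G_1) = (G_1, \zero, \ldots, \zero)$ and $\psi_H^{-1}(G_2) = (\zero, G_2, \zero, \ldots, \zero)$, the inequality $F \leqslant G_1$ forces $F_i = \zero$ for all $i \neq 1$, while $F \leqslant G_2$ forces $F_i = \zero$ for all $i \neq 2$. Combining both, every $F_i = \zero$, hence $F = \zero$, contradicting $F \in \L \setminus \{\zero\}$.

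The main obstacle is the bookkeeping with the structural isomorphism, specifically the observation that any $\G$-element below $H$ is concentrated in a single $\G$-factor coordinate. Once this is in place, the contradiction essentially writes itself: the two competing minima of $\S_{>F}$ must live in \emph{different} factors of $G_1 \vee G_2$ (otherwise their join would lie in $\G$), and no strictly positive $F$ can simultaneously sit below elements in two independent factors.
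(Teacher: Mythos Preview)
Your proof is correct and takes essentially the same approach as the paper: argue by contradiction, and use the structural isomorphism at $G_1 \vee G_2$ to force $F = \zero$. If anything, you are more careful than the paper, which asserts without justification that $G_1, G_2$ are precisely the $\G$-factors of $G_1 \vee G_2$, whereas you only need (and verify) that they lie below \emph{distinct} factors.
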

\begin{proof}
    Suppose for contradiction that $\min \S_{>F}$ has at least two elements, say $G_1$ and $G_2$. Since $\S$ is nested, $G_1, G_2$ are the factors of $G_1\vee G_2$ in $\G$. Then the fact that $F$ belongs to both $[\zero, G_1]$ and $[\zero, G_2]$ contradicts the structural isomorphism of $\G$ at $F$
    \[
    \psi_F \colon [\zero, G_1]\times [\zero, G_2] \xrightarrow{\sim} [\zero, G_1 \vee G_2].
    \qedhere \]
\end{proof}

Roughly speaking, part of the difficulty in passing from order complexes to more general nested set complexes is to understand how to pass from linear trees (chains of elements) to general trees (nested sets in built lattices). 

The rest of this section will be devoted to describing the links of nested set complexes. As mentioned in Example \ref{ex:nested-max}, in the case of the maximal building set the nested set complex is simply the order complex. In that case, for any chain $\S = F_1< \cdots < F_k$ in some lattice $\L$, the data of a chain in $\L$ containing $\S$ is equivalent to the data of a chain in each interval $[F_i, F_{i+1}],$ or put differently the link of the order complex at $\S$ decomposes as the join of the order complexes of each interval. We will see that this result holds more generally for any built lattice. We start with the following definition. 

\begin{definition}
Let $(\L, \G)$ be a built lattice, $\S$ a nested set of $(\L, \G)$ and $G$ an element of $\S$.  The \emph{local interval of $\S$ at $G$}, denoted $L^G({\S}),$ is the built lattice 
\[
L^G(\S) \coloneqq \left(\left[\bigvee \S_{< G}, G\right], \G_{\bigvee \S_{< G}}^G\right).
\]
The join of all elements in $\S$ strictly below $G$ is denoted as $J^G \coloneqq \bigvee \S_{< G}$. 
\end{definition}
\begin{example}
If $\G$ is the maximal building set, then $\S$ is some chain $G_1 < \cdots < G_k$ and $L^{G_i}(\S) = ([G_{i-1}, G_{i}], \G_{\max})$ for $1 \leqslant  i \leqslant k+1$, with the convention that $G_0 = \zero$ and $G_{k+1} = \un.$
\end{example}

The following theorem provides a decomposition of the link of a nested set complex as a join of nested set complexes of local intervals. For proofs, see \cite[Section 2.3]{coron2024matroidsfeynmancategorieskoszul} (stated for geometric lattices but hold verbatim for any lattice) and \cite[Section 2]{BDF_2022}. 

\begin{thm}\label{thm:composition-nested-sets}
    Let $(\L, \G)$ be an irreducible built lattice. 
    \begin{enumerate}[\normalfont (i)]
        \item For any $F \in \L\setminus \{\un\}$, and any $G\in \G_F$, the element $G$ has a unique $\G$-factor which is not a $\G$-factor of $F$, denoted by $G/F$.\footnote{In \cite{coron2024matroidsfeynmancategorieskoszul}, $G/F$ is denoted as $\Comp_{F}(G)$.}
        \item For any nested set $\S \in \N(\L, \G),$ and for any collection of nested sets $(\S_G)_{G \in \S \cup \un}$ with $\S_G \in \N(L^G(\S))$ for every $G \in \S\cup \un$, then the collection 
        $$ \S \circ (\S_G)_G \coloneqq \S \cup \bigcup_{G \in \S \cup \un} \{ \, G'/J^G \, | \, G' \in \S_G \, \} $$
        is a nested set of $(\L, \G)$. \footnote{The notation $\circ$ comes from the fact that this operation can be interpreted as some composition of morphisms in a certain Feynman category (see \cite[Section 3]{coron2024matroidsfeynmancategorieskoszul} for more details).}
        \item Using the same notations as above, for any $G \in \S\cup \un$, and any $G'\in \S_G$, we have an isomorphism of built lattices 
        \[
        L^{G'/J^G}(\S\circ(\S_G)_G)\simeq L^{G'}(\S_G),
        \]
        which is given by taking the join with all the factors of $G'$ which are not below $G'/J^G$. 
        \item For any nested set $\S \in \N(\L, \G)$, the map $\S \circ -$ establishes a bijection between the collections of nested sets $(\S_G)_{G\in \S \cup \{\un\}}$ where $\S_G \in \N(L^G(\S))$ for any $G \in \S \cup \un$, and the nested sets of $(\L, \G)$ containing $\S.$ In other words, we have an isomorphism of simplicial complexes 
        \[
        \lk_{\N(\L, \G)}(\S) \simeq \bigast_{G \in \S \cup \un } \N(L^G(\S)).
        \]
    \end{enumerate}
\end{thm}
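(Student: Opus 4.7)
The plan is to prove the four parts in order, leveraging Lemma~\ref{lem:factor-union} and Lemma~\ref{lem:nested-tree} at each key step. For (i), since $G \in \G_F$ with $G \neq F$, write $G = F \vee H$ for some $H \in \G$, and factor $G = G_1 \vee \cdots \vee G_m$ into its $\G$-factors. The structural isomorphism at $G$ forces pairwise ``independence'' $G_i \wedge G_k = \zero$ for $i \neq k$, so $H \in \G$ lies below a unique factor, say $G_j$. Under $\psi_G^{-1}$ the element $F$ decomposes as $(F_1, \ldots, F_m)$ with $F_i \leqslant G_i$, and $F \vee H = G$ combined with $H \leqslant G_j$ forces $F_i = G_i$ for $i \neq j$ and $F_j < G_j$. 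Lemma~\ref{lem:factor-union} then gives $\max \G_{\leqslant F} = \{G_i : i \neq j\} \sqcup \max \G_{\leqslant F_j}$; since $F_j < G_j$, the factor $G_j$ does not belong to $\max \G_{\leqslant F_j}$, and so $G_j$ is the unique $\G$-factor of $G$ that is not a $\G$-factor of $F$. Set $G/F := G_j$.

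For (ii), I would show that no antichain $\A \subseteq \S \circ (\S_G)_G$ with $|\A| \geqslant 2$ satisfies $\bigvee \A \in \G$. Using Lemma~\ref{lem:nested-tree}, group the elements of $\A$ by the unique minimal element of $\S \cup \{\un\}$ lying above them. If all of $\A$ falls into a single group indexed by some $G \in \S \cup \{\un\}$, then its elements take the form $G'/J^G$ for various $G' \in \S_G$; part (i) together with the structural isomorphism lifts $\A$ to an antichain in $\S_G$ whose join lies in $\G_{J^G}^G$, contradicting the nestedness of $\S_G$. If $\A$ spans several groups, then $\bigvee \A$ strictly exceeds some $G \in \S$, and applying Lemma~\ref{lem:factor-union} to the $\G$-factorization of $\bigvee \A$ produces a smaller witness that reduces to one of the earlier cases or directly contradicts the nestedness of $\S$ itself.

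For (iii), the isomorphism $L^{G'/J^G}(\S \circ (\S_G)_G) \simeq L^{G'}(\S_G)$ is induced by the map $x \mapsto x \vee K$, where $K$ denotes the join of all $\G$-factors of $G'$ other than $G'/J^G$. By part (i), each such factor is already a $\G$-factor of $J^G$, hence dominated by elements of $\S \circ (\S_G)_G$ strictly below $G'/J^G$; this ensures the image lies in the target local interval, and the structural isomorphism at $G'$ combined with Lemma~\ref{lem:factor-union} identifies both the underlying posets and the induced building sets. For (iv), one constructs the inverse to $\S \circ -$ explicitly: given a nested set $\T \supseteq \S$, set $\S_G$ to be the preimage of $\T \setminus \S$ under the map $G' \mapsto G'/J^G$ restricted to the piece of $\T$ indexed by $G$ in the tree decomposition. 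Nestedness of each $\S_G$ is inherited from $\T$ via the isomorphisms of part (iii), and the identity $\T = \S \circ (\S_G)_G$ is verified pointwise, yielding both the bijection and the join decomposition of the link.

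The main obstacle is part (ii): the case in which $\A$ spans several local pieces requires careful bookkeeping via the tree lemma and repeated applications of Lemma~\ref{lem:factor-union} to track which $\G$-factor of each element of $\A$ sits in which local interval $L^G(\S)$. Once (ii) is established, parts (iii) and (iv) follow in a largely formal way from (i) and the structural isomorphisms that define the building set.
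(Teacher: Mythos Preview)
The paper does not actually prove this theorem. Immediately before the statement it says: ``For proofs, see \cite[Section 2.3]{coron2024matroidsfeynmancategorieskoszul} (stated for geometric lattices but hold verbatim for any lattice) and \cite[Section 2]{BDF_2022}.'' The result is imported from those references, so there is no in-text argument to compare your proposal against.

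That said, your outline is in line with the standard arguments in the cited sources. Your proof of (i) is complete and correct: writing $G = F \vee H$ with $H \in \G$, locating $H$ under a unique factor $G_j$ via the structural isomorphism, and then invoking Lemma~\ref{lem:factor-union} to read off the $\G$-factors of $F$ is exactly the right computation. For (ii)--(iv) you have identified the correct scaffolding---group the vertices of a putative bad antichain by the unique minimum of $(\S\cup\{\un\})_{>X}$ supplied by Lemma~\ref{lem:nested-tree}, and transport everything through the structural isomorphisms using Lemma~\ref{lem:factor-union}---but what you have written remains a sketch. In (ii) the multi-group case needs a genuine argument (one must show that the factors of $\bigvee\A$ lie under a single $G\in\S\cup\{\un\}$ and then reduce to the one-group case, or derive a contradiction with the nestedness of $\S$ itself), and in (iii) one still has to verify that the two interval endpoints match, i.e.\ that $\bigvee(\S\circ(\S_G)_G)_{<G'/J^G}$ joined with the remaining factors of $G'$ really equals $\bigvee(\S_G)_{<G'}$, and that the induced building sets agree. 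These verifications are routine but not one-liners, which is precisely why the paper defers them to the references.
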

In the setting of Theorem \ref{thm:composition-nested-sets} ii), if we only have a collection of nested sets $(\S_G)_{G\in I}$ for some subset $I \subset \S\cup \{\un\}$, then $\S\circ(\S_G)_G$ will mean the composition of nested sets where we implicitly set $\S_G = \emptyset$ for all $G \notin I.$

\begin{example}
    If $\G = \G_{\max}$, then for any $F$, $G/F = G$, $\S$ is a chain, the $\S_G$'s are chains in each interval of $\S,$ and $\S\circ (\S_G)_G$ is the concatenation of the chains $\S_G$, together with $\S$. The isomorphism of \Cref{thm:composition-nested-sets} (iii) is the identity.
\end{example}

The following lemma shows that the composition of nested sets is compatible with edge-labelings induced by admissible maps (see Section~\ref{subsec:admissible-lattices}). 
\begin{lemma}\label[lemma]{lem:composition-descent}
Let $(\L,\G)$ be an irreducible built lattice, together with a map $\omega:\Irr(\L)\rightarrow \Z_{>0}$, inducing the edge-labeling $\lambda_{\omega}.$ In the situation of Theorem \ref{thm:composition-nested-sets} (iii), if the local interval $L^{G'}(\S_G)$ has rank $1$, then the composition of nested sets preserves the labels, meaning that we have the equality of labels 
\[
\lambda_{\omega}(L^{G'}(\S_G) = \lambda_{\omega}( L^{G'/J^G}(\S\circ(\S_G)_G),\]
where $\lambda_{\omega}$ of a local interval of rank $1$ simply means the $\lambda_{\omega}$-label of the underlying covering relation. 
\end{lemma}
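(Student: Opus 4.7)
The plan is to reduce the equality of labels to an equality of two sets of join-irreducible ``witnesses'' for covering relations in $\L$, and then compare both sides using the structural isomorphism at $G'$ together with join-irreducibility. Set $A := J^{G'/J^G}_{\S \circ (\S_G)_G}$, $B := G'/J^G$, $A' := J^{G'}_{\S_G}$, and $B' := G'$. By Theorem~\ref{thm:composition-nested-sets}(iii), the isomorphism $L^{G'/J^G}(\S \circ (\S_G)_G) \simeq L^{G'}(\S_G)$ is realized by $x \mapsto x \vee J^G$: indeed, the $\G$-factors of $G'$ not below $G'/J^G$ are precisely the $\G$-factors of $J^G$ (by Theorem~\ref{thm:composition-nested-sets}(i) combined with \Cref{lem:factor-union}), and their join is $J^G$. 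In particular $A' = A \vee J^G$ and $B' = B \vee J^G$. Since $L^{G'}(\S_G)$ has rank $1$, both $A' \lessdot B'$ and $A \lessdot B$ are covering relations in $\L$, and we must show
\[
\min\{\omega(I) : I \in \Irr(\L),\ A \vee I = B\} \;=\; \min\{\omega(I) : I \in \Irr(\L),\ A' \vee I = B'\},
\]
which will follow once the two sets of witnesses are shown to coincide.

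The forward inclusion is immediate: if $A \vee I = B$, then $A' \vee I = A \vee J^G \vee I = B \vee J^G = B'$. For the reverse, assume $I \in \Irr(\L)$ satisfies $A' \vee I = B' = G'$. Denoting the $\G$-factors of $J^G$ by $H_1, \ldots, H_m$, the structural isomorphism of $\G$ at $G'$ reads
\[
\psi_{G'}\colon [\zero, H_1] \times \cdots \times [\zero, H_m] \times [\zero, G'/J^G] \xrightarrow{\sim} [\zero, G'],
\]
and every element $I \leqslant G'$ decomposes as $I = \bigvee_{i=1}^m (I \wedge H_i) \vee (I \wedge G'/J^G)$. Join-irreducibility of $I$ forces all but one summand to vanish, so $I$ lies below a single $\G$-factor of $G'$. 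If $I \leqslant H_i$ for some $i$, then $I \leqslant J^G \leqslant A'$, which gives $A' \vee I = A' \neq B'$, a contradiction. Hence $I \leqslant G'/J^G = B$, so $A \vee I \leqslant B$, and the identity $(A \vee I) \vee J^G = A' \vee I = B' = B \vee J^G$, combined with the injectivity of $\psi_{G'}$ in the $[\zero, G'/J^G]$ coordinate, forces $A \vee I = B$.

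The delicate step is the initial identification $A' = A \vee J^G$, i.e., the explicit description of the isomorphism from Theorem~\ref{thm:composition-nested-sets}(iii). It hinges on knowing that the $\G$-factors of $G'$ partition into the $\G$-factors of $J^G$ together with the single new factor $G'/J^G$; this follows by applying \Cref{lem:factor-union} inside $[\zero, G']$ to the element $J^G \leqslant G'$, together with Theorem~\ref{thm:composition-nested-sets}(i). Once this description is in place, the witness-set comparison above completes the proof.
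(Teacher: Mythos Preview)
Your approach matches the paper's: both reduce the label equality to an equality of the two sets of irreducible witnesses for the covering relations and verify each inclusion separately. One inaccuracy should be corrected, though. The claim that the $\G$-factors of $G'$ are exactly $G'/J^G$ together with all the $\G$-factors of $J^G$ is false in general. Theorem~\ref{thm:composition-nested-sets}(i) only says that the factors of $G'$ other than $G'/J^G$ are \emph{among} the factors of $J^G$; some factors of $J^G$ may lie below $G'/J^G$ and hence fail to be factors of $G'$. (For instance, take $\L$ Boolean on $\{1,2,3\}$ with $\G=\{1,2,3,12,123\}$, $J^G=1$, $G'=12$: then $G'/J^G=12$ is the sole factor of $G'$, yet $1$ is a factor of $J^G$.) So your displayed structural isomorphism at $G'$ is not correct as written, and \Cref{lem:factor-union} does not give what you claim.

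Fortunately this does not damage the argument. Write $K_2,\ldots,K_s$ for the actual factors of $G'$ other than $G'/J^G$. The identity $A'=A\vee J^G$ still holds: Theorem~\ref{thm:composition-nested-sets}(iii) gives $A'=A\vee K_2\vee\cdots\vee K_s\leqslant A\vee J^G$ (each $K_i\leqslant J^G$), while $A'\geqslant J^G$ because the join defining $A'$ is taken in $[J^G,G]$; hence equality. For the reverse inclusion, run your argument with the $K_j$'s in place of the $H_i$'s: if $I$ lies below some $K_j$ then $I\leqslant J^G\leqslant A'$, a contradiction, so $I\leqslant G'/J^G$. The final injectivity step is then valid on $[A,B]$, since on that interval the map $x\mapsto x\vee J^G$ coincides with the genuine isomorphism $x\mapsto x\vee K_2\vee\cdots\vee K_s$ from Theorem~\ref{thm:composition-nested-sets}(iii). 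With these cosmetic fixes, your proof is correct and essentially identical to the paper's.
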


\begin{example}
In the case of the maximal building set the above result is trivial, since in that situation the two local intervals $L^{G'}(\S_G)$ and $L^{G'/J^G}(\S\circ(\S_G)_G)$ are the same.
\end{example}
\begin{proof}[Proof of \Cref{lem:composition-descent}]
From the definitions, we have the following equalities
\begin{align*}
\lambda_{\omega}(L^{G'}(\S_G)) &= \min \left\{ \omega(I) \;\middle|\; I \in \Irr(\L),\, I \vee \bigvee (\S_G)_{<G'} = G' \right\}, \\
\lambda_{\omega}(L^{G'/J^G}(\S\circ(\S_G)_G)) &= \min \left\{ \omega(I) \;\middle|\; I \in \Irr(\L),\, I \vee \bigvee (\S\circ(\S_G)_G)_{<G'/J^G} = G'/J^G \right\}.
\end{align*}
Therefore, it suffices to prove the equality of sets 
\begin{multline}\label{eq:equality-of-sets}
\left\{ I \;\middle|\; I \in \Irr(\L),\, I \vee \bigvee (\S_G)_{<G'} = G' \right\} = \\
\left\{ I \;\middle|\; I \in \Irr(\L),\, I \vee \bigvee (\S\circ(\S_G)_G)_{<G'/J^G} = G'/J^G \right\}
\end{multline} 

In one direction, if $I$ is an irreducible element of $\L$ in the left-hand side of \eqref{eq:equality-of-sets}, by Remark \ref{rmk:irreducible-in-building} we have $I \in \G$ and so since the elements $G'/J^G, G_1, \ldots, G_k$ are the factors of $G'$ we must have $I \leqslant G'/J^G$, which proves that $I$ also belongs to the right-hand side of \eqref{eq:equality-of-sets}. 

In the other direction, if we denote by $G_1,\ldots, G_k$ the factors of $G$ which are not below $G'/J^G$, then we have the equalities by Theorem \ref{thm:composition-nested-sets} (iii)
\begin{align*}
\left(G'/J^G\right) \vee G_1 \vee \cdots \vee G_k &= G', \\
\left(\bigvee (\S\circ(\S_G)_G)_{<G'/J^G}\right) \vee G_1 \vee \cdots \vee G_k &= \bigvee (\S_G)_{<G'}.
\end{align*}
These inequalities imply the desired inclusion. 
\end{proof}

\begin{example}
Consider $\mathcal{B}_3$ the Boolean lattice over $3$ elements $1,2,3$, with building set $\G = \{1, 2 ,3, 123\}.$ The element $12$ is not in $\G$, but it belongs to the building set of the local interval $L^{\un}(\{1\})$, so the composition $\S = \{1\}\circ \{12\}$ makes sense. The unique factor of $12$ which is not $1$ is $2$, and so we have $\S = \{1, 2\}$. The local intervals of $\S$ are 
\begin{align*}
    L^1(\S)&= ([\zero, 1], \G_{\max}),\\
    L^2(\S) &= ([\zero, 2], \G_{\max}),\\
    L^{\un}(\S) &= ([12, \un], \G_{\max}).
\end{align*}
The local intervals of the nested set $\{12\}$ in the local interval $L^{\un}(\{1\})$ are $([1, 12], \G_{\max})$ and $([12, \un],\G_{\max})$. By Theorem \ref{thm:composition-nested-sets} (iii), taking the join with $1$ gives an isomorphism
\[
L^2(\S) \simeq ([12, \un],\G_{\max}). 
\]
The irreducible elements of $\mathcal{B}_3$ are the atoms $1,2,3$, and we can define $\omega:\Irr(\mathcal{B}_3) \rightarrow \Z_{>0}$ by $\omega(i) = i$ for $i \in \{1,2,3\}$. \Cref{lem:composition-descent} implies that 
\[
\lambda_{\omega}(1,12) =  \lomeg(\zero,2) = 2.
\]
\end{example}

One particular case of composition of nested sets which will be of special interest to us is the case when maximal chains yield maximal nested sets. Let $(\L, \G)$ be a built lattice, and let $\zero \lessdot F_1 \lessdot F_2 \lessdot \cdots \lessdot F_n = \un$ be a maximal chain in $\L$. Since each $F_i$ is an atom in $[F_{i-1}, \un]$, and a building set must contain every atom by Remark \ref{rmk:irreducible-in-building}, we have that each $F_i$ belongs to the building set $\G_{F_{i-1}}$. This means that the composition $\{F_1\}\circ \{F_2\}\circ \cdots \circ \{F_{n-1}\}$ is a well-defined nested set, even if the $F_i$'s need not belong to $\G$. By Theorem \ref{thm:composition-nested-sets} (iii), all the local intervals of that nested set have rank $1,$ and there are exactly $n$ of them (including the local interval at $\un$). In fact, we have the following lemma, which shows that this property characterizes maximal nested sets in general. 
\begin{lemma}\label{lem:max-nested-purity}    
Let $(\L, \G)$ be a built lattice. A nested set $\S \in \N(\L, \G)$ is maximal if and only if all its local intervals $L^G(\S)$ for $G \in \S \cup \un$ have rank $1$. If $\L$ is graded of rank $n$, then the nested set complex $\N(\L, \G)$ is pure of dimension $n-2.$
\end{lemma}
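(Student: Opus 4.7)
The plan is to derive both claims from the link decomposition of Theorem~\ref{thm:composition-nested-sets}(iv),
\[
\lk_{\N(\L,\G)}(\S)\;\simeq\;\bigast_{G\in \S\cup\un} \N(L^G(\S)).
\]
By Lemma~\ref{lem:reducible} one may reduce to the irreducible case $\un\in\G$, so that each $L^G(\S)$ is itself an irreducible built lattice with top element $G$.

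For the first assertion, note that $\S$ is a maximal face of $\N(\L,\G)$ if and only if $\lk_{\N(\L,\G)}(\S)=\{\varnothing\}$, and a join of simplicial complexes equals $\{\varnothing\}$ exactly when every factor does. The vertex set of $\N(L^G(\S))$ is $\G_{J^G}^{G}\setminus\{G\}$, and I would show this set is empty precisely when $L^G(\S)$ has rank $1$. If $J^G\lessdot G$, then $[J^G,G]=\{J^G,G\}$ and $J^G\notin \G_{J^G}$ by construction, forcing emptiness. Conversely, if $\rk L^G(\S)\geqslant 2$, any atom $A$ of $[J^G,G]$ with $A\neq G$ is join-irreducible in $[J^G,\un]$, and so lies in $\G_{J^G}$ by Remark~\ref{rmk:irreducible-in-building}; since $A\leqslant G$, it is a vertex.

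The heart of the purity statement is the following structural claim: for each $G\in\S\cup\un$, the children $C_1,\ldots,C_k$ of $G$ in the tree structure of Lemma~\ref{lem:nested-tree} coincide with the $\G$-factors of $J^G=C_1\vee\cdots\vee C_k$. To prove this I would apply the structural isomorphism $\psi_{J^G}\colon \prod_{j}[\zero,F_j]\xrightarrow{\sim}[\zero,J^G]$, where $F_1,\ldots,F_m$ are the $\G$-factors of $J^G$. Each $C_i\in\G\cap[\zero,J^G]$ lies below a unique $F_{j(i)}$, and matching components under $\psi_{J^G}^{-1}$ shows that each $F_j$ equals the join of the $C_i$'s with $j(i)=j$. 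If two or more $C_i$'s were identified in this way, they would form an antichain in $\S$ of size at least two joining to $F_j\in\G$, contradicting nestedness; hence $\{C_i\}_{i}=\{F_j\}_{j}$.

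Granting the claim, the structural isomorphism together with the additivity of rank in graded products gives $\rho(J^G)=\sum_i \rho(C_i)$. Telescoping over $\S\cup\un$ (each $C\in\S$ being a child of a unique element, again by Lemma~\ref{lem:nested-tree}) yields
\[
\sum_{G\in \S\cup\un}\bigl(\rho(G)-\rho(J^G)\bigr)\;=\;\rho(\un)\;=\;n.
\]
If all local intervals have rank $1$, each summand equals $1$, so $|\S\cup\un|=n$, hence $|\S|=n-1$ and $\dim\S=n-2$; since this holds for every maximal $\S$, the complex $\N(\L,\G)$ is pure of dimension $n-2$. I expect the structural claim identifying children with $\G$-factors to be the main obstacle, as it is the only place where nestedness enters in an essential way; everything else is the link decomposition together with a rank-additivity telescope.
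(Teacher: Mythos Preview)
Your proof is correct. For the first assertion you take the same route as the paper---maximality of $\S$ is equivalent to its link being $\{\varnothing\}$, and the link decomposition of Theorem~\ref{thm:composition-nested-sets}(iv) reduces this to each $\N(L^G(\S))$ being $\{\varnothing\}$---though you spell out the equivalence with rank~$1$ more carefully than the paper does.

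For the purity statement the paper proceeds differently: it argues by induction on the rank of $\L$, picking a minimal element $G\in\S$ (necessarily an atom by the first part), writing $\S=\{G\}\circ\S'$ with $\S'\in\N(\L_G,\G_G)$ via Theorem~\ref{thm:composition-nested-sets}(iv), and applying the inductive hypothesis to the rank-$(n-1)$ lattice $\L_G$ to obtain $|\S'|=n-2$. Your approach is more direct: the structural observation that the children of $G$ in the tree $\S\cup\{\un\}$ are precisely the $\G$-factors of $J^G$ (a standard and useful fact about nested sets, and your proof of it is clean) gives the rank additivity $\rho(J^G)=\sum_i\rho(C_i)$, and the telescope $\sum_{G\in\S\cup\un}(\rho(G)-\rho(J^G))=\rho(\un)$ immediately yields $|\S\cup\un|=n$. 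Your argument avoids the induction and the composition machinery for the second part and makes the role of gradedness transparent; the paper's induction, on the other hand, needs no separate verification of the children-equals-factors claim, since that fact is hidden inside Theorem~\ref{thm:composition-nested-sets}.
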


\begin{proof}
The first statement is a direct consequence of Theorem \ref{thm:composition-nested-sets} (iv): the simplices of $\N(\L, \S)$ containing $\S$ are in bijection with the simplices of the join 
\[
\lk_{\N(\L, \G)}(\S) \simeq \bigast_{G \in \S \cup \un } \N(L^G(\S))
\] which has a nontrivial simplex if and only if at least one of the local intervals has rank at least $2$. 

For the second statement, we prove the result by induction on the rank $n$ of $\L$. Let $\S \in \N(\L, \G)$ be a maximal nested set and let $G$ be any minimal element in $\S.$ By the previous statement, $G$ must be an atom. By Theorem \ref{thm:composition-nested-sets} (iv), we have $\S = \{G\}\circ \S'$ for some $\S' \in \N(\L_G, \G_G)$. Also by Theorem \ref{thm:composition-nested-sets} (iv), the nested set $\S'$ is maximal. Since $\L_G$ is graded of rank $n-1$, we have that $\abs{\S'} = n-2$ by induction. From the definition of the composition of nested sets, we see that this implies that  $\S = \{G\}\circ \S'$ has cardinality $n-1$ which concludes the proof. 
\end{proof}

We end this subsection with an example showing that the class of nested set complexes of geometric lattices contains the augmented Bergman complexes of matroids. 

\begin{example}\label{ex:augmented-bergman}
    The following example is a consequence of an observation attributed to Chris Eur and written in detail by Mastroeni and McCullough in \cite[Section~5.1]{mastroeni-mccullough}, we refer to this paper for more details. Consider a matroid $\M$, and let $\M'$ be the \emph{free coextension} of $\M$, i.e., the matroid that results from the operation $\trunc(\M^*\oplus \mathrm{U}_{1,1})^*$, where $\trunc$ stands for truncation and the asterisk for matroid duality; we refer to \cite{oxley} for more details about it. Let us denote by $e$ the single element that we added to the ground set of $\M$ to obtain $\M'$. The flats of $\M'$ correspond to i) sets of the form $F\cup e$ where $F$ is a flat of $\M$, and ii) independent sets of $\M$. The subset of $\mathcal{L}(\M')$ consisting of the flats of the form $F\cup e$ for $F\in \mathcal{L}(\M)$ is a building set on $\L(\M')$, denoted $\G_{\aug}$. The nested set complex $\N(\L(\M'),\mathcal{G}_{\aug})$ is isomorphic to the \emph{augmented Bergman complex} of $\M$, introduced by Braden, Huh, Matherne, Proudfoot, and Wang in \cite{braden-huh-matherne-proudfoot-wang}. 
\end{example}

\section{Vertex decomposability}\label{sec:vert-decomp}

The main result of this section is that for any built geometric lattice $(\L, \G)$, the nested set complex $\N(\L,\G)$ is vertex decomposable. More generally, we will prove that for any graded built lattice $(\L, \G)$, if $\L$ admits an injective admissible map, then $\N(\L, \G)$ is vertex decomposable (Theorem~\ref{thm:vertex decomposability}). This will allow us to derive an explicit combinatorial formula for the $h$-polynomial of the nested set complex in that case (Proposition \ref{prop:h-poly-descents}). In the case of the maximal building set, this vertex decomposition is known by the fact that admissible lattices are EL-shellable \cite[Section 3]{bjorner-el-shellable}, and the fact that order complexes of EL-shellable posets are vertex decomposable \cite[Theorem~11.6]{BjornerWachs1997}.

\subsection{The vertex decomposition}
The main difficulty we have to face in order to prove the vertex decomposition amounts to understanding the deletion $\N(\L, \G) \setminus G$ of a nested set complex $\N(\L, \G)$ for some particular $G\in \G$. More specifically, for inductive purposes we would like to express this deletion as the nested set complex of some smaller built lattice. This is addressed by the following definitions and lemmas.  

\begin{definition}
    Let $(\L, \G)$ be a built lattice and let $G$ be an element of $\G$. The \emph{building ideal} generated by $G$, denoted by $(G)$, is defined as the subset 
    \begin{equation*}
         (G) \coloneqq \{ F \in \L \, | \, \text{$G$ is a  $\G$-factor of $F$}\} \subseteq \L.
    \end{equation*}
\end{definition}
For example, if $\G$ is the maximal building set, then for any $G \in \G$, $(G) = \{G\}$. 

The following two lemmas show that removing a join-irreducible element $G$ from a built lattice $(\L, \G)$ yields a smaller built lattice $(\L\setminus (G), \G \setminus G)$. 
\begin{lemma}\label{lem:restriction-lattice}
    For any built lattice $(\L, \G)$ and any $G \in \G$, if $G$ is join-irreducible then $\L\setminus (G)$ is a lattice.   
\end{lemma}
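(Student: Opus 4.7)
The plan is to show directly that $\L \setminus (G)$ has all pairwise meets and a maximum element; since the poset is finite, this automatically upgrades to a full lattice structure. The key device is a ``truncation'' map $\rho \colon \L \to \L \setminus (G)$ that sends each $F \in \L$ to the largest element of $\L \setminus (G)$ lying below $F$.

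First, fix $F \in (G)$. Because $G$ is a $\G$-factor of $F$, the structural isomorphism at $F$ yields $\psi_F \colon [\zero, G] \times [\zero, F_G] \xrightarrow{\sim} [\zero, F]$, where $F_G$ denotes the join of the $\G$-factors of $F$ other than $G$. Applying \Cref{lem:factor-union} inside this product, and using both that the isomorphism forces $G \wedge F_G = \zero$ and that $G \neq \zero$, the only pairs $(a, b) \in [\zero, G] \times [\zero, F_G]$ for which $G$ appears as a $\G$-factor of $\psi_F(a,b)$ are those with $a = G$. Hence $(G) \cap [\zero, F]$ corresponds exactly to $\{G\} \times [\zero, F_G]$ under $\psi_F^{-1}$.

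At this point the join-irreducibility of $G$ enters decisively: $G$ covers a unique element $G^-$ in $\L$, and every element strictly below $G$ lies in $[\zero, G^-]$, so $[\zero, G] \setminus \{G\} = [\zero, G^-]$. Consequently $[\zero, F] \setminus (G) \cong [\zero, G^-] \times [\zero, F_G]$ is itself a lattice with maximum $G^- \vee F_G$. We then set $\rho(F) \coloneqq G^- \vee F_G$ for $F \in (G)$ and $\rho(F) \coloneqq F$ otherwise; a short verification shows that $\rho(F)$ realises the maximum of $\{F' \in \L \setminus (G) : F' \leqslant F\}$ for every $F \in \L$, since any such $F'$ lies in $[\zero, F] \setminus (G)$ and is therefore bounded above by its top.

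To conclude, given $F_1, F_2 \in \L \setminus (G)$, the element $\rho(F_1 \wedge F_2)$ is a common lower bound in $\L \setminus (G)$, and any other common lower bound $F'$ satisfies $F' \leqslant F_1 \wedge F_2$ and hence $F' \leqslant \rho(F_1 \wedge F_2)$; the top of $\L \setminus (G)$ is similarly $\rho(\un)$. Since a finite meet-semilattice with a maximum element is automatically a lattice, this finishes the argument. The main obstacle is the structural analysis in the second paragraph: without join-irreducibility, the poset $[\zero, G] \setminus \{G\}$ can have several maximal elements, the resulting product decomposition fails to be a lattice, and the canonical truncation $\rho$ is no longer well-defined.
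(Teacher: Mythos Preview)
Your argument is correct, but it proceeds quite differently from the paper. The paper shows directly that $\L\setminus(G)$ is closed under the ambient join: for $A,B\notin(G)$, if $G$ were a $\G$-factor of $A\vee B$, then decomposing $A=\bigvee_i A_i$ and $B=\bigvee_i B_i$ through the structural isomorphism at $A\vee B$ forces $A_1\vee B_1=G$, and join-irreducibility of $G$ then gives $A_1=G$ or $B_1=G$, a contradiction via \Cref{lem:factor-union}. Combined with $\zero\in\L\setminus(G)$, this yields a sub-join-semilattice of $\L$ with the \emph{same} join.

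Your route instead builds meets from scratch via the truncation $\rho$, exploiting that join-irreducibility of $G$ makes $[\zero,G]\setminus\{G\}=[\zero,G^-]$ an honest interval so that $[\zero,F]\setminus(G)$ has a maximum. This is perfectly valid and gives a pleasant explicit description of $[\zero,F]\setminus(G)$ as a product interval. What it does not state, and what the paper's argument delivers for free, is that the join of $\L\setminus(G)$ agrees with the join of $\L$; this fact is used repeatedly afterwards (in the proof that $\G\setminus G$ is a building set, in \Cref{lem:restriction-of-nested-cx}, and in Claim~\ref{lem:labels-of-restriction}). Your structural analysis does imply it---if $F_1,F_2\in[\zero,G^-]\times[\zero,F_G]$ then so does their join, contradicting $F_1\vee F_2=F$---but you would want to record that explicitly if you intend to use your version downstream.
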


\begin{proof}
    Since $\zero \notin (G)$, the poset $\L\setminus (G)$ has a lower bound and so it is enough to prove that $\L$ is a join-semilattice. Let $A$ and $B$ be two elements of $\L \setminus (G),$ and let $G_1, \ldots, G_k$ be the factors of $A\vee B$, where the join is taken in $\L.$ If none of the factors are equal to $G$, then we have $A\vee B \in \L\setminus (G)$ and so $A \vee B $ is the supremum of $A$ and $B$ in $\L\setminus (G).$ On the other hand, if $G$ is one of the factors of $A\vee B$, say $G_1$, then by the structural isomorphism of $\G$ at $A \vee B$
    \[
    \psi_{A \vee B} \colon [\zero, A \vee B] \simeq \prod_{1\leqslant i \leqslant k }[\zero, G_i], 
    \] we can uniquely write $A = \bigvee_i A_i$ and $B = \bigvee_i B_i$ for $A_i, B_i$ with $1 \leqslant i \leqslant k$ some elements below $G_i$ for $1 \leqslant i \leqslant k$. By the same structural isomorphism, we have $A_1 \vee B_1 = G_1 = G$. Since $G$ is irreducible, we must have $A_1 =G$ or $B_1 = G$. By Lemma \ref{lem:factor-union} this implies that either $A$ or $B$ has $G$ as one of its factors, which is a contradiction. 
\end{proof}

\begin{lemma}
    For any built lattice $(\L, \G)$ and any join-irreducible $G\in \G$, the set $\G \setminus G$ is a building set of the lattice $\L \setminus (G)$.
\end{lemma}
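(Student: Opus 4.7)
The plan is to verify the building-set axiom for $(\L \setminus (G), \G \setminus G)$ directly: for each $F \in \L \setminus (G)$, show that the corresponding join map is a poset isomorphism. The main tools will be the structural isomorphism $\psi_F \colon \prod_i [\zero, G_i] \xrightarrow{\sim} [\zero, F]$ of $(\L, \G)$ at $F$, together with \Cref{lem:factor-union} and the fact from \Cref{lem:restriction-lattice} that joins in $\L \setminus (G)$ agree with joins in $\L$.

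I would start with two preliminary observations that identify the relevant combinatorics. First, since $F \notin (G)$, the element $G$ is not maximal in $\G_{\leqslant F}$, and therefore $\max (\G \setminus G)_{\leqslant F} = \max \G_{\leqslant F} = \{G_1, \ldots, G_k\}$, i.e.\ the $\G$-factors of $F$ are unchanged. Second, each $G_i \in \G$ has only itself as a $\G$-factor, and $G_i \neq G$, so $G_i \notin (G)$; thus the intervals appearing in the join map of $(\L \setminus (G), \G \setminus G)$ at $F$ are precisely $[\zero, G_i] \setminus (G)$ and $[\zero, F] \setminus (G)$.

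The main step, and the place where I expect the real subtlety, is to restrict $\psi_F$ to the complement of $(G)$. By \Cref{lem:factor-union}, for $H \in [\zero, F]$ with decomposition $(H_i) = \psi_F^{-1}(H)$, the set of $\G$-factors of $H$ is the \emph{disjoint} union of the sets of $\G$-factors of the $H_i$'s; in particular $H \in (G)$ if and only if some (by disjointness, exactly one) coordinate $H_i$ lies in $(G)$. This is precisely what ensures $\psi_F^{-1}([\zero, F] \setminus (G)) = \prod_i ([\zero, G_i] \setminus (G))$; without the disjointness one would only get an inclusion, and the product decomposition of $[\zero, F] \setminus (G)$ would fail. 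Once this identification is in hand, $\psi_F$ restricts to an order-isomorphism of these two subposets, and by \Cref{lem:restriction-lattice} this restriction is the join map in $\L \setminus (G)$, hence a poset isomorphism. To finish one notes $\G \setminus G \subseteq (\L \setminus (G)) \setminus \{\zero\}$, which holds because $\zero \notin \G$ and no $H \in \G \setminus G$ belongs to $(G)$, since its only $\G$-factor is $H$ itself, which differs from $G$.
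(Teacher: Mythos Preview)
Your proof is correct and follows essentially the same approach as the paper: both verify the building-set axiom at each $F\in\L\setminus(G)$ by restricting the structural isomorphism $\psi_F$ of $(\L,\G)$, using \Cref{lem:factor-union} to identify which elements of $[\zero,F]$ lie in $(G)$, and invoking \Cref{lem:restriction-lattice} for the agreement of joins. The only difference is organizational: the paper splits into the cases $G\nleqslant F$ and $G\leqslant F$ (in the first, $[\zero,F]$ is disjoint from $(G)$; in the second, $(G)$ only touches the factor containing $G$), whereas you treat both at once via the equivalence ``$H\in(G)$ iff some $H_i\in(G)$''. One small remark: the disjointness in \Cref{lem:factor-union} is not actually needed for that equivalence---the union alone suffices---so your aside about ``without the disjointness one would only get an inclusion'' is not quite right, though it does not affect the argument.
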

This result is true even if $G$ is not assumed join-irreducible, provided one works with a definition of building set for meet-semilattices rather than just lattices; see \cite[Definition 2.2]{feichtner-kozlov}.

\begin{proof}
    In this proof, for any elements $A\leqslant B \in \L \setminus (G)$, the notation $[A,B]$ will mean the interval in $\L$ and $[A,B]_G$ will mean the interval in $\L \setminus (G).$ Let $F$ be an element of $\L\setminus (G)$, with $\G$-factors $G_1, \ldots, G_k.$ If $G\nleqslant F$, then the $(\G\setminus G)$-factors of $F$ are the $\G$-factors of $F$, and we have the isomorphisms of posets
    \begin{equation*}
        [\zero, F]_G = [\zero, F] \simeq \prod_{1 \leqslant i \leqslant k}[\zero, G_i] = \prod_{1 \leqslant i \leqslant k}[\zero, G_i]_G.
    \end{equation*}
    On the other hand, if $G\leqslant F$, then $G$ is below one of the factors of $F,$ say $G_1.$ Since $F$ does not belong to $(G)$, we have $G_1 \neq G$ and so the $(\G\setminus G)$-factors of $F$ are the $\G$-factors of $F$. Moreover, notice that for any collection of elements $F_i \leqslant G_i$ for $1 \leqslant  i  \leqslant k$, by Lemma \ref{lem:factor-union} we have that $\bigvee_i F_i \in (G)$ if and only if $F_1 \in (G)$. This gives the isomorphisms of posets
    \begin{equation*}
        [\zero, F]_G \simeq [\zero, G_1]_G\times \prod_{1 <i \leqslant k}[\zero, G_i] = \prod_{1\leqslant i \leqslant k}[\zero, G_i]_G. \qedhere
    \end{equation*}
\end{proof}
Figure \ref{fig:building-deletion} shows the deletion of a building ideal in the Boolean lattice over three elements. The elements of the considered building set are circled. Since we are deleting an atom, the poset stays a lattice.
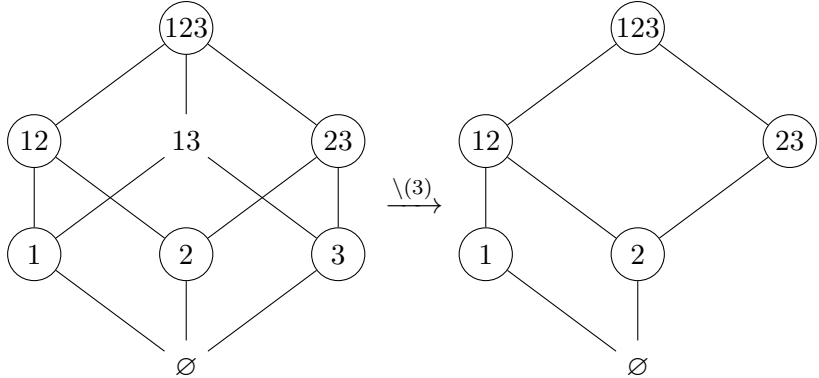
\begin{figure}[H]
    \centering
\begin{tikzpicture}[
  node/.style = {circle, draw, minimum size=7mm, inner sep=0pt},
  tnode/.style = {circle, draw = none, minimum size=7mm, inner sep=0pt}
]

% Nodes
\node[tnode] (0)   at (0,0)   {$\emptyset$};

\node[node] (1)   at (-2,1.5) {$1$};
\node[node] (2)   at (0,1.5)  {$2$};
\node[node] (3)   at (2,1.5)  {$3$};

\node[node] (12)  at (-2,3)   {$12$};
\node[tnode] (13)  at (0,3)    {$13$};
\node[node] (23)  at (2,3)    {$23$};

\node[node] (123) at (0,4.5)  {$123$};
\node[tnode] (arrow) at (3,2.3) {$\xrightarrow{\setminus(3)}$};

% Edges (cover relations)
\draw (0) -- (1);
\draw (0) -- (2);
\draw (0) -- (3);

\draw (1) -- (12);
\draw (1) -- (13);

\draw (2) -- (12);
\draw (2) -- (23);

\draw (3) -- (13);
\draw (3) -- (23);

\draw (12) -- (123);
\draw (13) -- (123);
\draw (23) -- (123);
\end{tikzpicture}
\begin{tikzpicture}[
  node/.style = {circle, draw, minimum size=7mm, inner sep=0pt},
  tnode/.style = {circle, draw = none, minimum size=7mm, inner sep=0pt}
]

% Nodes
\node[tnode] (0)   at (0,0)   {$\emptyset$};

\node[node] (1)   at (-2,1.5) {$1$};
\node[node] (2)   at (0,1.5)  {$2$};

\node[node] (12)  at (-2,3)   {$12$};

\node[node] (23)  at (2,3)    {$23$};

\node[node] (123) at (0,4.5)  {$123$};

% Edges (cover relations)
\draw (0) -- (1);
\draw (0) -- (2);

\draw (1) -- (12);

\draw (2) -- (12);
\draw (2) -- (23);

\draw (12) -- (123);

\draw (23) -- (123);
\end{tikzpicture}
\caption{Deletion of a building ideal}
    \label{fig:building-deletion}
\end{figure}

Finally, we have the following lemma, which answers the question asked in the preamble of this section. 

\begin{lemma}\label{lem:restriction-of-nested-cx}
    For any built lattice $(\L, \G)$ and for any join-irreducible element $G\in \G$, we have the equality of nested set complexes 
    \[ \N(\L, \G) \setminus G = \N(\L\setminus (G), \G\setminus G).\]
\end{lemma}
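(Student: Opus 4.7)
The plan is to establish the set equality of the two simplicial complexes by matching their face sets, which reduces to comparing the nested condition of a subset $\S \subseteq \G\setminus\{G\}$ in the two built lattices $(\L, \G)$ and $(\L\setminus(G), \G\setminus\{G\})$. A preliminary observation is that $\G \setminus \{G\} \subseteq \L \setminus (G)$: any $F \in \G \setminus \{G\}$ has itself as its unique $\G$-factor (the maximum of $\G_{\leqslant F}$ is $F$), so $F \notin (G)$. Thus both complexes may be viewed as subcomplexes of the simplex on vertex set $\G\setminus\{G\}$, and all singletons are faces of each.

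The main technical step is to reconcile the join operations in $\L$ and in $\L\setminus(G)$ for antichains contained in $\S$. I would replay the argument from the proof of \Cref{lem:restriction-lattice} to show that if $A, B \in \L \setminus (G)$, then the join $A \vee_\L B$ computed in $\L$ already lies in $\L \setminus (G)$: otherwise $G$ would appear as a $\G$-factor of $A \vee_\L B$, and the structural isomorphism at $A \vee_\L B$ together with \Cref{lem:factor-union} and the join-irreducibility of $G$ would force $G$ to be a $\G$-factor of either $A$ or $B$, contradicting the hypothesis $A, B \in \L \setminus (G)$. Iterating pairwise, for any finite antichain $\A \subseteq \L\setminus(G)$ we obtain $\bigvee_\L \A \in \L\setminus(G)$ and therefore $\bigvee_\L \A = \bigvee_{\L\setminus(G)} \A$, since the join in the subposet is by definition the smallest upper bound lying in $\L\setminus(G)$.

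With the joins identified, the two nestedness conditions coincide almost automatically: for an antichain $\A \subseteq \S$ of cardinality at least two, the fact that $\bigvee_\L \A \in \L\setminus(G)$ implies $\bigvee_\L \A \neq G$, so the condition $\bigvee_\L \A \notin \G$ is equivalent to $\bigvee_\L \A \notin \G\setminus\{G\}$. Combining these observations, $\S$ is nested in $(\L,\G)$ if and only if $\S$ is nested in $(\L\setminus(G),\G\setminus\{G\})$, which yields the equality of face sets and hence the equality of simplicial complexes.

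The principal obstacle is precisely the identification of the joins, since on the face of it the nested condition involves two \emph{a priori} different lattice operations; once this is dispatched using the factor-theoretic argument above, the remainder is a direct unpacking of definitions. I would keep the exposition brief, perhaps stating the pairwise stability of $\L\setminus(G)$ under joins as a short sublemma (or a one-sentence appeal to the proof of \Cref{lem:restriction-lattice}) before concluding the main equality.
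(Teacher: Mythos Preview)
Your proposal is correct and follows essentially the same approach as the paper's proof. The paper's argument is simply a more compressed version of yours: it states in one sentence that for any antichain $\A \subseteq \S \subseteq \G\setminus\{G\}$ of size at least two, the join $\bigvee \A$ lies in $\G$ if and only if it lies in $\G\setminus\{G\}$, noting parenthetically that the joins in $\L$ and in $\L\setminus(G)$ agree; you make this last point explicit by replaying the argument of \Cref{lem:restriction-lattice}, and you add the helpful preliminary check that $\G\setminus\{G\}\subseteq \L\setminus(G)$.
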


\begin{proof}
    For any subset $\S \subset \G\setminus G$ and any antichain $\mathcal{A}\subset \S$ of size at least $2$, $\bigvee \mathcal{A} \in \G\setminus G $ if and only if $\bigvee \mathcal{A} \in \G$ (where $\bigvee$ means both the join in $\L$ and in $\L \setminus (G)$ since those joins are equal). Therefore, $\S$ is a nested set of $\N(\L, \G)$ not containing $G$ if and only if $\S$ is a nested set of $\N(\L\setminus (G), \G\setminus G).$ 
\end{proof}

Note that in the above lemma the assumption $G\in \G$ is superfluous, as a building set must contain every join-irreducible element (Remark \ref{rmk:irreducible-in-building}). We are now ready to prove the main theorem of this section. 

\newtheorem*{thm:intro1}{Theorem~\ref{thm:vertex decomposability}}
\begin{thm:intro1}
    Let $(\L, \G)$ be a graded built lattice. If $\L$ admits an injective admissible map, then the nested set complex $\N(\L, \G)$ is vertex decomposable. 
\end{thm:intro1}

\begin{proof}
    The proof goes by induction on the cardinality of $\L$. By Lemma \ref{lem:reducible} we can assume that $(\L, \G)$ is irreducible. Let $\omega: \Irr(\L) \rightarrow \Z_{\geq 0 }$ be an injective admissible map on $\L$. Let $G_{\max}\in \G$ be the unique irreducible element which maximizes $\omega$. If $G_{\max} = \un$, this means that $\un$ covers a unique element $F$, and so we have $\N(\L, \G) \simeq c\N(\L_F, \G_F)$. In that case, Lemma \ref{lem:reducible} allows us to conclude by induction. 
    
    We now assume $G_{\max} \neq \un$. By the results of Section \ref{sec:prelim}, we have the isomorphism of simplicial complexes 
    \begin{equation*}
        \lk_{\N(\L,\G)}(G_{\max}) \simeq \N(\L_{G_{\max}}, \G_{G_{\max}})*\N(\L^{G_{\max}}, \G^{\G_{\max}}),
    \end{equation*}
    which is vertex decomposable by induction, since a join of two vertex decomposable simplicial complexes is vertex decomposable. By Lemma \ref{lem:restriction-of-nested-cx} we have the equality of simplicial complexes
    \begin{equation*}
        \N(\L, \G)\setminus G_{\max} = \N(\L\setminus (G_{\max}), \G\setminus G_{\max}).
    \end{equation*}
    To conclude by induction that $\N(\L, \G)\setminus G_{\max}$ is pure vertex decomposable of same dimension as $\N(\L, \G)$, by Lemma \ref{lem:max-nested-purity} it is enough to prove that the lattice $\L\setminus (G_{\max})$ is graded of same rank as $\L$ and admits an injective admissible map. We start with the following claim.
    \begin{claim}\label{lem:chain-in-restriction}
    For any $A \leqslant B \in \L \setminus (G_{\max})$, the unique saturated chain in $\L$ with increasing $\lambda_{\omega}$-labels between $A$ and $B$ is contained in $\L\setminus (G_{\max}).$
    \end{claim}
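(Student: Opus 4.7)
The plan is to argue by contradiction: suppose a saturated chain $A = C_0 \lessdot_{\L} \cdots \lessdot_{\L} C_k = B$ with weakly increasing $\lambda_{\omega}$-labels $\ell_1 \leqslant \cdots \leqslant \ell_k$ enters the building ideal $(G_{\max})$. Since $A, B \notin (G_{\max})$, let $j$ be the smallest index and $j'$ the largest index with $C_j, C_{j'} \in (G_{\max})$, so $1 \leqslant j \leqslant j' \leqslant k-1$. The strategy is to show that the ``entering'' label satisfies $\ell_j = \omega(G_{\max})$ while the ``leaving'' label satisfies $\ell_{j'+1} < \omega(G_{\max})$, which contradicts $\ell_j \leqslant \ell_{j'+1}$.

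For the entering step, I would use the structural isomorphism $\psi_{C_j} \colon \prod_{i=1}^{m} [\zero, G_i] \simeq [\zero, C_j]$, where $G_{\max} = G_1, G_2, \ldots, G_m$ are the $\G$-factors of $C_j$, to decompose $C_{j-1} = H_1 \vee \cdots \vee H_m$ with $H_i \leqslant G_i$. A rank count combined with \Cref{lem:factor-union} applied to $C_{j-1} \notin (G_{\max})$ forces $H_i = G_i$ for $i \geqslant 2$ and $H_1 \lessdot G_{\max}$. The label $\ell_j$ then reduces to $\lambda_{\omega}(H_1 \lessdot G_{\max})$ within $[\zero, G_{\max}]$: since $G_{\max}$ is join-irreducible, every element strictly below $G_{\max}$ lies below $H_1$, so the only irreducible $I \in \Irr(\L)$ with $H_1 \vee I = G_{\max}$ is $G_{\max}$ itself, yielding $\ell_j = \omega(G_{\max})$.

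The leaving step follows by a symmetric analysis. The $\G$-factors of $C_{j'+1}$ are some $G_1, \ldots, G_m$, none equal to $G_{\max}$ since $C_{j'+1} \notin (G_{\max})$; decomposing $C_{j'} = H_1 \vee \cdots \vee H_m$ and using $C_{j'} \in (G_{\max})$ together with \Cref{lem:factor-union}, one forces $G_{\max}$ to be a $\G$-factor of a unique $H_s$, whence $G_{\max} \leqslant H_s \lessdot G_s$. The label $\ell_{j'+1}$ then reduces to $\lambda_{\omega}(H_s \lessdot G_s)$, and any irreducible $I$ realizing this cover satisfies $I \not\leqslant H_s$; since $G_{\max} \leqslant H_s$, this forces $I \neq G_{\max}$, and by the injectivity and maximality of $\omega$ at $G_{\max}$ one has $\omega(I) < \omega(G_{\max})$. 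Hence $\ell_{j'+1} < \omega(G_{\max})$.

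The main technical hurdle will be the bookkeeping with the two structural decompositions: in both analyses one must pin down which coordinate of the product $\prod_{i} [\zero, G_i]$ absorbs the rank jump, and then verify that the global label in $\L$ coincides with the label of a single covering relation inside a single factor interval. Once these reductions are carried out, the qualitative asymmetry of $G_{\max}$, namely that it appears freshly as a new $\G$-factor when entering $(G_{\max})$ but is already hidden below $H_s$ when leaving, delivers the strict drop $\ell_{j'+1} < \omega(G_{\max}) = \ell_j$ that defeats the weak monotonicity hypothesis.
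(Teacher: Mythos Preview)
Your argument is correct. The entering-step analysis matches the paper's: both use the structural isomorphism at $C_j$ together with the join-irreducibility of $G_{\max}$ to conclude $\ell_j = \omega(G_{\max})$. Where you diverge is in locating the second label that violates monotonicity. You track the chain all the way to its exit point $j'$ and carry out a second factor decomposition at $C_{j'+1}$ to show $\ell_{j'+1} < \omega(G_{\max})$. The paper instead stops immediately at step $j+1$: since $G_{\max}$ is a $\G$-factor of $C_j$ we have $G_{\max} \leqslant C_j$, so $G_{\max}$ cannot realize the cover $C_j \lessdot C_{j+1}$; injectivity then gives $\ell_{j+1} \neq \omega(G_{\max})$, and maximality gives $\ell_{j+1} < \omega(G_{\max}) = \ell_j$, a descent at consecutive steps. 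Your exit-step argument is sound, and the underlying mechanism (once $G_{\max}$ lies below an element of the chain it is permanently unavailable as a joining irreducible) is the same; the paper's version simply exploits this one step earlier and thereby avoids the second structural decomposition and the bookkeeping you flag as the main technical hurdle.
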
 
    \noindent \emph{Proof.} Let $A_0 = A \lessdot A_1 \lessdot \cdots \lessdot A_k = B$ be the unique saturated chain with increasing $\lambda_{\omega}$-labels between $A$ and $B$. Suppose for contradiction that $i$ is the smallest index such that $A_i \in (G_{\max})$. If we denote the factors of $A_i$ by $G_1 = G_{\max}, G_2, \ldots, G_p$, then we have the isomorphism in the definition of building set, $[\zero, A_i] \simeq \prod_{1 \leqslant q \leqslant p}[\zero, G_q]$ and the element $A_{i-1}$ can be uniquely written as a join $A_{i-1} = \bigvee_{1\leqslant q \leqslant p} A_{i-1}^{q}$ where each $A^q_{i-1}$ is below $G_q$ respectively. By minimality of $i$, we have $A^1_{i-1} < G_{\max}$ and so by the same structural isomorphism, we must have $\lambda_{\omega}(A_{i-1}, A_i) = \omega(G_{\max}).$ Since $G_{\max}\leqslant A_{i}$ we must have $\lambda_{\omega}(A_i, A_{i+1}) \neq \omega(G_{\max}),$ and so by maximality of $\omega(G_{\max})$ we have that $\lambda_{\omega}$ descends at $A_{i}$, which is a contradiction. 

\medskip

This proves that $\L\setminus (G)$ is graded, with rank given by the restriction of the rank of $\L.$ We now turn toward proving that $\L\setminus (G)$ admits an injective admissible map. One cannot simply restrict $\omega$, as deleting an irreducible element in a lattice can create new irreducible elements. Let us denote by $G_1, \ldots, G_k \in I(\L\setminus (G_{\max})) \setminus \Irr(\L)$ the new irreducible elements in any order. We define $\tilde{\omega}:I(\L\setminus (G))\rightarrow \Z_{\geqslant 0}$ by setting $\tilde{\omega}(G) = \omega(G)$ for any $G \in \Irr(\L) \cap I(\L\setminus (G))$, and $\tilde{\omega}(G_i) = \omega(G_{\max}) + i$ for $1 \leqslant i \leqslant k.$ By construction the map $\tilde{\omega}$ is injective. What remains is proving that it is admissible. This will be a consequence of the following claim. 
\begin{claim}\label{lem:labels-of-restriction}
For any $A < B \in \L\setminus (G_{\max})$, $B$ covers $A$ in $\L \setminus (G_{\max})$ if and only if $B$ covers $A$ in $\L.$ Moreover in that case, we have that $\lambda_{\omega}(A,B) < \omega(G_{\max})$ if and only if $\lambda_{\Tilde{\omega}}(A,B) < \omega(G_{\max})$ and in that situation we have $\lambda_{\omega}(A, B)= \lambda_{\Tilde{\omega}}(A,B).$
\end{claim}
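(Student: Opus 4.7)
The plan is to address the two halves of the claim in sequence. For the first half (coincidence of covering relations), I would invoke the fact just established during the proof of \Cref{lem:chain-in-restriction}, namely that $\L\setminus (G_{\max})$ is graded with rank function inherited from $\L$. Given this, both directions of the biconditional reduce to the observation that $A$ and $B$ differ by rank one in one lattice if and only if they do so in the other; there is simply no room for an intermediate element in either setting.

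For the label comparison, my first step would be to pin down precisely which elements of $\L\setminus (G_{\max})$ are join-irreducible there. I would argue two things. First, using \Cref{rmk:irreducible-in-building}, every $I\in\Irr(\L)\setminus\{G_{\max}\}$ has itself as its unique $\G$-factor, so $I\notin (G_{\max})$. Second, the argument inside the proof of \Cref{lem:restriction-lattice} actually shows that for any $A,B\in\L\setminus (G_{\max})$ the join $A\vee B$ taken in $\L$ already lies in $\L\setminus (G_{\max})$, and therefore coincides with the join taken in $\L\setminus (G_{\max})$. This inheritance of joins immediately forces every $I\in\Irr(\L)\setminus\{G_{\max}\}$ to remain join-irreducible after restriction, and yields the decomposition
\[
\Irr(\L\setminus (G_{\max}))=(\Irr(\L)\setminus\{G_{\max}\})\sqcup\{G_1,\ldots,G_k\},
\]
on which $\tilde\omega$ agrees with $\omega$ on the first summand and carries values $>\omega(G_{\max})$ on each $G_i$, by the very definition of $\tilde\omega$.

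With this identification in hand, the label comparison becomes a routine matching of index sets. Both $\lambda_\omega(A,B)$ and $\lambda_{\tilde\omega}(A,B)$ are minima, over irreducibles $I$ of the respective lattice satisfying $A\vee I=B$, of $\omega$ and $\tilde\omega$ respectively, where the joins in the two lattices coincide by the previous step. Since $\omega(G_{\max})$ is the maximum of $\omega$ by injectivity, and since every new irreducible $G_i$ has been assigned $\tilde\omega(G_i)=\omega(G_{\max})+i>\omega(G_{\max})$, the two subsets
\[
\{I\in\Irr(\L):\omega(I)<\omega(G_{\max})\},\qquad \{I\in\Irr(\L\setminus (G_{\max})):\tilde\omega(I)<\omega(G_{\max})\}
\]
are equal, and $\omega$ and $\tilde\omega$ agree on this common set. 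The biconditional and the claimed equality of labels then follow directly by restricting both minima to this set.

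I expect the main obstacle to be the clean identification of $\Irr(\L\setminus (G_{\max}))$ as old irreducibles (with matching labels) together with new irreducibles (which receive deliberately large labels). Everything else — including the coincidence of covering relations, the equality of joins, and the final comparison of minima — is then a short unpacking of definitions, powered by the maximality of $\omega(G_{\max})$ in the image of $\omega$ and the way $\tilde\omega$ was constructed to sit above that threshold on the new irreducibles.
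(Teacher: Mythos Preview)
Your proposal is correct and follows essentially the same approach as the paper: both arguments deduce the first statement from \Cref{lem:chain-in-restriction} (you via its gradedness corollary) and derive the label comparison from the coincidence of joins established in \Cref{lem:restriction-lattice} together with the fact that $\Irr(\L)\setminus\{G_{\max}\}$ sits inside $\Irr(\L\setminus(G_{\max}))$ with matching labels. Your write-up is simply more explicit about the decomposition of $\Irr(\L\setminus(G_{\max}))$ and the matching of the two index sets below the threshold $\omega(G_{\max})$, while the paper leaves these details implicit.
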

\noindent \emph{Proof.} The first statement is a consequence of Claim~\ref{lem:chain-in-restriction}. If $G$ is an irreducible element of $\L$ different from $G_{\max}$, then $G$ belongs to $\L\setminus (G_{\max})$, and we have $A\vee G = B$ in $\L$ if and only if $A\vee G = B$ in $\L\setminus (G_{\max})$, since $\L$ and $\L\setminus (G_{\max})$ have the same join (see Lemma \ref{lem:restriction-lattice}). This proves the two last statements.

\medskip

We are now ready to prove that $\Tilde{\omega}$ is an admissible map. For any $A \leqslant B \in \L\setminus (G)$ by Claim~\ref{lem:chain-in-restriction} the unique saturated chain between $A$ and $B$ with increasing $\lambda_{\omega}$-labels belongs to $\L\setminus (G_{\max}).$ By the proof of Claim~\ref{lem:chain-in-restriction} the $\lambda_{\omega}$-labels of that saturated chain are all strictly less than $\omega(G_{\max})$ except possibly the last one, and so by Claim~\ref{lem:labels-of-restriction} the $\lambda_{\Tilde{\omega}}$-labels are the same, except possibly the last one. In the case that the last labels differ, this means that the last label is of the form $\omega(G_{\max}) + i$ which is greater than all the other labels. In conclusion that saturated chain has increasing $\lambda_{\Tilde{\omega}}$-labels. We have to show that this is the only one. Let $A_0 = A \lessdot A_1 \lessdot \cdots \lessdot A_k = B$ be some saturated chain in $\L\setminus (G)$ with increasing $\lambda_{\Tilde{\omega}}$-labels. By Lemma \ref{lem:labels-of-restriction} the $\lambda_{\omega}$-labels of that chain are obtained from the $\lambda_{\Tilde{\omega}}$-labels by replacing every label greater than $\omega(G_{\max})$ by $\omega(G_{\max}).$ This is easily seen to be weakly increasing as soon as the $\lambda_{\Tilde{\omega}}$-labels are weakly increasing. 
\end{proof}

We include Figure \ref{fig:successive-deletions} which illustrates the successive deletions of building ideals dictated by the proof of Theorem \ref{thm:cvx-ear-dec}. The starting lattice is the Boolean lattice of rank $4$, and the elements of the building sets are circled in each lattice. The value of the admissible map at each irreducible element is written next to the circle in red. In the end the process stops because we arrive at a simplex. 

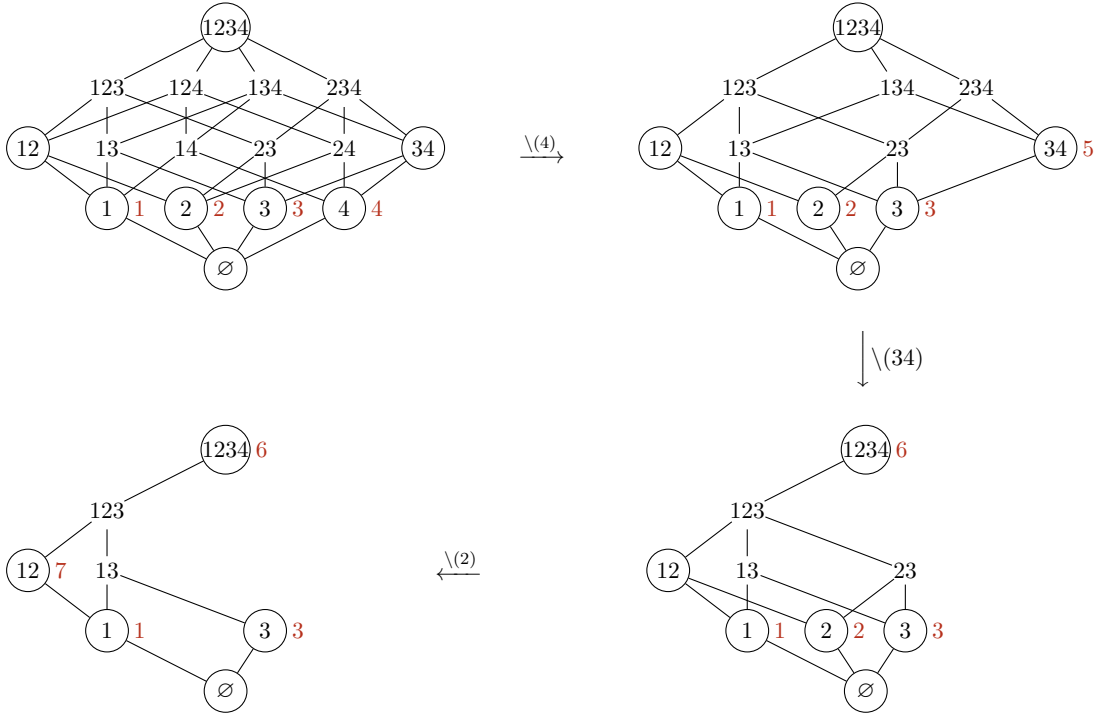
\begin{figure}
  \centering

\begin{tikzpicture}[
    scale=0.8,
    transform shape,
    x=0.65cm,
    node/.style = {circle, draw, minimum size=20pt, inner sep=0pt},
    tnode/.style = {circle, draw = none, minimum size=5pt, inner sep=0pt}
]

%================= Diagramme 1 (haut gauche) =================
\begin{scope}[shift={(0,0)}]

  % Level 0
  \node[node] (e1)   at (0,0)   {$\emptyset$};

  % Level 1
  \node[node] (a1)   at (-3,1)  {$1$};
  \node[node] (b1)   at (-1,1)  {$2$};
  \node[node] (c1)   at ( 1,1)  {$3$};
  \node[node] (d1)   at ( 3,1)  {$4$};

  \node[BrickRed, right=-0.5mm of a1] (a1right) {$1$};
  \node[BrickRed, right=-0.5mm of b1] (b1right) {$2$};
  \node[BrickRed, right=-0.5mm of c1] (c1right) {$3$};
  \node[BrickRed, right=-0.5mm of d1] (d1right) {$4$};

  % Level 2
  \node[node]  (ab1) at (-5,2)  {$12$};
  \node[tnode] (ac1) at (-3,2)  {$13$};
  \node[tnode] (ad1) at (-1,2)  {$14$};
  \node[tnode] (bc1) at ( 1,2)  {$23$};
  \node[tnode] (bd1) at ( 3,2)  {$24$};
  \node[node]  (cd1) at ( 5,2)  {$34$};

  % Level 3
  \node[tnode] (abc1) at (-3,3)  {$123$};
  \node[tnode] (abd1) at (-1,3)  {$124$};
  \node[tnode] (acd1) at ( 1,3)  {$134$};
  \node[tnode] (bcd1) at ( 3,3)  {$234$};

  % Level 4
  \node[node] (abcd1) at (0,4)  {$1234$};

  % Edges
  \draw (e1) -- (a1) (e1) -- (b1) (e1) -- (c1) (e1) -- (d1);

  \draw (a1) -- (ab1) (a1) -- (ac1) (a1) -- (ad1);
  \draw (b1) -- (ab1) (b1) -- (bc1) (b1) -- (bd1);
  \draw (c1) -- (ac1) (c1) -- (bc1) (c1) -- (cd1);
  \draw (d1) -- (ad1) (d1) -- (bd1) (d1) -- (cd1);

  \draw (ab1) -- (abc1) (ab1) -- (abd1);
  \draw (ac1) -- (abc1) (ac1) -- (acd1);
  \draw (ad1) -- (abd1) (ad1) -- (acd1);
  \draw (bc1) -- (abc1) (bc1) -- (bcd1);
  \draw (bd1) -- (abd1) (bd1) -- (bcd1);
  \draw (cd1) -- (acd1) (cd1) -- (bcd1);

  \draw (abc1) -- (abcd1)
        (abd1) -- (abcd1)
        (acd1) -- (abcd1)
        (bcd1) -- (abcd1);

\end{scope}

% Flèche entre diagramme 1 et 2 (horizontale)
\node[tnode] at (8,2) {$\xrightarrow{\setminus (4)}$};

%================= Diagramme 2 (haut droite) =================
\begin{scope}[shift={(16,0)}]

  % Level 0
  \node[node] (e2)   at (0,0)   {$\emptyset$};

  % Level 1
  \node[node] (a2)   at (-3,1)  {$1$};
  \node[node] (b2)   at (-1,1)  {$2$};
  \node[node] (c2)   at ( 1,1)  {$3$};

  \node[BrickRed, right=-0.5mm of a2] (a2right) {$1$};
  \node[BrickRed, right=-0.5mm of b2] (b2right) {$2$};
  \node[BrickRed, right=-0.5mm of c2] (c2right) {$3$};

  % Level 2
  \node[node]  (ab2) at (-5,2)  {$12$};
  \node[tnode] (ac2) at (-3,2)  {$13$};
  \node[tnode] (bc2) at ( 1,2)  {$23$};
  \node[node]  (cd2) at ( 5,2)  {$34$};

  \node[BrickRed, right=-0.5mm of cd2] (cd2right) {$5$};

  % Level 3
  \node[tnode] (abc2) at (-3,3) {$123$};
  \node[tnode] (acd2) at ( 1,3) {$134$};
  \node[tnode] (bcd2) at ( 3,3) {$234$};

  % Level 4
  \node[node] (abcd2) at (0,4) {$1234$};

  % Edges
  \draw (e2) -- (a2) (e2) -- (b2) (e2) -- (c2);

  \draw (a2) -- (ab2) (a2) -- (ac2);
  \draw (b2) -- (ab2) (b2) -- (bc2);
  \draw (c2) -- (ac2) (c2) -- (bc2) (c2) -- (cd2);

  \draw (ab2) -- (abc2);
  \draw (ac2) -- (abc2) (ac2) -- (acd2);
  \draw (bc2) -- (abc2) (bc2) -- (bcd2);
  \draw (cd2) -- (acd2) (cd2) -- (bcd2);

  \draw (abc2) -- (abcd2)
        (acd2) -- (abcd2)
        (bcd2) -- (abcd2);

\end{scope}

% Flèche entre diagramme 2 et 3 (verticale)
\node[tnode] at (16.8,-1.5) {$\bigg\downarrow{\setminus (34)}$};

%================= Diagramme 4 (bas gauche, sous 1) =================
\begin{scope}[shift={(16.2,-7)}]

  % Level 0
  \node[node] (e4)   at (0,0)   {$\emptyset$};

  % Level 1
  \node[node] (a4)   at (-3,1)  {$1$};
  \node[node] (b4)   at (-1,1)  {$2$};
  \node[node] (c4)   at ( 1,1)  {$3$};

  \node[BrickRed, right=-0.5mm of a4] (a2right) {$1$};
  \node[BrickRed, right=-0.5mm of b4] (b2right) {$2$};
  \node[BrickRed, right=-0.5mm of c4] (c2right) {$3$};

  % Level 2
  \node[node]  (ab4) at (-5,2)  {$12$};
  \node[tnode] (ac4) at (-3,2)  {$13$};
  \node[tnode] (bc4) at ( 1,2)  {$23$};

  % Level 3
  \node[tnode] (abc4) at (-3,3) {$123$};

  % Level 4
  \node[node] (abcd4) at (0,4)  {$1234$};
  \node[BrickRed, right=-0.5mm of abcd4] (abcd4right) {$6$};

  % Edges
  \draw (e4) -- (a4) (e4) -- (b4) (e4) -- (c4);
  \draw (a4) -- (ab4) (a4) -- (ac4);
  \draw (b4) -- (ab4) (b4) -- (bc4);
  \draw (c4) -- (ac4) (c4) -- (bc4);
  \draw (ab4) -- (abc4) (ac4) -- (abc4) (bc4) -- (abc4);
  \draw (abc4) -- (abcd4);

\end{scope}

% Flèche entre diagramme 1 et 4 (verticale)
\node[tnode] at (5.9,-4.9) {$\xleftarrow{\setminus (2)}$};

%================= Diagramme 3 (bas droite, sous 2) =================
\begin{scope}[shift={(0,-7)}]

  % Level 0
  \node[node] (e3)   at (0,0)   {$\emptyset$};

  % Level 1
  \node[node] (a3)   at (-3,1)  {$1$};
  \node[node] (c3)   at ( 1,1)  {$3$};

  % Level 2
  \node[node]  (ab3) at (-5,2)  {$12$};
  \node[tnode] (ac3) at (-3,2)  {$13$};

  % Level 3
  \node[tnode] (abc3) at (-3,3) {$123$};

  % Level 4
  \node[node] (abcd3) at (0,4)  {$1234$};

  \node[BrickRed, right=-0.5mm of a3] (a3right) {$1$};
  \node[BrickRed, right=-0.5mm of c3] (c3right) {$3$};
  \node[BrickRed, right=-0.5mm of ab3] (ab3right) {$7$};
  \node[BrickRed, right=-0.5mm of abcd3] (abcd3right) {$6$};
  % Edges
  \draw (e3) -- (a3) (e3) -- (c3);
  \draw (a3) -- (ab3) (a3) -- (ac3);
  \draw (c3) -- (ac3);
  \draw (ab3) -- (abc3) (ac3) -- (abc3);
  \draw (abc3) -- (abcd3);

\end{scope}

\end{tikzpicture}
  \caption{Successive deletions of building ideals}
  \label{fig:successive-deletions}
\end{figure}

\subsection{A formula for the \texorpdfstring{$h$}{h}-vector of a nested set complex}\label{subsec:h-polynomial}
In this subsection we give an explicit formula for the $h$-vector of any nested set complex associated to a graded built lattice with an injective admissible map. This formula generalizes the formula for the $h$-vector of a CL-shellable poset in terms of counting maximal chains with the descent statistics (see \cite[Theorem 3.14.2]{stanley-ec1}). Our first task is to make sense of the descent number of a maximal nested set. Throughout this section, we let $(\L, \G)$ be an irreducible graded built lattice, such that $\L$ admits an injective admissible map $\omega$, with associated labeling $\lambda_{\omega}$. The $h$-polynomial of $\N(\L , \G)$ is denoted $h(\L, \G)(t).$ By Lemma \ref{lem:nested-tree}, any element $G$ in some nested set $\S \in \N(\L, \G)$ has a unique parent in $\S\cup \{\un\}$, which is denoted by $p(G) \coloneqq \min (\S\cup \un)_{> G} $. 

\begin{definition}
Let $\S \in \N(\L, \G)$ be a maximal nested set. For any $G \in \S \cup \un$, we denote $\lambda_{\omega}(G) = \lambda_{\omega}(\bigvee \S_{<G},G)$. We say that an element $G \in \S$ is a \textit{descent} of $\S$ if we have the strict inequality $\lomeg(G) > \lomeg(p(G)).$ The \textit{descent number} of $\S$, denoted $\des(\S)$, is the cardinality 
$$ \des(\S) \coloneqq |\{ \, G \, | \, G \textrm{ is a descent of } \S \, \}|.$$ The \textit{descent $h$-polynomial} of $(\L, \G)$ is 
$$ h_{\des}(\L,\G)(t) \coloneqq \sum_{\substack{\S \in \N(\L, \G) \\ \S \textrm{ maximal}}} t^{\des(\S)}.$$
\end{definition}
Note that Lemma \ref{lem:max-nested-purity} ensures that all the local intervals of $\S$ have rank $1$ and so $\lambda(G)$ is well-defined for any $G \in \S\cup\un.$ The main result of this subsection is the following. 

\begin{proposition}\label{prop:h-poly-descents}
    We have the equality of polynomials 
    \[ h_{\des}(\L, \G)(t)= h(\L, \G)(t).\]
\end{proposition}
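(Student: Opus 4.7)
The plan is to induct on $|\L|$, following the vertex decomposition from Theorem~\ref{thm:vertex decomposability} with shedding vertex $v = G_{\max}$ (when $G_{\max} = \un$, the isomorphism $\N(\L,\G)\simeq c\N(\L_F,\G_F)$ in that proof allows a direct reduction to a smaller built lattice). For a pure simplicial complex $\Delta$ with shedding vertex $v$, the $h$-polynomial satisfies
$$h(\Delta; t) = h(\Delta \setminus v; t) + t \cdot h(\lk_\Delta(v); t),$$
which follows from the $f$-vector identity $f_i(\Delta) = f_i(\Delta \setminus v) + f_{i-1}(\lk_\Delta(v))$. Combining this with Lemma~\ref{lem:restriction-of-nested-cx}, Theorem~\ref{thm:composition-nested-sets}\,(iv), the multiplicativity of the $h$-polynomial under joins, and the induction hypothesis, the proposition reduces to establishing the matching recursion
$$h_{\des}(\L,\G)(t) = h_{\des}(\L\setminus(G_{\max}),\G\setminus G_{\max})(t) + t\cdot h_{\des}(\L^{G_{\max}},\G^{G_{\max}})(t)\cdot h_{\des}(\L_{G_{\max}},\G_{G_{\max}})(t),$$
with $\tilde{\omega}$ (from the proof of Theorem~\ref{thm:vertex decomposability}) on the first sub-built-lattice and the natural restrictions of $\omega$ on the other two.

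I partition the facets of $\N(\L,\G)$ into those not containing $G_{\max}$ and those containing $G_{\max}$. For a facet $\S$ not containing $G_{\max}$, Claim~\ref{lem:labels-of-restriction} shows that $\tilde{\omega}$-labels agree with $\omega$-labels whenever the latter are below $\omega(G_{\max})$, and exceed $\omega(G_{\max})$ exactly when the $\omega$-label equals $\omega(G_{\max})$. Since all $\omega$-labels are at most $\omega(G_{\max})$, this transformation preserves the relative order of any pair $(\lambda(G),\lambda(p(G)))$, except possibly when both equal $\omega(G_{\max})$. This case cannot occur: $\lambda_{\omega}(J^G,G) = \omega(G_{\max})$ forces $G_{\max}\le G\le J^{p(G)}$, which then gives $J^{p(G)}\vee G_{\max} = J^{p(G)}\neq p(G)$, ruling out $\lambda_{\omega}(J^{p(G)},p(G)) = \omega(G_{\max})$. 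Thus descents are preserved in passing from $\omega$ to $\tilde{\omega}$.

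For a facet $\S$ containing $G_{\max}$, I write $\S = \{G_{\max}\}\circ(\S^-,\S^+)$ via Theorem~\ref{thm:composition-nested-sets}, where $\S^-$ is a facet of $\N(\L^{G_{\max}},\G^{G_{\max}})$ and $\S^+$ a facet of $\N(\L_{G_{\max}},\G_{G_{\max}})$. The structural isomorphisms of $\G$ imply that each element $G'/G_{\max}$ (for $G'\in\S^+$) is incomparable with $G_{\max}$ and with every element of $\S^-$; consequently the tree of $\S$ splits cleanly, with $p(G_{\max}) = \un$, parents of $\S^-$-elements in $\S^-\cup\{G_{\max}\}$, and parents of the elements $G'/G_{\max}$ in $\{G''/G_{\max} : G''\in\S^+\}\cup\{\un\}$. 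Lemma~\ref{lem:composition-descent} preserves all rank-1 local interval labels under the composition, so the descent contributions from the non-$G_{\max}$ elements of $\S$ match $\des(\S^-) + \des(\S^+)$ exactly. The main obstacle is to show that $G_{\max}$ is always a descent of $\S$. Since $G_{\max}$ is join-irreducible, it covers a unique element $F$ in $\L^{G_{\max}}$, and by maximality of $\S$ we have $J^{G_{\max}} = \bigvee\S^- = F$. Join-irreducibility of $G_{\max}$ now forces $G_{\max}$ to be the unique irreducible with $F\vee I = G_{\max}$ (any other $I < G_{\max}$ would write $G_{\max} = I\vee F$ as a join of strictly smaller elements), so $\lambda(G_{\max}) = \omega(G_{\max})$. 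On the other hand, $G_{\max}\le J^{\un}$ implies every irreducible $I$ satisfying $J^{\un}\vee I = \un$ is distinct from $G_{\max}$ and hence has $\omega(I) < \omega(G_{\max})$ by $\omega$-maximality; therefore $\lambda(\un) < \omega(G_{\max}) = \lambda(G_{\max})$. This yields $\des(\S) = 1 + \des(\S^-) + \des(\S^+)$, produces the factor of $t$ in the recursion, and closes the induction.
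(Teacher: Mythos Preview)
Your overall strategy matches the paper's exactly: induct on $|\L|$, use the shedding vertex $G_{\max}$ to split the $h$-polynomial, and verify that $h_{\des}$ satisfies the same recursion by partitioning facets according to whether they contain $G_{\max}$. Your treatment of the facets not containing $G_{\max}$ is a careful and correct expansion of what the paper leaves implicit.

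There is, however, one genuine error in the ``facets containing $G_{\max}$'' part. You claim that each element $G'/G_{\max}$ (for $G' \in \S^+$) is incomparable with $G_{\max}$. This fails whenever $G' \in \G$: in that case $G'$ has itself as its unique $\G$-factor, so $G'/G_{\max} = G'$, and $G' > G_{\max}$. (For $\G = \G_{\max}$ this happens for \emph{every} $G' \in \S^+$.) In particular your conclusion $p(G_{\max}) = \un$ need not hold; the parent of $G_{\max}$ in $\S$ can perfectly well be such a $G'$.

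The fix is immediate and does not disturb the rest of your argument. Whatever $P := p(G_{\max})$ is, we have $G_{\max} \in \S_{<P}$ and hence $G_{\max} \le J^P$; thus $J^P \vee G_{\max} = J^P \neq P$, which excludes $G_{\max}$ from the irreducibles computing $\lambda(P)$, and $\omega$-maximality gives $\lambda(P) < \omega(G_{\max}) = \lambda(G_{\max})$. Your other two parent claims are unaffected: even when $G'/G_{\max} > G_{\max}$, the element $G_{\max}$ remains a strictly smaller upper bound for every $H \in \S^-$, so parents of $\S^-$-elements still lie in $\S^- \cup \{G_{\max}\}$; and since no $G'/G_{\max}$ is ever $< G_{\max}$, parents of the $G'/G_{\max}$ still lie in $\{G''/G_{\max}\} \cup \{\un\}$. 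One also checks (from $G'' = G_{\max} \vee (G''/G_{\max})$) that $G' \mapsto G'/G_{\max}$ is an order isomorphism onto its image, so the parent correspondence needed for matching $\des(\S^+)$ goes through. With this correction your proof is complete and coincides with the paper's.
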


\begin{proof}
    The proof goes by induction on the cardinality of $\L$. As in the proof of Theorem \ref{thm:vertex decomposability}, let us denote by $G_{\max}$ the unique join-irreducible element which maximizes $\omega.$ If $G_{\max} = \un$ then $G_{\max}$ covers a unique element $F$ and we have the identification $\N(\L, \G) \simeq c\N(\L_F,\G_F)$. By maximality of $\omega(G_{\max})$ this identification preserves the descent statistics, and so we can conclude by induction using Lemma \ref{lem:reducible}. If $G_{\max} \neq \un$ then the proof of Theorem \ref{thm:vertex decomposability} shows that $G_{\max}$ is a shedding vertex. This implies the recursive equality for the face numbers 
    \[ f_i(\N(\L, \G)) = f_i(\N(\L, \G)\setminus G_{\max}) + f_{i-1}(\lk_{\N(\L, \G)}(G_{\max})) \]
    for every $i$, which implies the equalities of polynomials 
    \begin{align*} 
    h(\L, \G)(t) &= h(\L\setminus (G_{\max}), \G\setminus G_{\max})(t) + t\cdot  h(\lk_{\N(\L, \G)}(G_{\max}))(t)\\
    &= h(\L\setminus (G_{\max}), \G\setminus G_{\max})(t) + t \cdot h(\L_{G_{\max}}, \G_{G_{\max}})(t) \cdot h(\L^{G_{\max}}, \G^{G_{\max}})(t),
    \end{align*}
    where the second equality comes from Theorem \ref{thm:composition-nested-sets}. It remains to prove that $h_{\des}$ satisfies the same recursive relation. We have 
    \begin{align*}
    h_{\des}(\L, \G)(t) &= \sum_{\substack{\S \in \N(\L, \G) \\ \S \textrm{ maximal}}} t^{\des(\S)} = \sum_{\substack{\S \in \N(\L, \G) \\ \S \textrm{ maximal} \\ G_{\max} \notin \S}} t^{\des(\S)} + \sum_{\substack{\S \in \N(\L, \G) \\ \S \textrm{ maximal}\\ G_{\max}\in \S}} t^{\des(\S)} \\ 
    &= h_{\des}(\L\setminus(G_{\max}),\G\setminus G_{\max})(t) \\
    &\qquad+ t \cdot h_{\des}(\L_{G_{\max}}, \G_{G_{\max}})(t)\cdot h_{\des}(\L^{G_{\max}}, \G^{G_{\max}})(t),
    \end{align*}
    where the last equality comes from Lemma \ref{lem:restriction-of-nested-cx}, together with the isomorphism in Theorem \ref{thm:composition-nested-sets} (iv):
    \[\lk_{\N(\L,\G)}(G_{\max}) \simeq \N(\L_{G_{\max}}, \G_{G_{\max}})*\N(\L^{G_{\max}}, \G^{G_{\max}})\] which preserves the descent statistics by Lemma \ref{lem:composition-descent}, and the fact that adding the element $G_{\max}$ to a nested set in the link at $G_{\max}$ always adds one descent by maximality of $\omega(G_{\max}).$
\end{proof}

\begin{example}
Consider $\mathcal{B}_{3}$ the Boolean lattice on three elements $\{1,2,3\}$, with the building set $\G = \{1,2,3,12, 23, 123\}.$ The built lattice $(\mathcal{B}_3, \G)$ has five maximal nested sets, namely 
\begin{align*}
    \S_1 = \{1,12\}, \S_2 = \{2,12\}, \S_3 = \{2,23\}, \S_4 = \{3,23\}, \textrm{ and }\S_5 = \{1,3\}. 
\end{align*}
In $\S_1$, there is no descent so $\des(\S_1) = 0.$ In $\S_2$, only $2$ is a descent, so $\des(\S_2) = 1.$ In $\S_3$, only $23$ is a descent so $\des(\S_3) = 1.$ In $\S_4,$ both $3$ and $23$ are descents so $\des(\S_4) = 2.$ Finally, in $\S_5$, only $3$ is a descent so $\des(\S_5) = 1.$ Summing those contributions we obtain 
$$ h(\mathcal{B}_3, \G)(t) = 1 + 3t + t^2.$$
\end{example}

\begin{remark}
More generally, it is known that any shelling order of a simplicial complex gives a formula for the corresponding $h$-polynomial in terms of counting facets with the restriction statistics (see \cite[Section 2]{bjorner} for instance). It would be interesting to compare the formula obtained this way via the shelling of nested set complexes constructed in \cite{backman-dorpalen-nathanson-partida-prime}, and the formula given by Proposition \ref{prop:h-poly-descents}.
\end{remark}

\section{Convex ear decomposition}\label{sec:conv-ear-dec}
In this section we prove that for any built geometric lattice $(\L, \G)$, the nested set complex $\N(\L,\G)$ admits a convex ear decomposition. Contrary to the previous section, for this result it is necessary to restrict to the setting of matroids/geometric lattices. We refer to \cite{oxley} for any undefined matroid-related terminology. The convex ear decomposition that we will give generalizes that given in \cite[Section~4]{nyman-swartz} in the case of the maximal building set.

\begin{figure}[h!]
\begin{tikzpicture}[
    purplevertex/.style={circle, fill=purpledot, minimum size=8pt, inner sep=0pt},
    orangevertex/.style={circle, fill=orangedot, minimum size=8pt, inner sep=0pt},
    tealvertex/.style={circle, fill=tealdot, minimum size=8pt, inner sep=0pt},
    purpleline/.style={line width=2pt, purpleedge},
    orangeline/.style={line width=2pt, orangeedge},
    tealline/.style={line width=2pt, tealedge},
    label/.style={font=\large\bfseries},
    scale=1
]

% Hexagon vertices (2-element subsets of {1,2,3,4})
% Arranged with 1 at top, going clockwise
\coordinate (v1) at (90:2.5);      % vertex 1
\coordinate (v12) at (30:2.5);     % vertex 12
\coordinate (v2) at (-30:2.5);     % vertex 2
\coordinate (v23) at (-90:2.5);    % vertex 23
\coordinate (v3) at (-150:2.5);    % vertex 3
\coordinate (v13) at (150:2.5);    % vertex 13

% Interior vertices
\coordinate (v4) at (0, 0);        % vertex 4 (center)
\coordinate (v14) at (90:1.2);     % vertex 14
\coordinate (v24) at (-30:1.2);    % vertex 24
\coordinate (v34) at (-150:1.2);   % vertex 34

% Draw purple hexagon edges
\draw[purpleline] (v1) -- (v12);
\draw[purpleline] (v12) -- (v2);
\draw[purpleline] (v2) -- (v23);
\draw[purpleline] (v23) -- (v3);
\draw[purpleline] (v3) -- (v13);
\draw[purpleline] (v13) -- (v1);

% Draw orange tree structure
\draw[orangeline] (v1) -- (v14);
\draw[orangeline] (v14) -- (v4);
\draw[orangeline] (v4) -- (v24);
\draw[orangeline] (v24) -- (v2);

% Draw teal edges
\draw[tealline] (v3) -- (v34);
\draw[tealline] (v34) -- (v4);

% Draw purple hexagon vertices
% Vertex 1: concentric (purple outer ring, orange inner) to show intersection
\fill[purpledot] (v1) circle (4pt);
\fill[orangedot] (v1) circle (2.5pt);
\node[purplevertex] at (v12) {};
% Vertex 2: concentric (purple outer ring, orange inner) to show intersection
\fill[purpledot] (v2) circle (4pt);
\fill[orangedot] (v2) circle (2.5pt);
\node[purplevertex] at (v23) {};
% Vertex 3: concentric (purple outer ring, teal inner) to show intersection
\fill[purpledot] (v3) circle (4pt);
\fill[tealdot] (v3) circle (2.5pt);
\node[purplevertex] at (v13) {};

% Draw interior vertices
\node[orangevertex] at (v14) {};
% Vertex 4: concentric (orange outer ring, teal inner) to show intersection
\fill[orangedot] (v4) circle (4pt);
\fill[tealdot] (v4) circle (2.5pt);
\node[orangevertex] at (v24) {};
\node[tealvertex] at (v34) {};

\end{tikzpicture}
\caption{A convex ear decomposition of $\N(\mathrm{U}_{3, 4}, \G_{\max})$, for the uniform matroid on of rank $3$ on $4$ elements, with the maximal building set.}
\label{fig:CED-U34}
\end{figure}
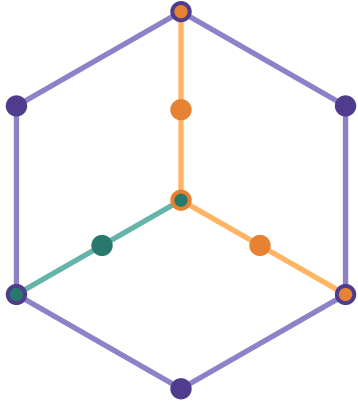

Let $\L$ be a geometric lattice, and let $\vartriangleleft$ be a total order on the set $\At(\L) = \Irr(\L)$ of atoms of $\L$, with induced edge-labeling $\lambda$. Here we consider a slight adaptation of the notions of Section~\ref{subsec:admissible-lattices}, where our edge-labeling takes values not in $\Z_{>0}$ but in the totally ordered set $(\At(\L), \vartriangleleft)$. This is similar to choosing an injective map $\At(\L) \hookrightarrow \Z_{>0}$, but, having the labels take values directly in $\At(\L)$ will simplify notations. Recall that Lemma \ref{lem:reducible} showed that for any built lattice $(\L, \G)$, we have a notion of restriction of $(\L, \G)$ at any element of $\L.$ When $\L$ is geometric, we have a finer notion of restriction, which allows us to restrict to any subset of atoms of $\L$. For any $S \subset \At(\L)$, let us denote by $\L_{|S}$ the sub-join-lattice of $\L$ generated by $S$. More concretely, this is the set of elements of $\L$ which can be written as a join of elements of $S.$  It is a classical fact of matroid theory that $\L_{|S}$ is again a geometric lattice. For instance, if $S$ is a basis of $\L$, then $\L_{|S}$ is a Boolean lattice. Let us also denote $\G_{|S} \coloneqq \G \cap \L_{|S}.$ The proof of the following lemma is analogous to that of Lemma \ref{lem:reducible} so we omit it. 
\begin{lemma}
The subset $\G_{|S} \subset \L_{|S}$ is a building set of $\L_{|S}.$
\end{lemma}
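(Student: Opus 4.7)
The plan is to show that for each $F \in \L_{|S}$, the $\G$-factors of $F$ in $\L$ already lie in $\G_{|S}$ and restrict to give a structural isomorphism for $\G_{|S}$ at $F$ inside $\L_{|S}$. Write $F = \bigvee T$ with $T \coloneqq \{a \in S : a \leqslant F\}$, and let $G_1, \ldots, G_k$ be the $\G$-factors of $F$ in $\L$, with associated isomorphism $\psi_F \colon \prod_j [\zero, G_j] \xrightarrow{\sim} [\zero, F]$.

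The main step is to show that $G_j \in \L_{|S}$ for every $j$. Since $\L$ is atomic, each atom $a \in T$ corresponds under $\psi_F^{-1}$ to an atom of the product, which forces $a$ to lie below a unique factor $G_j$; this partitions $T = T_1 \sqcup \cdots \sqcup T_k$ with $T_j \subseteq S \cap [\zero, G_j]$. Setting $G'_j \coloneqq \bigvee T_j \leqslant G_j$, we have $\bigvee_j G'_j = F = \bigvee_j G_j$, and the injectivity of $\psi_F$ then forces $G'_j = G_j$; hence $G_j = \bigvee T_j \in \L_{|S}$, so $G_j \in \G_{|S}$. It is then routine to verify that $\{G_1,\ldots,G_k\}$ is precisely $\max (\G_{|S})_{\leqslant F}$: each $G_j$ is maximal already in $\G_{\leqslant F}$, and any $H \in \G_{|S}$ below $F$ lies below some $\G$-factor of $F$, which itself belongs to $\G_{|S}$.

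It remains to verify that $\psi_F$ restricts to an isomorphism $\prod_j [\zero, G_j]_{|S} \xrightarrow{\sim} [\zero, F]_{|S}$, where the subscript $|S$ denotes the interval taken inside $\L_{|S}$. Both the containment $\psi_F\bigl(\prod_j [\zero, G_j]_{|S}\bigr) \subseteq [\zero, F]_{|S}$ and the existence of an inverse follow from the same atom-under-factor partition: any $H \in [\zero, F]_{|S}$ equals $\bigvee U$ for $U \coloneqq \{a \in S : a \leqslant H\}$, and splitting $U = U_1 \sqcup \cdots \sqcup U_k$ according to which $G_j$ each atom of $U$ lies below gives $\psi_F^{-1}(H) = (\bigvee U_j)_j \in \prod_j [\zero, G_j]_{|S}$. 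Since $\psi_F$ is already a poset isomorphism, its restriction to these sub-posets is too. I expect the main obstacle to be the first step, which is where both the atomicity of $\L$ and the injectivity of $\psi_F$ are essential; once the $G_j$ are known to sit inside $\L_{|S}$, the rest is bookkeeping.
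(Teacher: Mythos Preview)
Your argument is correct. The paper actually omits this proof entirely, remarking only that it is ``analogous to that of Lemma~\ref{lem:reducible}''; your write-up supplies precisely the details that analogy would demand, namely that for each $F\in\L_{|S}$ the $\G$-factors of $F$ already lie in $\L_{|S}$ (using that $F$ is a join of atoms in $S$ and the injectivity of the structural isomorphism $\psi_F$), after which the structural isomorphism restricts verbatim.
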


Recall that a broken circuit of $\L$, relative to the order $\vartriangleleft$, is a set of atoms of the form $C\setminus \min_{\vartriangleleft}C$, for any curcuit $C$ of $\L$. An \emph{nbc-basis} of $\L$ is a basis which does not contain any broken circuit. Denote by $B_1,\ldots, B_m$ the nbc-bases of $\L$ in lexicographic order with respect to $\vartriangleleft$. For $1 \leqslant i \leqslant m$, let us denote by $\Sigma_i$ the subcomplex $\N(\L_{|B_i}, \G_{|B_i}) \subset \N(\L,\G).$ More concretely, $\Sigma_i$ is the union of the simplices of $\N(\L, \G)$ whose vertices are elements of $\G$ which can be expressed as joins of elements in $B_i$. On the other hand, let us denote by $\Delta_i$ the union of the facets $\S$ of $\Sigma_i$ such that we have $\lambda(\S) = B_i$, where 
$$\lambda(\S) \coloneqq \{\lambda(G) \, | \, G \in \S\cup \un\}$$
(see Section~\ref{subsec:h-polynomial} for the definition of $\lambda(G)$).
The rest of this section will be devoted to proving the following theorem. 

\newtheorem*{thm:intro2}{Theorem~\ref{thm:cvx-ear-dec}}
\begin{thm:intro2}
    The sequence $\Delta_1, \ldots, \Delta_m$ is a convex ear decomposition of $\N(\L, \G).$
\end{thm:intro2}

This theorem is implied by the following series of lemmas, which follows the same line of argument as in \cite[Section 4]{nyman-swartz}.
\begin{lemma}
We have the equality $\N(\L, \G) = \bigcup_i\Delta_i.$    
\end{lemma}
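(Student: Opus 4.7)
The strategy is to prove that every facet of $\N(\L,\G)$ lies in some $\Delta_i$; since $\N(\L,\G)$ is pure of dimension $n-2$ by \Cref{lem:max-nested-purity} (where $n$ is the rank of $\L$), this suffices.

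Fix a facet $\S$ of $\N(\L,\G)$. By \Cref{lem:max-nested-purity} every local interval $L^G(\S)$ for $G \in \S \cup \{\un\}$ has rank $1$, so each $\lambda(G)$ is a single atom of $\L$. The key claim is that $B(\S) \coloneqq \{\lambda(G) : G \in \S \cup \{\un\}\}$ consists of $n$ distinct atoms forming an nbc-basis of $\L$, and that $\S \subset \L_{|B(\S)}$. Granting this, $\S$ is an $(n-2)$-dimensional face of $\Sigma_i = \N(\L_{|B_i},\G_{|B_i})$ for the index $i$ with $B_i = B(\S)$, hence a facet of $\Sigma_i$; since $\lambda(\S) = B(\S) = B_i$, we get $\S \in \Delta_i$ by definition of $\Delta_i$.

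I would prove that $B(\S)$ is an nbc-basis by induction on $n$, using the composition structure of nested sets. Pick a minimal element $G_0 \in \S$, which must be an atom by \Cref{lem:max-nested-purity}; then \Cref{thm:composition-nested-sets}(iv) writes $\S = \{G_0\} \circ \S'$ for some maximal $\S' \in \N(\L_{G_0}, \G_{G_0})$. By \Cref{lem:composition-descent}, the labels of $\S' \cup \{\un\}$ (with respect to the restriction of the atom order to $\L_{G_0}$) agree with the labels of $(\S \setminus \{G_0\}) \cup \{\un\}$, so by the induction hypothesis they form an nbc-basis of $\L_{G_0}$. Adjoining $G_0 = \lambda(G_0)$ yields a basis of $\L$ via the structural isomorphism, and the classical fact that an atom together with an nbc-basis of its contraction gives an nbc-basis of the ambient geometric lattice completes the step. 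The containment $\S \subset \L_{|B(\S)}$ is a separate induction along the tree structure of $\S$ (\Cref{lem:nested-tree}): from $G = \bigvee \S_{<G} \vee \lambda(G)$, the subjoin $\bigvee \S_{<G}$ is a join of labels by the induction hypothesis applied to the children of $G$.

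The main obstacle is the inductive step for the nbc property. The classical argument in \cite{nyman-swartz} is phrased in terms of maximal chains in $\L$, and to transport it to tree-shaped nested sets one must carefully track how the label of a rank-one cover in a contracted lattice $\L_{G_0}$ lifts through the structural isomorphism $\psi_{G_0}$ of the building set. \Cref{lem:composition-descent} is the precise tool that makes this identification work, and its role in the induction is the crux of the argument.
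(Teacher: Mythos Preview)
Your approach is correct but differs from the paper's. The paper argues directly, without induction: if $\lambda(\S)$ contained a broken circuit $C\setminus c$ with $c=\min_\vartriangleleft C$, one takes the unique minimum $G$ of $(\S\cup\{\un\})_{\geqslant\sigma(C)}$ via \Cref{lem:nested-tree}; then $\sigma(C)\nleqslant\bigvee\S_{<G}$ forces $\lambda(G)\in C\setminus c$, while $c\vee\bigvee\S_{<G}=G$ (since $C$ is a circuit) forces $\lambda(G)\trianglelefteqslant c$, a contradiction. No contraction, no inductive bookkeeping. Your inductive route via $\S=\{G_0\}\circ\S'$ can be completed, but two points deserve care. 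First, the ``classical fact'' you invoke about nbc-bases and contraction is not standard for an \emph{arbitrary} atom $G_0$: it holds only under the extra hypothesis that each lifted label $b$ is the minimal atom of $\L$ with $G_0\vee b$ equal to its image in $\L_{G_0}$ --- this is true here, but it must be checked from the definition of $\lambda$, and the fact itself then needs a short case analysis on how a hypothetical broken circuit of $\L$ meets $G_0$. Second, your appeal to ``the structural isomorphism $\psi_{G_0}$'' is a red herring, since $G_0$ is an atom and $\psi_{G_0}$ is trivial; the genuine tool for matching labels across the contraction is \Cref{lem:composition-descent}, as you correctly note elsewhere. The paper's argument is shorter and more transparent; your version has the modest virtue of making the containment $\S\subset\L_{|\lambda(\S)}$ explicit, which the paper treats as evident.
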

\begin{proof}
It is enough to prove that for any facet $\S$ of $\N(\L, \G)$, the set of atoms $\lambda(\S)$ is an nbc-basis of $\L$. First notice that $\lambda(\S)$ is an $n$-element set whose join is $\un$, and so it is a basis. Assume by contradiction that $\lambda(\S)$ contains a broken circuit $C\setminus c$, with $c = \min_{\vartriangleleft} C$. By Lemma~\ref{lem:nested-tree}, the set $\S_{\geqslant \sigma(C)}$ has a unique minimum, where $\sigma(C)$ denotes the matroid closure of $C$, that is, $\sigma(C) = \bigvee C.$ We denote this minimum by $G.$ By the inequality $G\geqslant \sigma(C)$ and our assumption,  we have the inclusion $\lambda(\S_{\leqslant G}) \supset C\setminus c$. By minimality of $G$, $\sigma(C)$ is not below any element of $\max \S_{<G}$ and so it is not below $\bigvee \S_{<G}$, by the structural isomorphism $[\zero,\bigvee \S_{<G}]  \simeq \prod_{G' \in \max \S_{<G}}[\zero, G']$, since a circuit in a product of two matroids must belong to one of the two matroids. This implies that we must have $\lambda(G) \in C\setminus c$, but since $C$ is a circuit we have $G = c \vee \left(\bigvee \S_{<G}\right)$, which contradicts the inequality $c \vartriangleleft \min C\setminus c$ and the definition of $\lambda(G).$
\end{proof}
\begin{lemma}
We have $\Sigma_1 = \Delta_1$, and for  $2 \leqslant i \leqslant m$, we have $\Delta_i \subsetneq \Sigma_i$.
\end{lemma}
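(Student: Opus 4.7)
The plan is to handle the two parts of the statement in turn.

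For $\Sigma_1 = \Delta_1$: I will show that every facet $\S$ of $\Sigma_1$ satisfies $\lambda(\S) = B_1$. The lex-smallest nbc-basis $B_1$ coincides with the greedy basis, so for every atom $a \notin B_1$ one has $a \in \sigma(B_1 \cap \{x : x \vartriangleleft a\})$; this produces a circuit $C' \ni a$ contained in $\{a\} \cup (B_1 \cap \{x : x \vartriangleleft a\})$. Fix $G \in \S \cup \un$ and set $H := \bigvee \S_{<G}$; since $G$ covers $H$ in the Boolean lattice $\L_{|B_1}$, we have $G = H \vee b$ for a unique $b \in B_1$ with $b \not\leqslant H$. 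Assume for contradiction that $\lambda(G) = a \vartriangleleft b$ in $\L$; then $a \notin B_1$, and the equality $a \vee H = b \vee H = G$ yields a second circuit $C$ through both $a$ and $b$ with $C \setminus \{a, b\} \subseteq B_1$. Strong circuit elimination applied to $(C, C')$, eliminating $a$ and retaining $b$, produces a circuit $C'' \ni b$ with $C'' \subseteq (C \cup C') \setminus \{a\} \subseteq B_1$, contradicting the independence of $B_1$. Hence $\lambda(G) \in B_1$ for every such $G$, so $\lambda(\S) \subseteq B_1$; being an nbc-basis of size $n$, it equals $B_1$.

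For $\Delta_i \subsetneq \Sigma_i$ when $i \geqslant 2$: the inclusion $\Delta_i \subseteq \Sigma_i$ holds by definition, so it suffices to exhibit a facet of $\Sigma_i$ with $\lambda(\S) \neq B_i$. The key combinatorial step is to convert the hypothesis $B_i \neq B_1$ into the following concrete witness: there exist $b \in B_i$ and an atom $a \notin B_i$ with $a \vartriangleleft b$ and $a \notin \sigma(B_i \setminus \{b\})$. I prove this by contrapositive: if no such pair $(a, b)$ exists, then for every atom $a \notin B_i$ the fundamental circuit of $a$ with respect to $B_i$ is contained in $\{a\} \cup (B_i \cap \{x : x \vartriangleleft a\})$, which is exactly the characterization of the greedy basis; a straightforward induction on $k$ then forces $b_{i,k} = b_{1,k}$ for every $k$, so $B_i = B_1$, contradiction.

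Given $(a, b)$ as above, set $H := \bigvee (B_i \setminus \{b\}) \in \L_{|B_i}$, a rank-$(n-1)$ flat. The $\G_{|B_i}$-factors $G_1, \ldots, G_s$ of $H$ form a nested set whose join is $H$; the purity of nested set complexes (\Cref{lem:max-nested-purity}) applied to each local interval $([\zero, G_j], \G_{|B_i}^{G_j})$ provides a maximal nested set there, and the composition of \Cref{thm:composition-nested-sets}~(iv) assembles these data into a facet $\S$ of $\Sigma_i$ with $\bigvee \S = H$. For this facet, $\lambda(\un)$ equals the smallest atom of $\L$ outside $H$; since $a \notin H$ and $a \vartriangleleft b$, we get $\lambda(\un) \vartriangleleft b$. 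Moreover every atom of $B_i$ that is $\vartriangleleft b$ already lies in $B_i \setminus \{b\} \subseteq H$, so $\lambda(\un) \notin B_i$, which forces $\lambda(\S) \neq B_i$ and hence $\S \in \Sigma_i \setminus \Delta_i$.

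The main obstacle is the witness claim of the second part: distilling from the single hypothesis $B_i \neq B_1$ a specific hyperplane of $\L_{|B_i}$ whose complement in $\L$ contains a small atom outside $B_i$. The essential trick is the reformulation of the greedy basis characterization in terms of fundamental circuits. Once the witness is in hand, the construction of the desired facet is a routine application of the structural tools of Section~\ref{sec:prelim}.
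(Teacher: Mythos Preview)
Your proof is correct, but it takes a genuinely different route from the paper in both parts.

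For $\Sigma_1 = \Delta_1$, the paper argues globally: for every $i$ and every facet $\S$ of $\Sigma_i$, the pointwise inequality $\lambda(G) \trianglelefteqslant \lambda_{\L_{|B_i}}(G)$ forces $\lambda(\S) \trianglelefteqslant B_i$ in lexicographic order; combined with the \emph{previous} lemma (that $\lambda(\S)$ is always an nbc-basis), the case $i=1$ follows immediately since $B_1$ is lex-minimal. Your argument is instead local and self-contained: you pin down the unique $b \in B_1$ with $G = H \vee b$ and rule out any smaller label via strong circuit elimination against the greedy-basis circuit $C'$. This avoids invoking the previous lemma entirely, at the cost of a short matroid computation.

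For $\Delta_i \subsetneq \Sigma_i$, the paper argues by contradiction: assuming $\Delta_i = \Sigma_i$, it builds the ``increasing chain'' facet $\{b_1\}\circ\{b_1\vee b_2\}\circ\cdots$ inside $\L_{|B_i}$ and analyses where the first missing element $b_k$ of $B_1$ sits in the tree of $\S$, extracting a contradiction from the labels. You instead produce the required facet constructively. The key step---your contrapositive reducing ``no witness $(a,b)$'' to the fundamental-circuit characterization of the greedy basis---is a nice trick, and once the witness $(a,b)$ is in hand, taking any facet with $\bigvee \S = \sigma(B_i\setminus\{b\})$ makes $\lambda(\un) \notin B_i$ transparent. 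Your approach is arguably more direct and yields an explicit facet of $\Sigma_i\setminus\Delta_i$; the paper's approach has the advantage that its first observation ($\lambda(\S)\trianglelefteqslant B_i$) is reused in the subsequent lemma.

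One cosmetic remark: in your last line of Part~1 you appeal to ``being an nbc-basis of size $n$''; what you have actually shown is the stronger statement $\lambda(G)=\lambda_{\L_{|B_1}}(G)$ for each $G$, so that $G\mapsto\lambda(G)$ is a bijection $\S\cup\{\un\}\to B_1$ and no appeal to the nbc property is needed.
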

\begin{proof}
Notice that for every $i$, for every facet $\S$ of $\Sigma_i$ and all $G \in \S$ we have the inequality $\lambda(G) \trianglelefteqslant \lambda_{\L_{|B_i}}(G)$, where $\lambda_{\L_{|B_i}}$ means that we consider the nested set in the built lattice $(\L_{|B_i}, \G_{|B_i})$. This means that we have the inequality $\lambda(\S) \trianglelefteqslant \lambda_{\L_{|B_i}}(\S) = B_i$ in lexicographic order. As a consequence, when $i = 1$ we must have $\lambda(\S) = B_1$ and so we get the claimed equality $\Sigma_1 = \Delta_1.$ 

Let $1 \leqslant i \leqslant m$ be an index such that we have the equality $\Delta_i = \Sigma_i.$ We aim to prove that $i = 1$. Denote $B_1= \{b_1 \vartriangleleft \cdots \vartriangleleft b_n\}$, and suppose for contradiction that $k = \min \{ k \, | \, b_k \notin B_i\}.$ By definition of $B_1$, the elements of $B_i$ can be written as $b_1 \vartriangleleft \cdots \vartriangleleft b_{k-1} \vartriangleleft b'_k \vartriangleleft \cdots \vartriangleleft b'_n,$ for some atoms $b'_k, \cdots, b'_n,$ with $b_{k-1} \vartriangleleft b_k \vartriangleleft b'_{k}$. Consider the following facet of $\Sigma_i$
\[
\S = \{b_1\}\circ \cdots \circ \{b_1\vee \cdots \vee b_{k-1}\}\circ \{b_1\vee \cdots \vee b_{k-1}\vee b'_{k}\}\circ \cdots \circ \{b_1\vee \cdots \vee b_{k-1}\vee b'_k\vee \cdots \vee b'_{n-1}\}.
\]  Since $\Delta_i= \Sigma_i$ we have $\lambda(\S) = B_i.$ By Lemma \ref{lem:nested-tree}, the set $\S_{\geqslant b_k}$ has a unique minimum which we denote by $G$. By minimality of $G$, the atom $b_k$ is not below any element of $\max \S_{<G}$. Additionally, by the structural isomorphism of $\G$ at $\bigvee \S_{< G}$,
\[
[\zero, \bigvee \S_{<G}]\simeq \prod_{G' \in \max \S_{<G}}[\zero,G'], 
\] the atom $b_k$ is not below $\bigvee \S_{<G}$. Therefore, we have the equality $\left(\bigvee \S_{<G}\right) \vee b_k = G.$ By definition, this implies that we have $\lambda(G)  \vartriangleleft b_k$, and so we must have $\lambda(G) = b_j$ for some $1 \leqslant j \leqslant k-1$. Because of the order of composition that we have chosen for $\S$, this implies that we have $\lambda(\S_{\leqslant G}) \subset \{b_1,\ldots, b_{k-1}\},$ but then this implies $b_k \leqslant b_1\vee \cdots \vee b_{k-1}$ which is a contradiction. 
\end{proof}
\begin{lemma}\label{lem:ball-sphere}
The simplicial complex $\Delta_1$ is isomorphic to the boundary complex of an $(n-2)$-simplicial polytope, and for $2 \leqslant i \leqslant m$ the simplicial complex $\Delta_i$ is an $(n-2)$-ball which is isomorphic to a proper subcomplex of the boundary complex of an $(n-2)$-simplicial polytope.     
\end{lemma}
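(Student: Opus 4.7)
The proof proceeds by identifying the ambient complex $\Sigma_i$ as the boundary of a simplicial polytope and then, for $i \geqslant 2$, realizing $\Delta_i$ as an initial segment of a shelling of $\Sigma_i$. The key reduction is that each $B_i$ is a basis of $\L$, so the sub-join-lattice $\L_{|B_i}$ is isomorphic to the Boolean lattice $\mathcal{B}_n$ and hence $\Sigma_i = \N(\mathcal{B}_n, \G_{|B_i})$ is the nested set complex of a Boolean lattice with respect to the induced building set. I would first invoke the classical identification, due to Feichtner--Sturmfels and Postnikov, of $\N(\mathcal{B}_n, \G')$ with the boundary complex of the polar dual of the associated nestohedron $P_{\G'}$; this realizes $\Sigma_i$ as the boundary of an $(n-1)$-dimensional simplicial polytope, and in particular as an $(n-2)$-sphere. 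The case $i = 1$ is then immediate, since $\Sigma_1 = \Delta_1$ by the previous lemma.

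For $i \geqslant 2$, the previous lemma gives the proper inclusion $\Delta_i \subsetneq \Sigma_i$, and $\Delta_i$ is by construction a union of facets of $\Sigma_i$, hence pure of dimension $n-2$. To promote purity to the ball property, my plan is to exhibit a shelling of $\Sigma_i$ whose initial segment is exactly $\Delta_i$, and then invoke the theorem of Danaraj--Klee that a proper initial segment of a shelling of a simplicial PL-sphere is a shellable PL-ball. The required shelling would be obtained by adapting the descent-based shelling underlying \Cref{prop:h-poly-descents}: order the facets of $\Sigma_i$ lexicographically by their $\lambda$-labels with respect to $\vartriangleleft$. Since every facet $\S$ of $\Sigma_i$ satisfies $\lambda(\S) \trianglelefteqslant B_i$ (by the argument in the preceding lemma), the facets with $\lambda(\S) = B_i$ form the first block in this order and yield exactly the subcomplex $\Delta_i$.

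The main obstacle is verifying that this lexicographic ordering is indeed a shelling of $\Sigma_i$. This should follow from two ingredients. First, the vertex decomposition of \Cref{thm:vertex decomposability}, when applied inside $\Sigma_i$ using the admissible labeling on $\L_{|B_i}$ induced by $\vartriangleleft$, can be iterated to produce a shelling whose facets appear in lexicographic $\lambda$-order; the shedding vertex at each stage is the $\omega$-maximal element of the current building set, so facets with common $\lambda$-prefix are grouped together. Second, the label compatibility of \Cref{lem:composition-descent} ensures that the labels computed in $\L_{|B_i}$ agree with those of the ambient labeling $\lambda$, so that the blocks in the shelling match the $\lambda$-levels coming from $(\L, \G)$. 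Once this is in place, the purity and the proper inclusion $\Delta_i \subsetneq \Sigma_i$ combine with Danaraj--Klee to yield the desired ball, completing the proof.
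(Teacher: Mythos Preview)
Your overall architecture---identify $\Sigma_i$ as the boundary of a simplicial polytope and then realize $\Delta_i$ as a shellable proper full-dimensional subcomplex---matches the paper's. But the implementation has a genuine gap at the step where you claim that ``the labels computed in $\L_{|B_i}$ agree with those of the ambient labeling $\lambda$.'' This is false: for a cover relation $F_1 \lessdot F_2$ in $\L_{|B_i}$, the ambient label $\lambda(F_1,F_2)$ is the $\vartriangleleft$-minimum over \emph{all} atoms of $\L$ taking $F_1$ to $F_2$, and this minimum can be achieved by an atom outside $B_i$. Indeed, the proof of the preceding lemma records only the inequality $\lambda(G) \trianglelefteqslant \lambda_{\L_{|B_i}}(G)$, which can be strict. \Cref{lem:composition-descent} does not help here: it concerns compatibility of labels under composition of nested sets within a fixed lattice, not under restriction to a sub-join-lattice. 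Consequently, the vertex-decomposition shelling of $\Sigma_i$---which is entirely intrinsic to the Boolean lattice $\L_{|B_i}$ and for which \emph{every} facet has internal label set $B_i$---has no built-in mechanism for singling out the facets with ambient $\lambda(\S) = B_i$, so there is no reason $\Delta_i$ should appear as an initial block. A second, smaller gap is that you do not verify $\Delta_i \neq \varnothing$; this is not automatic and the paper establishes it by exhibiting a specific facet and invoking the nbc property of $B_i$.

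The paper avoids the label-mismatch problem by shelling $\Delta_i$ directly rather than as an initial segment of a shelling of $\Sigma_i$: it attaches to each facet $\S$ of $\Delta_i$ a word $\w(\S)$ built from the ambient labels (all in $B_i$, by the defining condition $\lambda(\S)=B_i$) and orders the facets by reverse lexicographic order on these words. The shelling verification produces, for each pair $\S \lexop \S'$, an intermediate facet $\S''$ via a local swap, and the delicate point---which uses precisely the nbc property of $B_i$ and is where your approach would also have to do real work---is checking that the ambient labels of $\S''$ remain in $B_i$, so that $\S'' \in \Delta_i$.
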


Before starting the proof of that lemma, we introduce a new notation. For any maximal nested set $\S\in \N_{\max}(\L,\G)$, recall from Lemma \ref{lem:nested-tree} that $\S$ has the structure of a tree, and that each node $G\in \S\cup \un$ of that tree has a label $\lambda(G)$. From this we can define the word $\w(\S)$ in the alphabet $\lambda(\S)$ by first reading the maximal letter $\lambda(G)$ among all leaves of $\S$, and then read the other letters inductively. More precisely, we set by induction
$$ \w(\L) \coloneqq \lambda(G_0)\w(\S\setminus G_0),$$
where $G_0 \in \min \S$ is such that $\lambda(G_0) = \max_{\vartriangleleft} \{\lambda(G) \, | \, G \in \min \S\}.$ For instance, if $\S$ is a chain of elements, then $\w(\S)$ simply reads the letters $\lambda(\S)$ from bottom to top.  
\begin{proof}[Proof of Lemma \ref{lem:ball-sphere}.]
If $i=1$, by the previous lemma we have $\Delta_1 \simeq \N(\L_{|B_1}, \G_{|\B_1})$. By \cite[Theorem~4.2]{feichtner-muller}, the simplicial complex $\N(\L_{|B_1}, \G_{|B_1})$ can be obtained as a sequence of stellar subdivisions of $\N(\L_{|B_1}, \un \cup B_1)$, which is the boundary of the $(n-1)$-simplex. 

For $2 \leqslant i \leqslant m$, by the previous lemma we have $\Delta_i \subsetneq \Sigma_i,$ where $\Sigma_i$ is the boundary complex of an $(n-2)$-simplicial polytope, for the same reasons as explained above. Thus, it is enough to show that $\Delta_i$ is nonempty and shellable. Denote by $b_1 \vartriangleleft \cdots \vartriangleleft b_n$ the elements of $B_i$, and denote $\S = \{b_n\}\circ \cdots \circ \{b_n\vee \cdots \vee b_2\}$. Let us prove that we have $\lambda(\S) = B_i,$ which will imply that $\Delta_i$ is nonempty. By contradiction denote by $G$ an element of $\S$ such that $\lambda(G)$ is not in $B_i$. By construction of $\S$ we have $\bigvee \S_{<G}\vee b_i = G$ for some $i$, with $\bigvee \S_{<G} = \bigvee_{j\in  J} b_j$ where $J$ is some set of indices all strictly larger than $i.$ If $\lambda(G) = \lambda(\bigvee \S_{<G}, G)$ is not $b_i$ then it must be some atom $b \vartriangleleft b_i.$ The fundamental circuit of $b$ with respect to $\{b_i\} \cup \{b_j \, | \, j \in J\}$ gives a broken circuit contained in $B_j$ which contradicts the fact that $B_j$ is an nbc-basis. 

We now turn our attention toward the shellability of $\Delta_i.$ Let us denote by $\lexop$ the total order on the facets of $\Delta_i$ given by the reverse lexicographic order on the words $\w(\S)$. Let us prove that $\lexop$ is a shelling order of $\Delta_i.$ Let $\S \lexop \S'$ be two facets of $\Delta_i.$ We need to find a third facet $\S''$ of $\Delta_i$ such that we have the inclusion $\S\cap \S' \subset \S''$, the inequality $\S'' \propto \S',$ and $\S''\cap \S'$ has cardinality $|\S'| - 1.$ Let us consider the first place at which the words $\w(\S)$ and $\w(\S')$ differ, and denote by $\lambda(G)$, $\lambda(G')$ the letters of those words at that place respectively. By assumption we have $\lambda(G') \vartriangleleft \lambda(G).$ Since $\w(\S)$ and $\w(\S')$ agree on the letters before, we have the equality $\S_{<G} = \S'_{<G'}.$ Let us denote by $G''$ the unique minimum of $(\S\cap \S')_{>G\vee G'}.$ We have the equality $\bigvee (\S\cap \S')_{<G} = \bigvee \S_{<G} = \bigvee \S'_{<G'}.$ We denote by $F$ this element. By Theorem \ref{thm:composition-nested-sets}, there exist nested sets $(\S_G)_G$ and $(\S'_G)_G$ in each local interval of $\S\cap \S'$ such that $\S = (\S\cap\S')\circ (\S_G)_G$ and $(\S\cap\S')\circ(\S'_G)_G$. In the local interval $([F, G''], \G^F_{G''})$, we have the inequality $\S_{G''} \lexop \S'_{G''}$. This implies the following claim. 

\medskip 

\noindent \textbf{Claim.} The nested set $\S'_{G''}$ has an ascent.\\
\emph{Proof.} By restriction and contraction we can assume $G'' = \un $ and $F = \zero.$ First notice that by Lemma \ref{lem:composition-descent}, the word associated to the facet $\S_0 = \{b_n\}\circ \cdots \circ \{b_n\vee \cdots \vee b_2\} \in \Delta_i$ is $\w(\S_0) = b_nb_{n-1}\ldots b_1,$ which is minimal for $\lexop.$ Furthermore, notice that $\S_0$ has only descents. By Proposition~\ref{prop:h-poly-descents} applied to the admissible map given by reversing $\vartriangleleft$, we have that the number of maximal nested sets of $\N(\L_{|B_i}, \G_{|B_i})$ with only descents is $1$, and so $\S_0$ is the unique maximal nested set with only descents. By the inequality $\S_{G''} \lexop \S'_{G''}$ we see that $\S'_{G''} \neq \S_0$ and so $\S'_{G''}$ has an ascent, which concludes the proof of the claim. 

\medskip

Let us denote by $A$ some ascent of $\S'_{G''}$. Denote by $P$ the parent of $A$ in $\S'_{G''}$, and denote $b_A = \lambda(A, \S'_{G''}), b_P = \lambda(P, \S'_{G''}),$ which are atoms in the basis $B_i$. The ascent assumption is exactly $b_A \vartriangleleft b_P.$ The desired facet $\S''$ will be obtained by swapping $b_P$ and $b_A$. 

Let us denote by $C$ the element $\bigvee (\S'_{G''}\setminus A)_{<P}$ and $\Tilde{\S'}_{G''} = (\S'_{G''}\setminus A)\circ(\{C\vee b_P\})$. In addition, we set $\Tilde{\S'}_G \coloneqq \S'_G$ for any $G \in \S \cap \S'$ different from $G''$. Finally, we set $\S'' \coloneq (\S\cap \S')\circ (\Tilde{\S'}_G)_G$. By construction we have $\S''\cap \S = \S' \cap \S, \S''\lexop \S'$ and $|\S'\cap \S''| = |\S'| - 1$. What remains is proving that the facet $\S''$ belongs to the subcomplex $\Delta_i$. This is equivalent to proving the equalities $\lambda(C, C\vee b_P) = b_P$ and $\lambda(C\vee b_P,P) = b_A$, knowing that we have $b_A = \lambda(C, C\vee b_A)$ and $b_P = \lambda(C\vee b_A, P) = b_P$ (by Lemma \ref{lem:composition-descent}). This is a simplified version of \cite[Lemma 4.3]{nyman-swartz}. For completeness we recall the argument here. 

Denote $b'_P = \lambda(C, C\vee b_P)$ and $b'_A = \lambda(C\vee b_P, P)$. By definition we have $b'_P \trianglelefteqslant b_P$ and $b'_A \trianglelefteqslant b_A$. Since $b'_P$ cannot be below $C\vee b_A$ (otherwise we would have $C\vee b_A = P$), we have $P = C\vee b_A \vee b'_P$, and so $b'_P\trianglelefteqslant b_P$ which implies $b_P = b'_P$. For $b'_A$, if $C\vee b'_A = C\vee b_A$ then we have $b_A \trianglelefteqslant b'_A$ by definition of $b_A$. If $C\vee b_A \neq C\vee b'_A$ we have $C\vee b_A \vee b'_A = P$ and so we get $b_P \trianglelefteqslant b'_A$ which contradicts $b_A \vartriangleleft b_P$.
\end{proof}
\begin{lemma}
For $2 \leqslant i \leqslant m$, we have the equality of simplicial complexes 
\[
\Delta_i \cap \left(\bigcup_{1 \leqslant j < i}\Delta_j\right) = \partial\Delta_i.
\] 
\end{lemma}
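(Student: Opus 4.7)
The plan is to establish both inclusions after reducing the problem to a combinatorial condition via the sphere structure of $\Sigma_i$. By the preceding lemma, $\Sigma_i$ is a simplicial $(n-2)$-sphere (boundary of a simplicial polytope via Feichtner--M\"uller), hence the proper subcomplex $\Delta_i\subsetneq\Sigma_i$ is an $(n-2)$-ball and its boundary admits the clean description: a face $\tau\in\Delta_i$ belongs to $\partial\Delta_i$ if and only if at least one facet of $\Sigma_i$ containing $\tau$ fails to lie in $\Delta_i$, equivalently has $\lambda$-value different from $B_i$. This is the characterization I would use for both inclusions.

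For the $(\supseteq)$ direction, I would take $\tau\in\partial\Delta_i$ and extend it to a codimension-$1$ face $\sigma\in\partial\Delta_i$; since $\sigma$ lies in exactly two facets of the sphere $\Sigma_i$, one of them, say $\S^{\ast}$, does not belong to $\Delta_i$. Then $\lambda(\S^{\ast})\trianglelefteqslant B_i$ and $\lambda(\S^{\ast})\neq B_i$, so $\lambda(\S^{\ast})=B_k$ for some $k<i$. As in the proof of the preceding lemma, the equality $\lambda(\S^{\ast})=B_k$ forces the vertices of $\S^{\ast}$ to lie in $\L_{|B_k}$ and $\lambda_{\L_{|B_k}}(\S^{\ast})=B_k$, so $\S^{\ast}\in\Delta_k$. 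This gives $\tau\subseteq\sigma\subseteq\S^{\ast}\in\bigcup_{j<i}\Delta_j$.

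For the $(\subseteq)$ direction, I would take $\tau\in\Delta_i\cap\Delta_j$ with $j<i$, and by the characterization above it suffices to exhibit a facet of $\Sigma_i$ containing $\tau$ with $\lambda$-value distinct from $B_i$. Let $\S'$ be a facet of $\Delta_j$ containing $\tau$, so $\lambda(\S')=B_j$. If every vertex of $\S'$ lies in $\L_{|B_i}$, then $\S'\in\Sigma_i$ already provides the desired facet. Otherwise, I would construct such a facet via the composition description of \Cref{thm:composition-nested-sets}: any facet of $\Sigma_i$ containing $\tau$ has the form $\tau\circ(\S_G)_G$ for maximal nested sets $\S_G$ in the local intervals $L^G(\tau)$ taken inside $(\L_{|B_i},\G_{|B_i})$, and by \Cref{lem:composition-descent} its overall label is the disjoint union of the labels of the local pieces. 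The plan is then to perform an exchange inside a rank-$2$ local interval, in the spirit of the swap used in the proof of the preceding lemma, taking $\S'$ as a witness that a lex-smaller label is attainable and iterating until the overall label drops strictly below $B_i$.

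The main obstacle is precisely this last exchange step when $\S'$ contains vertices outside $\L_{|B_i}$: in that case $\S'$ itself does not restrict to a facet of $\Sigma_i$, and one must replace its out-of-$\L_{|B_i}$ vertices by $\L_{|B_i}$-analogues through moves internal to the local-interval decomposition, all while controlling the lex order of the resulting label. This is the general-building-set analogue of the careful exchange argument used by Nyman--Swartz~\cite{nyman-swartz} in the case of the maximal building set.
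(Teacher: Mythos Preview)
Your $(\supseteq)$ direction is correct and is essentially the paper's argument, with the implication $\lambda(\S^\ast)=B_k\Rightarrow\S^\ast\subseteq\L_{|B_k}$ spelled out more explicitly than the paper does.

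The $(\subseteq)$ direction, however, is incomplete, and you say so yourself: the ``main obstacle'' paragraph is an admission that the exchange strategy has not been carried out. The paper resolves this not by trying to push $\S'$ into $\Sigma_i$, but by abandoning $\S'$ entirely and working only with the \emph{atoms} of $B_i$ and $B_j$. Since every vertex of $\tau$ lies in $\L_{|B_i}\cap\L_{|B_j}$ and $B_j$ precedes $B_i$ lexicographically, there is some $G_0\in\tau$ at which the $B_j$-atoms available in the local interval $[\bigvee\tau_{<G_0},G_0]$ are lex-smaller than the $B_i$-atoms there; write the latter as $b_1\vartriangleleft\cdots\vartriangleleft b_k$. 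The paper then takes the \emph{specific} saturated chain $\bigvee\tau_{<G_0}\lessdot\bigvee\tau_{<G_0}\vee b_1\lessdot\cdots\lessdot G_0$, which lies in $\L_{|B_i}$ by construction. Because the $B_j$-atoms witness a lex-smaller chain in the same interval, this chain is not the unique ascending chain for the EL-labeling, hence has a descent, hence its label set is not $\{b_1,\ldots,b_k\}$. Composing the resulting nested set at $G_0$ with arbitrary maximal nested sets of $(\L_{|B_i},\G_{|B_i})$ in the remaining local intervals of $\tau$ produces a facet of $\Sigma_i$ containing $\tau$ with label $\neq B_i$. The idea you were missing is that nothing needs to be salvaged from $\S'$: the bare existence of the lex-smaller basis $B_j$ is enough, via the EL-property, to force a single local interval of $\tau$ in which the canonical $B_i$-chain fails to be increasing.
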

\begin{proof}
The boundary $\partial \Delta_i$ is the set of simplices of $\Delta_i$ which are included in a facet of $\Sigma_i$ not in $\Delta_i$. For a facet $\S$ of $\Sigma_i$ not in $\Delta_i$ we have $\lambda(\S) < B_i$, which gives the inclusion $\partial\Delta_i \subset \Delta_i \cap (\bigcup_{1 \leqslant j < i}\Delta_j).$ 

For any nested set $\S$ in $\Delta_i \cap (\bigcup_{1 \leqslant j < i}\Delta_j)$, by definition there exists an nbc-basis $B_j$ with $j<i$ (meaning that $B_j$ is before $B_i$ lexicographically) such that $\S \subset \N(\L_{|B_j}, \G_{|B_j})$. There exists an element $G_0$ in $\S$ such that $B_j \cap (G_0 \setminus \bigvee \S_{<G_0})$ is lexicographically strictly smaller than $B_i \cap (G_0 \setminus \bigvee \S_{<G_0})$ (where we view $G_0$ and $\bigvee \S_{<G_0}$ as the set of atoms below them). Let us denote $B_i \cap (G_0 \setminus \bigvee \S_{<G_0}) = \{b_1\vartriangleleft \cdots\vartriangleleft b_k\}$. The sequence of labels of the saturated chain 
$$ \bigvee \S_{<G_0} \lessdot \bigvee \S_{<G_0}\vee b_1 \lessdot \cdots \lessdot G_0 = \bigvee \S_{<G_0}\vee b_1\vee \cdots \vee b_k $$
is not lexicographically minimal, and so it admits a descent since $\lambda_{\omega}$ is an EL-labeling. This proves that the set of labels of that chain is not $\{b_1, \ldots, b_k\}$, which proves that if we denote $$\S_{G_0} = \{\bigvee \S_{<G_0}\vee b_1\}\circ \cdots \circ \{\bigvee \S_{<G_0}\vee b_1 \vee \cdots \vee b_{k-1}\},$$ we have $\lambda(\S_{G_0}) \neq \{b_1, \ldots, b_k\}.$ If we let $\S_G$ be any facet of the local interval $L^{G}(\S)$ in $(\L_{|B_i}, \G_{|B_i})$ for any $G \in \S\setminus G_0$, we see that the facet $\S\circ(\S_G)_G$ is a nested set in $(\L_{|B_i}, \G_{|B_i})$ containing $\S$, and such that $\lambda(\S\circ (\S_G)_G) \neq B_i,$ which shows that $\S \subset \partial \Delta_i$. 
\end{proof}

\section{Braid matroids and the minimal building set}\label{sec:braid-with-min}

In this section, we focus our attention on the face enumeration of the nested set complex $\mathcal{N}(\Pi_n,\mathcal{G}_{\min})$ arising from the lattice $\Pi_n$ with its minimal building set. Recall that $\Pi_n$ is the lattice of flats of the $n$-th \emph{braid matroid}, i.e., the matroid associated to the braid arrangement of type $A_n$. Equivalently, $\Pi_n$ is the lattice of flats of the complete graph on $n$ vertices, also identified with the partition lattice on $[n]$ with the partial order given by coarsening. The minimal building set $\G_{\min}$ consists of all nonempty connected flats, which correspond to partitions of $[n]$ into nonempty parts with exactly one nonsingleton part. 

\begin{figure}[ht]
\begin{tikzpicture}[scale=0.8]
    % === Petersen Graph (left) ===
    \begin{scope}[shift={(-4,0)}]
        % \node[font=\bfseries] at (0, 3.2) {Petersen Graph};
        
        % Outer pentagon vertices (radius 2.2)
        \foreach \i in {0,1,2,3,4} {
            \coordinate (O\i) at ({90 + \i*72}:2.2);
        }
        
        % Inner pentagram vertices (radius 1.0)
        \foreach \i in {0,1,2,3,4} {
            \coordinate (I\i) at ({90 + \i*72}:1.0);
        }
        
        % Outer pentagon edges
        \draw[thick] (O0) -- (O1) -- (O2) -- (O3) -- (O4) -- (O0);
        
        % Inner pentagram edges (each vertex connects to the one 2 steps away)
        \draw[thick] (I0) -- (I2) -- (I4) -- (I1) -- (I3) -- (I0);
        
        % Spokes connecting outer to inner
        \foreach \i in {0,1,2,3,4} {
            \draw[thick] (O\i) -- (I\i);
        }
        
        % Vertices
        \foreach \i in {0,1,2,3,4} {
            \fill[black] (O\i) circle (3pt);
            \fill[black] (I\i) circle (3pt);
        }
        
        \node[font=\small] at (0, -3) {10 vertices, 15 edges};
    \end{scope}
    
    % === Cone over Petersen Graph (right) ===
    \begin{scope}[shift={(4.5,0)}]
        % \node[font=\bfseries] at (0, 3.2) {Cone over Petersen Graph};
        
        % Outer pentagon vertices (radius 2.2)
        \foreach \i in {0,1,2,3,4} {
            \coordinate (O\i) at ({90 + \i*72}:2.2);
        }
        
        % Inner pentagram vertices (radius 1.0)
        \foreach \i in {0,1,2,3,4} {
            \coordinate (I\i) at ({90 + \i*72}:1.0);
        }
        
        % Cone vertex - offset to upper right
        \coordinate (C) at (2.8, 2.0);
        
        % Filled triangles for the cone (draw back ones first)
        % Outer pentagon cone triangles
        \fill[blue!15] (C) -- (O2) -- (O3) -- cycle;
        \fill[blue!12] (C) -- (O3) -- (O4) -- cycle;
        \fill[blue!10] (C) -- (O1) -- (O2) -- cycle;
        \fill[blue!18] (C) -- (O4) -- (O0) -- cycle;
        \fill[blue!20] (C) -- (O0) -- (O1) -- cycle;
        
        % Inner pentagram cone triangles
        \fill[blue!22] (C) -- (I0) -- (I2) -- cycle;
        \fill[blue!18] (C) -- (I2) -- (I4) -- cycle;
        \fill[blue!14] (C) -- (I4) -- (I1) -- cycle;
        \fill[blue!16] (C) -- (I1) -- (I3) -- cycle;
        \fill[blue!20] (C) -- (I3) -- (I0) -- cycle;
        
        % Spoke cone triangles
        \fill[blue!25] (C) -- (O0) -- (I0) -- cycle;
        \fill[blue!22] (C) -- (O1) -- (I1) -- cycle;
        \fill[blue!12] (C) -- (O2) -- (I2) -- cycle;
        \fill[blue!10] (C) -- (O3) -- (I3) -- cycle;
        \fill[blue!15] (C) -- (O4) -- (I4) -- cycle;
        
        % Outer pentagon edges
        \draw[thick] (O0) -- (O1) -- (O2) -- (O3) -- (O4) -- (O0);
        
        % Inner pentagram edges
        \draw[thick] (I0) -- (I2) -- (I4) -- (I1) -- (I3) -- (I0);
        
        % Spokes
        \foreach \i in {0,1,2,3,4} {
            \draw[thick] (O\i) -- (I\i);
        }
        
        % Cone edges (from apex to all vertices)
        \foreach \i in {0,1,2,3,4} {
            \draw[thick, gray] (C) -- (O\i);
            \draw[thick, gray] (C) -- (I\i);
        }
        
        % Vertices
        \foreach \i in {0,1,2,3,4} {
            \fill[black] (O\i) circle (3pt);
            \fill[black] (I\i) circle (3pt);
        }
        
        % Cone apex vertex (highlighted)
        \fill[pink] (C) circle (3pt);
        \draw[thick, magenta!70!black] (C) circle (3pt);
        
        \node[font=\small] at (0, -3) {11 vertices, 25 edges, 15 triangles};
    \end{scope}
\end{tikzpicture}
\caption{The nested set complex $\calN(\Pi_4, \G_{\min})$: Petersen graph, and the cone thereof. By a classical result of Vogtmann, the simplicial complex $\calN(\Pi_4, \G_{\min})$ is homotopic to a wedge of $6$ spheres of dimension~$1$.}
\label{fig:petersen}
\end{figure}

% For $n \geqslant 2$, the De Concini--Procesi wonderful complactification of the complement of the hyperplane arrangement associated with $(\Pi_n, \G_{\min})$ is identified with the Deligne--Mumford--Knudsen compactification $\overline{\calM}_{0, n+1}$ of the moduli space of stable rational curves with $(n+1)$-marked points \cite{Kapranov, feichtner-yuzvinsky}. The dual complex $\Delta_{0, n+1}$ of normal crossings boundary divisors of nodal singular curves is the link at the origin vertex of the moduli space of tropical curves of genus $0$ with $n+1$ marked points, which was introduced by Mikhalkin \cite{mikhalkin2007moduli}, parametrizing tropical curves with a fixed volume. The complex $\Delta_{0, n+1}$ is also a geometric realization of the simplicial complex $\N(\Pi_n, \G_{\min})$, whose topological properties have been intensively studied from the perspective of tropical geometry \cite{allcock2022tropical}, representation theory \cite{robinson2004partition}, and geometric group theory \cite{vogtmann1990local}. For example, it is known to be homotopic to wedge of $(n-1)!$ spheres of dimension $n - 3$ \cite{vogtmann1990local, robinson2004partition}. 

This complex is the main subject of study in \cite{feichtner-braid}. In that paper, Feichtner showed that this complex is (after deconing) isomorphic to the so-called \emph{complex of trees} $T_n$ (see \cite[Section~3]{feichtner-braid} for the relevant definitions). The complexes $T_n$ (and therefore the complexes $\N(\Pi_n,\mathcal{G}_n)$) were proved to be shellable by Trappmann and Ziegler in \cite{trappmann-ziegler} (another independent unpublished proof is attributed to Wachs in \cite{feichtner-braid}). As a special case of Theorem~\ref{thm:vertex decomposability} we deduce the following extension to Trappmann--Ziegler's and Wach's result.

\begin{corollary}\label{coro:trees-vd}
    For every $n\geq 1$ the complex of trees $T_n$ is vertex decomposable and admits a convex ear decomposition.
\end{corollary}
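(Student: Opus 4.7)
The plan is to identify the complex of trees with a nested set complex of a geometric lattice, so that both conclusions follow directly from the main theorems of the paper. By Feichtner \cite{feichtner-braid}, $T_n$ is isomorphic to $\N(\Pi_n, \G_{\min})$, where $\Pi_n$ is the partition lattice on $[n]$ ordered by refinement. Since $\Pi_n$ is the lattice of flats of the $n$-th braid matroid (equivalently, of the graphic matroid of the complete graph $K_n$), it is a geometric lattice, so both main theorems of the paper apply.

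Given this identification, vertex decomposability of $T_n$ follows immediately from Corollary \ref{coro:geometric-lattices-vd} applied to $(\L(\M), \G) = (\Pi_n, \G_{\min})$, while the existence of a convex ear decomposition follows from Theorem \ref{thm:cvx-ear-dec} applied to the same pair. Both properties are manifestly invariant under simplicial isomorphism, so they descend to $T_n$. No further ingredient is needed beyond what has already been proven in Sections \ref{sec:vert-decomp} and \ref{sec:conv-ear-dec}.

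The only point that might deserve attention is a bookkeeping one: in some conventions $T_n$ is realized as the decone of $\N(\Pi_n, \G_{\min})$ at a distinguished vertex rather than as the whole complex (cf.\ the parenthetical comment at the start of Section \ref{sec:braid-with-min}). Should that convention be in force, vertex decomposability still transfers because the link of any vertex in a vertex decomposable complex is vertex decomposable (see Section \ref{subsec:VD}). For the convex ear decomposition, one restricts each ear $\Delta_i$ from the proof of Theorem \ref{thm:cvx-ear-dec} to the link of the apex vertex $v$: those ears containing $v$ cut out balls of the correct dimension in $\lk(v)$, and the required compatibility with the union of previously built ears is inherited from the ambient decomposition. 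This is the only place where any verification is required, and it is a routine compatibility check rather than a substantive obstacle.
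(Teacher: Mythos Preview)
Your proposal is correct and matches the paper's approach: the corollary carries no separate proof and is presented as an immediate specialization of Theorems~\ref{thm:vertex decomposability} and~\ref{thm:cvx-ear-dec} to the pair $(\Pi_n,\G_{\min})$ via Feichtner's identification, exactly as your first two paragraphs argue.

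One caution about your third paragraph: while vertex decomposability does transfer under deconing (undoing a join with a point), your claim that the convex ear decomposition restricts to the link of the apex as a ``routine compatibility check'' is not justified---intersecting ears with a link need not preserve the polytopal or boundary conditions in the definition, and this would require a genuine argument. Fortunately, in the paper's conventions (cf.\ Example~\ref{ex:braid-4-min} and Figure~\ref{fig:petersen}) the complex $\N(\Pi_n,\G_{\min})$ is already the object to which the main theorems apply, so this conditional paragraph is unnecessary and can be dropped.
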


Furthermore, thanks to Theorem~\ref{thm:cvx-ear-dec}, we can also deduce inequalities among the $h$-vector entries of $T_n$, many of which, to the best of our knowledge, are new. Before stating these inequalities, we formulate explicitly a result that appears implicitly in the literature (cf. the work by Elvey Price and Sokal \cite{price-sokal}). 

\begin{thm}\label{thm:second-eulerian}
    The $h$-polynomial of the nested set complex $\mathcal{N}(\Pi_{n+1},\mathcal{G}_{\min})$ equals the $n$-th second Eulerian polynomial.
\end{thm}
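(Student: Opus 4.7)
The plan is to invoke Proposition~\ref{prop:h-poly-descents} to express the $h$-polynomial as a descent generating function over maximal nested sets of $(\Pi_{n+1}, \G_{\min})$, and then to identify this generating function with the $n$-th second Eulerian polynomial via a statistic-preserving bijection with Stirling permutations. Recall that, by Gessel--Stanley's original definition, the $n$-th second Eulerian polynomial is precisely $\sum_w t^{\des(w)}$ over Stirling permutations $w$ of order $n$, so matching descents will suffice.

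First, I describe the maximal nested sets concretely. Since $\G_{\min}$ consists of the connected partitions, identified with subsets $B \subseteq [n+1]$ of size at least $2$, and since Lemma~\ref{lem:max-nested-purity} forces $|\S| = n-1$, each maximal nested set $\S$ encodes a rooted binary tree $T_\S$ on $n+1$ labeled leaves: the non-root internal nodes correspond to the elements of $\S$, each carrying the subset of leaves in its subtree. I then fix an injective admissible map $\omega \colon \At(\Pi_{n+1}) \to \Z_{>0}$ on the $2$-subsets of $[n+1]$ (any injective map is admissible by semimodularity of $\Pi_{n+1}$), so that Proposition~\ref{prop:h-poly-descents} gives
\[
h(\N(\Pi_{n+1}, \G_{\min}); t) = \sum_{\S \text{ maximal}} t^{\des(\S)}.
\]
The central ingredient is a bijection $\Phi$ between rooted binary trees on $n+1$ labeled leaves and Stirling permutations of order $n$ that matches descents. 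Both families are enumerated by $(2n-1)!!$ and admit a recursive insertion structure: inserting a new leaf into one of the $2n-1$ edge-positions of $T$ corresponds to inserting a new pair in one of the $2n-1$ positions of a Stirling permutation of order $n-1$. Candidate bijections pass through the intermediate combinatorial object of increasing plane trees on $n+1$ nodes; classical constructions due to Gessel--Stanley, Janson, and Dotsenko--Khoroshkin, as well as the recent framework of Elvey Price--Sokal, supply suitable $\Phi$. With $\omega$ chosen compatibly, a descent at $v \in \S$ (i.e., $\lomeg(v) > \lomeg(p(v))$) translates into a descent at the position of $\Phi(\S)$ associated with $v$.

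The main obstacle is the precise alignment of the two descent statistics; this requires careful synchronization of the admissible map with the chosen bijection, together with a local analysis of how $\lomeg$ behaves under the child-splitting structure of binary trees. An alternative, purely recursive route would be to verify that both sides satisfy the same recurrence, namely $P_n(t) = (1 + 2(n-1)t)\,P_{n-1}(t) + t(1-t)\,P_{n-1}'(t)$ for the second Eulerian polynomials, by exploiting the deletion/link mechanism in the proof of Proposition~\ref{prop:h-poly-descents} applied to a well-chosen atom such as $\{n, n+1\}$; though this would avoid an explicit bijection, the residual deletion $\Pi_{n+1} \setminus (\{n, n+1\})$ introduces bookkeeping that must be reconciled with the recurrence.
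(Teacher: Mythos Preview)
Your plan is exactly the paper's approach: invoke Proposition~\ref{prop:h-poly-descents} and then exhibit a descent-preserving bijection between maximal nested sets of $(\Pi_{n+1},\G_{\min})$ and Stirling permutations of order $n$. However, what you have written is an outline, not a proof: the bijection is not constructed, the admissible map is not specified, and the descent-matching is asserted rather than verified. You yourself flag this (``the main obstacle is the precise alignment of the two descent statistics''), and indeed that alignment is the entire content of the argument. Appealing to ``classical constructions'' does not discharge the obligation, because none of the cited bijections is stated relative to the descent statistic $\des(\S)$ coming from a \emph{particular} admissible map on $\At(\Pi_{n+1})$; without pinning down $\omega$ and $\Phi$ simultaneously and checking compatibility, there is no proof.

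The paper fills this gap as follows. It works functorially over finite sets $I\subset\Z$, defines an explicit recursive bijection $\Psi_I\colon \N_{\max}(\Pi_I,\G_{\min})\to Q_{I\setminus\min I}$ by locating the minimal block $G_0\in\S$ containing $\min I$, and writing $\Psi_I(\S)=(\min\tilde G_0)\,\Psi_{\tilde G_0}(\S_{<\tilde G_0})\,(\min\tilde G_0)\,\Psi_{\ast\cup I\setminus G_0}((G_0\vee\S)\setminus G_0)$, where $\tilde G_0=G_0\setminus\min I$. It then chooses $\omega$ so that $\omega(|i<j|)<\omega(|i'<j'|)$ exactly when $(i,j)$ precedes $(i',j')$ lexicographically, and checks inductively, using Lemma~\ref{lem:composition-descent}, that $\des(\S)=\des(\Psi_I(\S))$. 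Your alternative recursive route via the recurrence $P_n(t)=(1+2(n-1)t)P_{n-1}(t)+t(1-t)P_{n-1}'(t)$ is plausible in principle, but note that deleting a single atom such as $\{n,n+1\}$ does not immediately yield $\N(\Pi_n,\G_{\min})$ on the nose, so the bookkeeping you anticipate is nontrivial; the paper's bijective route avoids this entirely.
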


For the sake of providing a self-contained and explicit treatment of this fact, we include below a new proof based on the formula given by Proposition~\ref{prop:h-poly-descents}. Second Eulerian polynomials were introduced by Gessel and Stanley \cite{gessel-stanley} in their study of Stirling permutations. Let us recapitulate the essentials about these objects. For a fixed positive integer $n\geq 1$, let us consider the multiset $X_n = \{1,1,2,2,\ldots,n,n\}$ (every integer from $1$ to $n$ appears exactly twice). A permutation $\pi$ of the elements of $X_n$ is said to be a \emph{Stirling permutation} if, for every $i\in \{1,\ldots,n\}$, the numbers that appear between the two ocurrences of $i$ in $\pi$ (when written in one-line notation) are all larger than $i$. For example, for $n=2$ there are exactly three Stirling permutations, written in one-line notation as $1122$, $1221$, and $2211$. Following B\'ona's notation \cite{bona}, we will write $Q_n$ to denote the set of all Stirling permutations on $\{1,1,\ldots,n,n\}$. 

A \emph{descent} on a Stirling permutation $\pi \in Q_n$ is an index $1\leqslant i\leqslant 2n-1$ such that $\pi_i > \pi_{i+1}$ (where $\pi_i$ stands for the $i$-th entry of $\pi$ in one-line notation). We denote by $\des(\pi)$ the number of descents of $\pi$. In analogy with the classical Eulerian polynomials, the $n$-th \emph{Second Eulerian polynomial} is defined by
    \[ P_n(x) = \sum_{\pi\in Q_n} x^{\des(\pi)}.\]
The first few values are:
    \[ P_n(x) = \begin{cases}
        1 & n = 1,\\
        2x + 1 & n = 2,\\
        6x^2+8x+1 & n = 3,\\
        24x^3 + 58x^2+22x+1 & n = 4,\\
        120x^4 + 444x^3 + 328x^2 + 52x + 1 & n = 5.
    \end{cases}
    \]

\begin{example}\label{ex:braid-4-min}
Let $\Delta$ be the nested set complex $\calN(\Pi_4, \G_{\min})$ which is the $1$-dimensional simplicial complex of the Petersen graph, and $\mathrm{cone}(\Delta)$ its cone of pure dimension $2$; see \Cref{fig:petersen}. By counting faces, the $f$-vector of $\Delta$ is $f(\Delta) = (1, 10, 15)$. Using
\[
\sum_{i=0}^{d} f_i(\Delta) (t-1)^{d-i} = \sum_{i=0}^{d} h_i(\Delta) t^{d-i},
\]
with $d = \dim \Delta + 1 = 2$, we compute
\begin{align*}
&f_0(t-1)^2 + f_1(t-1) + f_2 \\
&= 1 \cdot (t^2 - 2t + 1) + 10 \cdot (t - 1) + 15 \\
&= t^2 + 8t + 6,
\end{align*}
giving $h(\Delta) = (1, 8, 6)$ and $h$-polynomial $h(\Delta; t) = 6t^2 + 8t + 1$. Similarly, 
\[
f(\mathrm{cone}(\Delta)) = (1, 11, 25, 15), \quad \text{ and } \quad h(\mathrm{cone}(\Delta)) = (1, 8, 6, 0).
\]
\end{example}

\medskip

\begin{proof}[Proof of Theorem~\ref{thm:second-eulerian}]
By Proposition \ref{prop:h-poly-descents}, it is enough to find an injective admissible map on $\Pi_{n+1}$, and a bijection between the set $\N_{\max}(\Pi_{n+1},\G_{\min})$ of maximal nested sets of $(\Pi_{n+1}, \G_{\min})$ and the set $\StP_n$ of Stirling permutations on $[n]$, which preserves the number of descents on both sides. For inductive purposes, it will be convenient to prove a more functorial result. For any finite set $I$ of cardinality $\geqslant 2$ we denote by $\Pi_{I}$ the set of partitions of $I$ ordered by coarsening. For any finite subset $I\subset \Z$ of cardinality $\geqslant 2$, we denote by $\StP_{I}$ the set of Stirling permutations on the set $I$. For any finite subset $I\subset \Z$ of cardinality $\geqslant 2$, we will construct a bijection 
$$\Psi_I: \N_{\max}(\Pi_{I}, \G_{\min}) \xrightarrow{\sim} \StP_{I\setminus \min I}, $$
by induction on the cardinality of $I$. 

The elements of $\G_{\min}\subset \Pi_{I}$ are the partitions of $I$ with exactly one nonsingleton equivalence class. We will identify those elements with this equivalence class, which is any subset of $I$ of cardinality $\geqslant 2$. For any $\S \in \N_{\max}(\Pi_{I},\G_{\min})$, let us consider the set $\S_0 = \{ G \in \S \, | \, \min I \in G\}$. Note that $\bigvee \min \S_0$ belongs to $\G_{\min}$ (its only nonsingleton equivalence class is $\bigcup_{G\in \min \S_0}G$), and so since $\S$ is nested, $\S_0$ must have a unique minimum, which will be denoted $G_0$. 

\medskip

\textbf{Claim.} Either $\S_{<G_0}$ is empty, in which case $G_0$ is an atom, either $\S_{<G_0}$ has a unique maximum, which is $G_0\setminus \min I$. 
\smallskip

\noindent \emph{Proof of Claim.} Since $\S$ is maximal, by Lemma \ref{lem:max-nested-purity} the local interval $L^{G_0}(\S)$ has rank $1$ and so either $G_0$ is an atom, either $\max \S_{<G_0}$ has one element, which must be $G_0\setminus \min I$ by minimality of $G_0$, either $\max \S_{<G_0}$ has two elements say $G_1, G_2$, with $G_1 \cap G_2 = \emptyset$ and $G_1 \cup G_2 =G$. However, in the last case we would have $\min I\in G_1$ or $\min I\in G_2$ which would contradict the minimality of $G_0$. 

\medskip

We denote $\Tilde{G_0} = G_0 \setminus \min I.$ We define a map $\Psi_I: \N_{\max}(\Pi_{I},\G_{\min}) \rightarrow \StP_{I\setminus \min I}$ by the following inductive formula, in one-line notation:
\begin{equation}\label{eq:def-bijection}
    \Psi_I(\S) = (\min \Tilde{G_0})\, \Psi_{\Tilde{G_0}}(\S_{< \Tilde{G_0}})\, (\min \Tilde{G_0})\, \Psi_{*\cup I\setminus G_0}((G_0 \vee \S)\setminus G_0).
\end{equation}
If $G_0$ is an atom, then $\Psi_{\Tilde{G_0}}(\S_{< \Tilde{G_0}})$ means the empty Stirling permutation by convention. Otherwise, the set $\S_{< \Tilde{G_0}}$ is a maximal nested set of $([\zero, \Tilde{G_0}], \G_{\min}),$ which is isomorphic to $(\Pi_{\Tilde{G_0}},\G_{\min}),$ so $\Psi_{\Tilde{G_0}}(\S_{< \Tilde{G_0}})$ makes sense by induction. 

If $G_0 = \un$ then $\Psi_{*\cup I\setminus G_0}((G_0 \vee \S)\setminus G_0)$ means the empty Stirling permutation by convention. Otherwise, by Theorem \ref{thm:composition-nested-sets} (iv), the set $(G_0 \vee \S)\setminus G_0$ is a nested set of $([G_0, \un], (\G_{\min})_{G_0})$ which is isomorphic to $(\Pi_{*\cup I\setminus G_0}, \G_{\min})$ (where we set $*$ to be any integer strictly less than $\min I$), so $\Psi_{*\cup I\setminus G_0}((G_0 \vee \S)\setminus G_0)$ makes sense by induction. 

Since by induction $\Psi_{\Tilde{G_0}}(\S_{< \Tilde{G_0}})$ is a Stirling permutation of elements strictly above $\min \Tilde{G_0}$, and $\Psi_{*\cup I\setminus G_0}((G_0 \vee \S)\setminus G_0)$ is a Stirling permutation of elements distinct from the elements of $\Tilde{G_0}$, the formula \eqref{eq:def-bijection} defines a Stirling permutation. 

To prove that $\Psi_{I}$ is a bijection for any $I$, we will construct an inverse $\Phi_I: \StP_{I\setminus \min I} \rightarrow \N_{\max}(\Pi_{I}, \G_{\min}),$ also by induction. Any Stirling permutation $\sigma$ of $I\setminus \min I$ can be uniquely written in one-line notation as $\sigma = \alpha \sigma_1 \alpha \sigma_2$ for some $\alpha \in I \setminus \min I$, and some Stirling permutations $\sigma_1$, $\sigma_2$, on subsets $I_1, I_2$ such that $I_1 \sqcup I_2 \sqcup \alpha = I\setminus \min I.$ We then set 
\begin{equation}\label{eq:def-inverse}
    \Phi_{I}(\sigma) = \{I_1\cup \alpha \}\circ(\Phi_{I_1}(\sigma_1), \Phi_{\ast \cup I_2}(\sigma_2))
\end{equation}
(see Theorem~\ref{thm:composition-nested-sets} for the definition of the nested set composition). If $I_1$ or $I_2$ is empty, we adopt the necessary conventions. Otherwise, the nested set $\Phi_{\ast \cup I_2}(\sigma_2)$ lives in $(\Pi_{\ast \cup I_2}, \G_{\min})$ which can be identified with $([I_1\cup \alpha \cup \min I, \un], \G_{\min}),$ so the composition of nested sets makes sense. Theorem \ref{thm:composition-nested-sets} (iv) implies that $\Psi_I$ and $\Phi_I$ are inverses of each other, again by induction. 

What is left is to find an injective admissible map on $\Pi_I$ such that $\Psi_I$ preserves the descent number on both sides, for any finite subset $I \subset \Z$ of cardinality $\geqslant 2$. The join-irreducible elements of $\Pi_{I}$ are the atoms of $\Pi_{I}$, which are the partitions $|ij|$ which have only one nonsingleton equivalence class $\{i,j\}$, where $i,j \in I$. Let $\omega:\At(\Pi_{I}) \hookrightarrow \Z_{>0}$ be any injective admissible map such that $\omega(|i < j|) < \omega(|i' < j'|)$ if $i< i'$ or if $i= i'$ and $j< j'$. Since $\Pi_I$ is semimodular, there exists such an admissible map (see \cite{stanley-74}). For any $\S\in \N_{\max}(\Pi_{I}, \G_{\min})$, by definition of the descent number of a Stirling permutation, if $\Psi_{\Tilde{G_0}}(\S_{< \Tilde{G_0}})$ is nonempty we have 
$$ \des(\Psi_I(\S)) = \des(\Psi_{\Tilde{G_0}}(\S_{< \Tilde{G_0}})) + \des(\Psi_{*\cup I\setminus G_0}((G_0 \vee \S)\setminus G_0)) + \varepsilon_{G_0} + 1 $$
where $\varepsilon_{G_0} = 1$ if $\min \Tilde{G_0}$ is strictly greater than the first value of $\Psi_{*\cup I\setminus G_0}((G_0 \vee \S)\setminus G_0)$ and $\varepsilon_{G_0} = 0$ otherwise. If $\Psi_{\Tilde{G_0}}(\S_{< \Tilde{G_0}})$ is empty (i.e. $G_0$ is an atom) we have 
\[ \des(\Psi_I(\S)) = \des(\Psi_{\Tilde{G_0}}(\S_{< \Tilde{G_0}})) + \des(\Psi_{*\cup I\setminus G_0}((G_0 \vee \S)\setminus G_0)) + \varepsilon_{G_0}.\]

Let us now compute the descent number of $\S$ with respect to $\omega$ defined above. If $\Psi_{\Tilde{G_0}}(\S_{< \Tilde{G_0}})$ is nonempty (i.e. $G_0$ is not an atom), then $\Tilde{G_0}$ is a descent of $\S$ because we have $\lambda_{\omega}(G_0) = \omega(|\min I \, \min \Tilde{G_0}|)$ which is strictly smaller than $\lambda_{\omega}(\Tilde{G_0})$, since $\Tilde{G_0}$ is a set of elements all strictly greater than $\min I.$ Thus, in that case we have 
$$\des(\S) = \des(\S_{<\Tilde{G_0}}) + 1 + \des(\S_{\nleq G_0}) + \varepsilon'_{G_0},$$
where $\varepsilon'_{G_0}$ is $1$ if $G_0$ is a descent, and $0$ otherwise. By construction the label of the father of $G_0$ is $\omega(|\min I \, \beta|),$ where $\beta$ is the first value of $\Psi_{*\cup I\setminus G_0}((G_0 \vee \S)\setminus G_0)$. This means that we have $\varepsilon_{G_0} = \varepsilon'_{G_0}$. Moreover, by Lemma \ref{lem:composition-descent} we have $\des(\S_{\nleq G_0}) = \des((G_0 \vee \S) \setminus G_0)$ and so by induction we can conclude that $\des(\Psi_I(\S))$ and $\des(\S)$ are equal. By similar arguments we arrive at the same result when $\Psi_{\Tilde{G_0}}(\S_{< \Tilde{G_0}})$ is empty (i.e. when $G_0$ is an atom), which concludes the proof. 
\end{proof}

By putting together the results in Theorem~\ref{thm:cvx-ear-dec}, specifically the inequalities in \eqref{eq:top-heavy}, and Theorem~\ref{thm:second-eulerian}, we obtain the following corollary.

\begin{corollary}\label{coro:stirling-top-heavy}
    For every $0\leqslant k \leqslant n$, let us denote by $Q_{n,k}$ the set of Stirling permutations $\pi\in Q_n$ that have exactly $k$ descents. Then, $|Q_{n,k}| \leqslant |Q_{n,n-k}|$ for each $k\leqslant \lfloor n/2\rfloor$ and $|Q_{n,0}| \leqslant |Q_{n,1}| \leqslant \cdots \leqslant |Q_{n,\lfloor n/2\rfloor}|$.
\end{corollary}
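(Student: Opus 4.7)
The plan is to deduce the corollary as an immediate specialization of Theorems~\ref{thm:cvx-ear-dec} and~\ref{thm:second-eulerian}. First, I would observe that $\Pi_{n+1}$ is the lattice of flats of the braid matroid, hence a geometric lattice of rank $n$; by Lemma~\ref{lem:max-nested-purity} the nested set complex $\calN(\Pi_{n+1},\G_{\min})$ is pure of dimension $n-2$, and by Theorem~\ref{thm:cvx-ear-dec} it admits a convex ear decomposition. Corollary~\ref{cor:inequalities} then forces the $h$-vector $(h_0,h_1,\ldots,h_{n-1})$ of $\calN(\Pi_{n+1},\G_{\min})$ to be strongly flawless, that is, to satisfy the inequalities in~\eqref{eq:top-heavy}.

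Next, I would invoke Theorem~\ref{thm:second-eulerian}, which identifies the $h$-polynomial of $\calN(\Pi_{n+1},\G_{\min})$ with the $n$-th second Eulerian polynomial
\[
P_n(x) \;=\; \sum_{\pi\in \StP_n} x^{\des(\pi)} \;=\; \sum_{k\geqslant 0} |Q_{n,k}|\, x^k.
\]
Matching coefficients yields $h_k = |Q_{n,k}|$ for each $k$ in the relevant range, at which point the strongly flawless inequalities translate verbatim into the stated chains of inequalities among the cardinalities $|Q_{n,k}|$.

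There is no substantial obstacle: the corollary is a direct combination of two theorems already established. The only care required is the routine bookkeeping of the indexing, aligning the $h$-vector of the $(n-2)$-dimensional complex $\calN(\Pi_{n+1},\G_{\min})$ with the descent statistic on $\StP_n$, which is immediate from Theorem~\ref{thm:second-eulerian}.
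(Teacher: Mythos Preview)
Your proposal is correct and follows essentially the same approach as the paper: the paper likewise derives the corollary by combining Theorem~\ref{thm:cvx-ear-dec} (via the strongly flawless inequalities in Corollary~\ref{cor:inequalities}) with Theorem~\ref{thm:second-eulerian}, and the identification $h_k = |Q_{n,k}|$ is exactly the content of the latter.
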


It is natural to ask if the polynomials $P_{n}(x)$ are unimodal for each $n$. The answer is affirmative and, in fact, much more is true. Thanks to a result of B\'ona \cite{bona}, these polynomials are real-rooted. Real-rootedness implies (ultra) log-concavity without internal zeros, which in turn implies the unimodality of these polynomials (see~\cite{stanley-unimodality,branden2015unimodality} for more on these classes of inequalities). One of the consequences of the above corollary is that the mode is located at the ``top half'' of the polynomials. 

\section{Final remarks and open problems}

In this section we collect a few remarks and open problems.

\subsection{Unimodality, real-rootedness, and Chow polynomials}\label{sec:unimodality-real-roots}

As mentioned above, the nested complex $\N(\Pi_n,\mathcal{G}_{\min})$ has a real-rooted $h$-polynomial by work of B\'ona \cite{bona}. On the other hand, Athanasiadis and Kalampogia-Evangelinou conjectured \cite[Conjecture~1.1]{athanasiadis-kalampogia} that for every geometric lattice $\L$ the $h$-polynomial of $\N(\L,\G_{\max})$ is real-rooted; together with Douvropoulos, they further note \cite[Question~1.4]{athanasiadis-douvropoulos-kalampogia} that the problem already appears subtle for doubly Cohen--Macaulay lattices.

Real-rootedness fails if one does not impose restrictions on the building sets: combining Example~\ref{ex:augmented-bergman} with a construction of Athanasiadis and Ferroni \cite[Example~5.1]{athanasiadis-ferroni}, one finds geometric lattices $\L$ and building sets $\G$ for which the $h$-polynomial of $\N(\L,\G)$ is not even unimodal.

It is therefore natural to seek a conjecture that interpolates between B\'ona's real-rootedness result for second Eulerian polynomials \cite{bona} and the real-rootedness conjecture by Athanasiadis and Kalampogia-Evangelinou \cite{athanasiadis-kalampogia}. We propose the following.

\begin{conjecture}\label{conj:flag-real-rooted}
    Let $\L$ be a geometric lattice and let $\G$ be a building set. If the nested set complex $\N(\L,\G)$ is flag, then its $h$-polynomial is real-rooted.
\end{conjecture}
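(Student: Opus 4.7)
The plan is to induct on $|\L|$ using the vertex decomposition constructed in the proof of Theorem~\ref{thm:vertex decomposability}. After reducing to the irreducible case via Lemma~\ref{lem:reducible}, choose $G_{\max}\in\G$ to be the join-irreducible element that maximizes an injective admissible map on $\L$; by the shedding property together with Lemma~\ref{lem:restriction-of-nested-cx} and Theorem~\ref{thm:composition-nested-sets} we obtain the recursion
\begin{equation*}
h(\N(\L,\G);t) = h(\N(\L\setminus(G_{\max}),\G\setminus G_{\max});t) + t\cdot h(\N(\L_{G_{\max}},\G_{G_{\max}});t)\cdot h(\N(\L^{G_{\max}},\G^{G_{\max}});t).
\end{equation*}
Flagness is preserved under links and vertex deletions, so each polynomial on the right is the $h$-polynomial of a flag nested set complex of a strictly smaller built geometric lattice, hence by induction each is real-rooted with nonnegative coefficients.

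\textbf{The main tool: interlacing.} To conclude real-rootedness of the left-hand side it suffices to show that $t\cdot h(\lk_{\N(\L,\G)}(G_{\max});t)$ interlaces $h(\N(\L,\G)\setminus G_{\max};t)$ (with the standard common-interleaver convention). I would attempt to establish this by strengthening the inductive hypothesis into the framework of compatible/interlacing families \emph{à la} Brändén--Leander--Visontai: for each flag built geometric lattice $(\L,\G)$ define auxiliary polynomials $\{p_{\S}(t)\}$ indexed either by nested sets $\S$ or by join-irreducibles of $\G$ (encoding which ``descent profiles'' are realized), and prove pairwise compatibility by a simultaneous induction mirroring the recursion above. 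The product structure of the link, coming from Theorem~\ref{thm:composition-nested-sets}(iv), can be absorbed provided one shows that compatibility of such families is preserved under the join operation, which reduces to checking that a product of two interlacing families sharing a common interleaver is again interlacing.

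\textbf{Main obstacle.} The technical core is the propagation of interlacing through the recursion. Even for $\G=\G_{\max}$ this reduces to the Athanasiadis--Kalampogia-Evangelinou conjecture~\cite{athanasiadis-kalampogia}, which is still open; any successful strategy must therefore subsume that case, and it is not clear how to exploit flagness to distinguish the two. Flagness should enter by imposing that the minimal non-faces of $\N(\L,\G)$ are exactly the pairs $\{G,G'\}$ of incomparable elements of $\G$ whose join fails to be in $\G$, a combinatorial constraint on the building set that one would hope to convert into a structural restriction on local intervals---e.g., forcing each local interval $L^{G}(\S)$ appearing in the link decomposition to again have a flag nested set complex, so that the inductive hypothesis applies cleanly.

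\textbf{Sanity check and calibration.} Before undertaking the general argument, I would verify that the proposed family of auxiliary polynomials recovers Bóna's real-rootedness of the second Eulerian polynomials in the case $(\L,\G)=(\Pi_{n+1},\G_{\min})$ via Theorem~\ref{thm:second-eulerian}, since $\N(\Pi_{n+1},\G_{\min})$ is flag and the vertex decomposition of Section~\ref{sec:vert-decomp} restricts, along the bijection with Stirling permutations, to a known recursion for $P_n(x)$. Matching the inductive interlacing to that recursion would both validate the framework and suggest the correct combinatorial index set for the auxiliary family in the general case.
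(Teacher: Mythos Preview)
The statement you are attempting to prove is \emph{Conjecture}~\ref{conj:flag-real-rooted}: the paper does not prove it, and in fact the authors explicitly remark that it ``may well admit counterexamples''. There is therefore no proof in the paper to compare your proposal against; any correct argument here would be a genuine new theorem.

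Your proposal is not such an argument, and you effectively say so yourself. In the paragraph labeled \textbf{Main obstacle} you observe that the special case $\G=\G_{\max}$ is already the open Athanasiadis--Kalampogia-Evangelinou conjecture; since $\N(\L,\G_{\max})$ is always flag, your strategy must resolve that conjecture in full, and nothing in the outline explains how flagness could be leveraged to do so. The ``interlacing family'' framework you invoke is a reasonable heuristic, but the crucial step---propagating compatibility through the recursion---is asserted rather than proved, and this is exactly where all known attempts on the $\G_{\max}$ case stall. A proof sketch that openly contains an unresolved open problem as a subcase is not a proof.

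There is also a more concrete technical gap in the inductive setup. You write that ``each polynomial on the right is the $h$-polynomial of a flag nested set complex of a strictly smaller built \emph{geometric} lattice''. This is false for the deletion term: the lattice $\L\setminus(G_{\max})$ produced by Lemma~\ref{lem:restriction-lattice} is in general \emph{not} geometric (see Figure~\ref{fig:successive-deletions}, where after one deletion the element $34$ is no longer a join of atoms). The paper only shows that $\L\setminus(G_{\max})$ is graded and admits an injective admissible map, which suffices for vertex decomposability but places the deletion outside the scope of your inductive hypothesis as stated.
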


Recall that a simplicial complex is \emph{flag} if all its minimal nonfaces have size at most $2$ (equivalently, if it is the clique complex of a graph). From the definitions, $\N(\L,\G_{\max})$ is flag for every $\L$, since it is an order complex. One also checks without difficulty that $\N(\Pi_n,\G_{\min})$ is flag. The connection between flagness and real-rootedness phenomena in combinatorics remains somewhat mysterious; see \cite[Section~4]{reiner-welker}. Although Conjecture~\ref{conj:flag-real-rooted} may well admit counterexamples (in particular given the scarcity of computational evidence and the difficulty of producing supporting data), we believe that flagness of $\N(\L,\G)$ is an essential feature if one hopes to strengthen the inequalities of Corollary~\ref{cor:inequalities}.

We also briefly mention a related problem. To any pair $(\L,\G)$, with $\L$ a geometric lattice, one can associate the \emph{Chow polynomial} \cite{eur-ferroni-matherne-pagaria-vecchi}, defined as the Hilbert--Poincar\'e series of the Chow ring of $(\L,\G)$ \cite{feichtner-yuzvinsky}. For certain choices of $\L$ and $\G$ this polynomial is known to be real-rooted (see, e.g., \cite{coron-ferroni-li,branden-vecchi2} and the references therein). It would be very interesting to clarify how Chow polynomials relate to the $h$-polynomials of nested set complexes, especially in connection with Conjecture~\ref{conj:flag-real-rooted}. In \cite{coron-ferroni-li} we show that, for certain geometric lattices, the real-rootedness conjectures for the Chow polynomial of $(\L,\G_{\max})$ and for the $h$-polynomial of $\N(\L,\G_{\max})$ can be investigated in parallel.

\begin{question}
    Assume $\L$ is a geometric lattice. Is there a relationship between the $h$-polynomial of $\N(\L,\G)$ and the Chow polynomial arising from the pair $(\L,\G)$?
\end{question}

\subsection{Two problems on Stirling permutations}

Let us denote by $Q_{n,k}$ the set of all Stirling permutations on $2n$ elements having exactly $k$ descents. By Corollary~\ref{coro:stirling-top-heavy} we know that the following inequalities hold true:
    \[\text{$|Q_{n,k}| \leqslant |Q_{n,n-k}|$ for each $k\leqslant \lfloor n/2\rfloor$ and $|Q_{n,0}| \leqslant |Q_{n,1}| \leqslant \cdots \leqslant |Q_{n,\lfloor n/2\rfloor}|$.} \]

It would be very interesting to have a proof of the preceding inequalities via explicit injections $Q_{n,k}\hookrightarrow Q_{n,n-k}$ and, similarly, $Q_{n,0} \hookrightarrow Q_{n,1} \hookrightarrow \cdots \hookrightarrow Q_{n,\lfloor n/2\rfloor}$.

\begin{problem}
    Find suitable injections $Q_{n,k}\hookrightarrow Q_{n,n-k}$ for $k\leqslant \lfloor n/2\rfloor$ and $Q_{n,k}\hookrightarrow Q_{n,k+1}$ for $0\leqslant k \leqslant \lfloor n/2\rfloor - 1$.
\end{problem}

Now we turn our attention to another open question. A \emph{signed partition} $\pi$ is a partition of the set $\{0,\pm 1,\ldots, \pm n\}$ such that:
\begin{itemize}
    \item $P\in \pi$ if and only if $(-P) \in \pi$. 
    \item If $\{i,-i\} \subseteq P\in \pi$ for some $1\leqslant i\leqslant n$, then $0\in P$.
\end{itemize}

In the above list, the notation $(-P)$ stands for $\{-i : i \in P\}$. The second condition says that the only block of the partition allowed to contain a number and its opposite is the one that contains $0$. The poset of all signed partitions under coarsening is called the \emph{type B partition lattice} and, following \cite[Section~4.2]{athanasiadis-kalampogia}, we denote it by $\Pi_n^B$. As a poset, $\Pi_n^B$ coincides with the lattice of flats of the type $B$ braid arrangement, i.e., the hyperplane arrangement in $\mathbb{C}^n$ defined by the equations $x_i\pm x_j = 0$ for $1\leqslant i\leqslant j \leqslant n$.

\begin{question}
    Is there a combinatorial interpretation of the $h$-vector of $\N(\Pi_n^B,\mathcal{G}_{\min})$ that parallels Theorem~\ref{thm:second-eulerian}?
\end{question}

The $h$-polynomial of $\N(\Pi_n^B,\mathcal{G}_{\max})$ has been studied in \cite[Section~4.2]{athanasiadis-kalampogia} where it is in fact proved to be real-rooted. We emphasize the fact that $\N(\Pi_n^B,\G_{\min})$ is flag (since $\Pi_n^B$ is a supersolvable geometric lattice, this follows from \cite[Proposition~2.26]{Coron2023Supersolvability}), and thus in particular we expect that the $h$-polynomial of this complex is real-rooted as well.

\subsection{Convex ear decompositions, and shellability}

Given our results on nested sets complexes $\N(\L, \G)$ for a geometric lattice $\L$ with arbitrary building set $\G$ to be both vertex decomposable (and thus shellable) and admit convex ear decomposition, it is natural to ask the following question.

\begin{question}
    For which geometric lattices $\L$ and which building sets $\G$, does the nested set complex $\N(\L, \G)$ admit a PS ear decomposition? 
\end{question}

In general, $\N(\L, \G)$ does not admit a PS ear decomposition. For example, the order complex of $\mathcal{L}(U_{3,4})$ only admits the convex ear decomposition shown in \Cref{fig:CED-U34} up to isomorphism, which is not a PS ear decomposition. 

On the other hand, it is unclear whether general convex ear decompositions are enough to guarantee the shellability of a complex. In the literature there are several examples of complexes that possess convex ear decompositions (see, for example, Schweig's contributions \cite{schweig1,schweig2} and the work of Woodroofe \cite{woodroofe}), but all of them are known to be shellable. 

\begin{problem}
    Find a non shellable simplicial complex $\Delta$ admitting a convex ear decomposition.
\end{problem}

While we are inclined to expect such examples to abound, we have not been able to construct one.

\bibliographystyle{amsalpha}
\bibliography{biblio}

\newcommand{\etalchar}[1]{$^{#1}$}
\providecommand{\bysame}{\leavevmode\hbox to3em{\hrulefill}\thinspace}
\providecommand{\MR}{\relax\ifhmode\unskip\space\fi MR }
% \MRhref is called by the amsart/book/proc definition of \MR.
\providecommand{\MRhref}[2]{%
  \href{http://www.ams.org/mathscinet-getitem?mr=#1}{#2}
}
\providecommand{\href}[2]{#2}
\begin{thebibliography}{BDBN{\etalchar{+}}26}

\bibitem[ACP22]{allcock2022tropical}
Daniel Allcock, Daniel Corey, and Sam Payne, \emph{Tropical moduli spaces as symmetric {$\Delta$}-complexes}, Bull. Lond. Math. Soc. \textbf{54} (2022), no.~1, 193--205. \MR{4396931}

\bibitem[ADH23]{ardila-denham-huh}
Federico Ardila, Graham Denham, and June Huh, \emph{Lagrangian geometry of matroids}, J. Amer. Math. Soc. \textbf{36} (2023), no.~3, 727--794. \MR{4583774}

\bibitem[ADKE24]{athanasiadis-douvropoulos-kalampogia}
Christos~A. Athanasiadis, Theo Douvropoulos, and Katerina Kalampogia-Evangelinou, \emph{Two classes of posets with real-rooted chain polynomials}, Electron. J. Combin. \textbf{31} (2024), no.~4, Paper No. 4.16, 22. \MR{4816497}

\bibitem[AF25]{athanasiadis-ferroni}
Christos~A. {Athanasiadis} and Luis {Ferroni}, \emph{{A convex ear decomposition of the augmented Bergman complex of a matroid}}, Ark. Math. (2025), to appear.

\bibitem[AKE23]{athanasiadis-kalampogia}
Christos~A. Athanasiadis and Katerina Kalampogia-Evangelinou, \emph{Chain enumeration, partition lattices and polynomials with only real roots}, Comb. Theory \textbf{3} (2023), no.~1, Paper No. 12, 21. \MR{4565299}

\bibitem[APP21]{adiprasito-papadakis-petrotou}
Karim {Adiprasito}, Stavros {Papadakis}, and Vasiliki {Petrotou}, \emph{{Anisotropy, biased pairings, and the Lefschetz property for pseudomanifolds and cycles}}, arXiv e-prints (2021), arXiv:2101.07245.

\bibitem[BDBN{\etalchar{+}}26]{backman-dorpalen-nathanson-partida-prime}
Spencer Backman, Galen Dorpalen-Barry, Anastasia Nathanson, Ethan Partida, and Noah Prime, \emph{Line shellings of geometric lattices}, 2026, to appear.

\bibitem[BDF22]{BDF_2022}
Christin Bibby, Graham Denham, and Eva~Maria Feichtner, \emph{A {L}eray model for the {O}rlik-{S}olomon algebra}, Int. Math. Res. Not. IMRN (2022), no.~24, 19105--19174. \MR{4523245}

\bibitem[BE09]{bjorner2009shape}
Anders Bj\"orner and Torsten Ekedahl, \emph{On the shape of {B}ruhat intervals}, Ann. of Math. (2) \textbf{170} (2009), no.~2, 799--817. \MR{2552108}

\bibitem[BHM{\etalchar{+}}20]{braden2020singular}
Tom Braden, June Huh, Jacob~P Matherne, Nicholas Proudfoot, and Botong Wang, \emph{Singular {H}odge theory for combinatorial geometries}, arXiv e-prints (2020), arXiv:2010.06088.

\bibitem[BHM{\etalchar{+}}22]{braden-huh-matherne-proudfoot-wang}
Tom Braden, June Huh, Jacob~P. Matherne, Nicholas Proudfoot, and Botong Wang, \emph{A semi-small decomposition of the {C}how ring of a matroid}, Adv. Math. \textbf{409} (2022), Paper No. 108646, 49. \MR{4477425}

\bibitem[Bj{\"o}80]{bjorner-el-shellable}
Anders Bj{\"o}rner, \emph{Shellable and {C}ohen-{M}acaulay partially ordered sets}, Trans. Amer. Math. Soc. \textbf{260} (1980), no.~1, 159--183. \MR{570784}

\bibitem[Bj{\"o}92]{bjorner}
\bysame, \emph{The homology and shellability of matroids and geometric lattices}, Matroid applications, Encyclopedia Math. Appl., vol.~40, Cambridge Univ. Press, Cambridge, 1992, pp.~226--283. \MR{1165544}

\bibitem[BKR{\etalchar{+}}22]{bullock-et-al}
Elisabeth Bullock, Aidan Kelley, Victor Reiner, Kevin Ren, Gahl Shemy, Dawei Shen, Brian Sun, and Zhichun~Joy Zhang, \emph{Topology of augmented {B}ergman complexes}, Electron. J. Combin. \textbf{29} (2022), no.~1, Paper No. 1.31, 19. \MR{4396472}

\bibitem[BL81]{billera-lee}
Louis~J. Billera and Carl~W. Lee, \emph{A proof of the sufficiency of {M}c{M}ullen's conditions for {$f$}-vectors of simplicial convex polytopes}, J. Combin. Theory Ser. A \textbf{31} (1981), no.~3, 237--255. \MR{635368}

\bibitem[B{\'o}n09]{bona}
Mikl{\'o}s B{\'o}na, \emph{Real zeros and normal distribution for statistics on {S}tirling permutations defined by {G}essel and {S}tanley}, SIAM J. Discrete Math. \textbf{23} (2008/09), no.~1, 401--406. \MR{2476838}

\bibitem[Br{\"a}15]{branden2015unimodality}
Petter Br{\"a}nd{\'e}n, \emph{Unimodality, log-concavity, real-rootedness and beyond}, Handbook of enumerative combinatorics, Discrete Math. Appl. (Boca Raton), CRC Press, Boca Raton, FL, 2015, pp.~437--483. \MR{3409348}

\bibitem[BV25]{branden-vecchi2}
Petter {Br{\"a}nd{\'e}n} and Lorenzo {Vecchi}, \emph{{Chow polynomials of\phantom{z}totally nonnegative matrices and posets}}, arXiv e-prints (2025), arXiv:2509.17852.

\bibitem[BW97]{BjornerWachs1997}
Anders Bj\"orner and Michelle~L. Wachs, \emph{Shellable nonpure complexes and posets. {II}}, Trans. Amer. Math. Soc. \textbf{349} (1997), no.~10, 3945--3975. \MR{1401765}

\bibitem[CFL25]{coron-ferroni-li}
Basile {Coron}, Luis {Ferroni}, and Shiyue {Li}, \emph{{Chow polynomials of rank-uniform labeled posets}}, arXiv e-prints (2025), arXiv:2511.13819.

\bibitem[Cha97]{chari}
Manoj~K. Chari, \emph{Two decompositions in topological combinatorics with applications to matroid complexes}, Trans. Amer. Math. Soc. \textbf{349} (1997), no.~10, 3925--3943. \MR{1422892}

\bibitem[Cor25a]{coron2024matroidsfeynmancategorieskoszul}
Basile Coron, \emph{Matroids, {F}eynman categories, and {K}oszul duality}, Duke Math. J. \textbf{174} (2025), no.~11, 2303--2381. \MR{4948093}

\bibitem[Cor25b]{Coron2023Supersolvability}
\bysame, \emph{Supersolvability of built lattices and {K}oszulness of generalized {C}how rings}, Compos. Math. \textbf{161} (2025), no.~6, 1284--1312. \MR{4954105}

\bibitem[DCP95]{de1995wonderful}
C.~De~Concini and C.~Procesi, \emph{Wonderful models of subspace arrangements}, Selecta Math. (N.S.) \textbf{1} (1995), no.~3, 459--494. \MR{1366622}

\bibitem[EFM{\etalchar{+}}25]{eur-ferroni-matherne-pagaria-vecchi}
Christopher {Eur}, Luis {Ferroni}, Jacob~P. {Matherne}, Roberto {Pagaria}, and Lorenzo {Vecchi}, \emph{{Building sets, Chow rings, and their Hilbert series}}, arXiv e-prints (2025), arXiv:2504.16776.

\bibitem[EPS20]{price-sokal}
Andrew Elvey~Price and Alan~D. Sokal, \emph{Phylogenetic trees, augmented perfect matchings, and a {T}hron-type continued fraction ({T}-fraction) for the {W}ard polynomials}, Electron. J. Combin. \textbf{27} (2020), no.~4, Paper No. 4.6, 36. \MR{4245181}

\bibitem[Fei06]{feichtner-braid}
Eva~Maria Feichtner, \emph{Complexes of trees and nested set complexes}, Pacific J. Math. \textbf{227} (2006), no.~2, 271--286. \MR{2263017}

\bibitem[FK04]{feichtner-kozlov}
Eva-Maria Feichtner and Dmitry~N. Kozlov, \emph{Incidence combinatorics of resolutions}, Selecta Math. (N.S.) \textbf{10} (2004), no.~1, 37--60. \MR{2061222}

\bibitem[FM94]{fulton1994compactification}
William Fulton and Robert MacPherson, \emph{A compactification of configuration spaces}, Ann. of Math. (2) \textbf{139} (1994), no.~1, 183--225. \MR{1259368}

\bibitem[FM05]{feichtner-muller}
Eva~Maria Feichtner and Irene M\"{u}ller, \emph{On the topology of nested set complexes}, Proc. Amer. Math. Soc. \textbf{133} (2005), no.~4, 999--1006. \MR{2117200}

\bibitem[FS05]{feichtner2004matroid}
Eva~Maria Feichtner and Bernd Sturmfels, \emph{Matroid polytopes, nested sets and {B}ergman fans}, Port. Math. (N.S.) \textbf{62} (2005), no.~4, 437--468. \MR{2191630}

\bibitem[FY04]{feichtner-yuzvinsky}
Eva~Maria Feichtner and Sergey Yuzvinsky, \emph{Chow rings of toric varieties defined by atomic lattices}, Invent. Math. \textbf{155} (2004), no.~3, 515--536. \MR{2038195}

\bibitem[GS78]{gessel-stanley}
Ira Gessel and Richard~P. Stanley, \emph{Stirling polynomials}, J. Combinatorial Theory Ser. A \textbf{24} (1978), no.~1, 24--33. \MR{462961}

\bibitem[Jef23]{amzi-jeffs}
R.~Amzi Jeffs, \emph{Decompositions of augmented {B}ergman complexes}, Electron. J. Combin. \textbf{30} (2023), no.~1, Paper No. 1.6, 10. \MR{4533712}

\bibitem[Kal02]{kalai}
Gil Kalai, \emph{Algebraic shifting}, Computational commutative algebra and combinatorics ({O}saka, 1999), Adv. Stud. Pure Math., vol.~33, Math. Soc. Japan, Tokyo, 2002, pp.~121--163. \MR{1890098}

\bibitem[LS25]{larson-stapledon}
Matt {Larson} and Alan {Stapledon}, \emph{{Complementary vectors of simplicial complexes}}, arXiv e-prints (2025), arXiv:2504.20264.

\bibitem[Mik07]{mikhalkin2007moduli}
Grigory Mikhalkin, \emph{Moduli spaces of rational tropical curves}, Proceedings of {G}\"okova {G}eometry-{T}opology {C}onference 2006, G\"okova Geometry/Topology Conference (GGT), G\"okova, 2007, pp.~39--51. \MR{2404949}

\bibitem[MM23]{mastroeni-mccullough}
Matthew Mastroeni and Jason McCullough, \emph{Chow rings of matroids are {K}oszul}, Math. Ann. \textbf{387} (2023), no.~3-4, 1819--1851. \MR{4657438}

\bibitem[NS04]{nyman-swartz}
Kathryn Nyman and Ed~Swartz, \emph{Inequalities for the {$h$}-vectors and flag {$h$}-vectors of geometric lattices}, Discrete Comput. Geom. \textbf{32} (2004), no.~4, 533--548. \MR{2096747}

\bibitem[Oxl11]{oxley}
James Oxley, \emph{Matroid theory}, second ed., Oxford Graduate Texts in Mathematics, vol.~21, Oxford University Press, Oxford, 2011. \MR{2849819}

\bibitem[PB80]{provan-billera}
J.~Scott Provan and Louis~J. Billera, \emph{Decompositions of simplicial complexes related to diameters of convex polyhedra}, Math. Oper. Res. \textbf{5} (1980), no.~4, 576--594. \MR{593648}

\bibitem[PP23]{pagaria-pezzoli}
Roberto Pagaria and Gian~Marco Pezzoli, \emph{Hodge theory for polymatroids}, Int. Math. Res. Not. IMRN (2023), no.~23, 20118--20168. \MR{4675068}

\bibitem[Rob04]{robinson2004partition}
Alan Robinson, \emph{Partition complexes, duality and integral tree representations}, Algebr. Geom. Topol. \textbf{4} (2004), 943--960. \MR{2100687}

\bibitem[RW05]{reiner-welker}
Victor Reiner and Volkmar Welker, \emph{On the {C}harney-{D}avis and {N}eggers-{S}tanley conjectures}, J. Combin. Theory Ser. A \textbf{109} (2005), no.~2, 247--280. \MR{2121026}

\bibitem[Sch09]{schweig1}
Jay Schweig, \emph{A convex-ear decomposition for rank-selected subposets of supersolvable lattices}, SIAM J. Discrete Math. \textbf{23} (2009), no.~2, 1009--1022. \MR{2519941}

\bibitem[Sch11]{schweig2}
\bysame, \emph{Convex-ear decompositions and the flag h-vector}, Electron. J. Combin. \textbf{18} (2011), no.~1, Paper 4, 14. \MR{2770109}

\bibitem[Sta74]{stanley-74}
Richard~P. Stanley, \emph{Finite lattices and {J}ordan-{H}\"older sets}, Algebra Universalis \textbf{4} (1974), 361--371. \MR{354473}

\bibitem[Sta80]{stanley-g-theorem}
\bysame, \emph{The number of faces of a simplicial convex polytope}, Adv. in Math. \textbf{35} (1980), no.~3, 236--238. \MR{563925}

\bibitem[Sta89]{stanley-unimodality}
\bysame, \emph{Log-concave and unimodal sequences in algebra, combinatorics, and geometry}, Graph theory and its applications: {E}ast and {W}est ({J}inan, 1986), Ann. New York Acad. Sci., vol. 576, New York Acad. Sci., New York, 1989, pp.~500--535. \MR{1110850}

\bibitem[Sta96]{stanley-96}
\bysame, \emph{Combinatorics and commutative algebra}, second ed., Progress in Mathematics, vol.~41, Birkh\"auser Boston, Inc., Boston, MA, 1996. \MR{1453579}

\bibitem[Sta12]{stanley-ec1}
\bysame, \emph{Enumerative combinatorics. {V}olume 1}, second ed., Cambridge Studies in Advanced Mathematics, vol.~49, Cambridge University Press, Cambridge, 2012. \MR{2868112}

\bibitem[Swa06]{swartz}
Ed~Swartz, \emph{{$g$}-elements, finite buildings and higher {C}ohen-{M}acaulay connectivity}, J. Combin. Theory Ser. A \textbf{113} (2006), no.~7, 1305--1320. \MR{2259062}

\bibitem[TZ98]{trappmann-ziegler}
Henryk Trappmann and G\"unter~M. Ziegler, \emph{Shellability of complexes of trees}, J. Combin. Theory Ser. A \textbf{82} (1998), no.~2, 168--178. \MR{1620865}

\bibitem[Vog90]{vogtmann1990local}
Karen Vogtmann, \emph{Local structure of some {$\textrm{Out}(F_n)$}-complexes}, Proceedings of the Edinburgh Mathematical Society \textbf{33} (1990), no.~3, 367--379.

\bibitem[Wac07]{wachs2007poset}
Michelle~L. Wachs, \emph{Poset topology: tools and applications}, Geometric combinatorics, IAS/Park City Math. Ser., vol.~13, Amer. Math. Soc., Providence, RI, 2007, pp.~497--615. \MR{2383132}

\bibitem[Woo09]{woodroofe}
Russ Woodroofe, \emph{Cubical convex ear decompositions}, Electron. J. Combin. \textbf{16} (2009), no.~2, Research Paper 17, 33. \MR{2515780}

\end{thebibliography}

\end{document}